\title{Pour-El's Landscape}
\author{Taishi Kurahashi\footnote{Email: kurahashi@people.kobe-u.ac.jp}
\footnote{Graduate School of System Informatics, Kobe University, Japan.} \ 
and Albert Visser\footnote{Email: a.visser@uu.nl}\ \footnote{Philosophy, Faculty of Humanities, Utrecht University, The Netherlands} 
}
\date{}
\theoremstyle{plain}
\newtheorem{thm}{Theorem}[section]
\newtheorem{lem}[thm]{Lemma}
\newtheorem{prop}[thm]{Proposition}
\newtheorem{cor}[thm]{Corollary}
\theoremstyle{definition}
\newtheorem{defn}[thm]{Definition}
\newtheorem{ex}[thm]{Example}
\newtheorem{rem}[thm]{Remark}
\newtheorem{prob}[thm]{Problem}
\renewcommand{\phi}{\varphi}
\renewcommand{\Theta}{\varTheta}
\renewcommand{\Phi}{\varPhi}
\renewcommand{\Psi}{\varPsi}
\renewcommand{\Xi}{\varXi}
\renewcommand{\Omega}{\varOmega}
\renewcommand{\Gamma}{\varGamma}
\newcommand{\qedright}{\belowdisplayskip=-12pt}
\newcommand{\PA}{\mathsf{PA}}
\newcommand{\Prf}{\mathsf{Prf}}
\newcommand{\Con}{\mathsf{Con}}
\newcommand{\gn}[1]{\ulcorner{#1}\urcorner}
\newcommand{\num}[1]{\underline{#1}}
\newcommand{\mc}[1]{\mathcal {#1}}
\newcommand{\RQ}{\mathsf{Q}}
\newcommand{\RR}{\mathsf{R}}
\newcommand{\verz}[1]{\{ #1 \}}
\renewcommand{\iff}{\leftrightarrow}
\newcommand{\mf}[1]{{\mathfrak {#1}}}
 \newcommand{\To}{\Rightarrow}
  \newcommand{\tupel}[1]{{\langle #1 \rangle}}
   \newcommand{\adj}[2]{#1 \cup \{#2\}}
   \newcommand{\jump}{\mathrel{\mbox{\textcolor{gray}{$\blacktriangleright$}}}}
\newcommand{\jumpb}{\mathrel{\mbox{\textcolor{gray}{$\blacktriangleleft$}}}}
\newcommand{\domi}{\mathrel{\Yleft}}
\newcommand{\sline}{\raise-0.3ex\hbox{$\hbox{--}\kern-0.84ex\raise0.45ex\hbox{$\hbox{\scalebox{0.3}{\bf /}}\kern-0.37ex\hbox{\scalebox{0.3}{\bf /}}$}$}}
\newcommand{\slinei}{\raise-0.3ex\hbox{$\hbox{--}\kern-0.84ex\raise0.45ex\hbox{$\hbox{\scalebox{0.3}
{\bf \textbackslash}}\kern-0.37ex\hbox{\scalebox{0.3}{\bf \textbackslash}}$}$}}
\newcommand{\jumpneq}{\mathrel{\jump_{\hspace*{-0.235cm}{}_{\kern0.08ex \sline}}\hspace*{0.09cm}}}
\newcommand{\jumpbneq}{\mathrel{\jumpb_{\hspace*{-0.235cm}{}_{\kern0.08ex \slinei}}\hspace*{0.09cm}}}
\newcommand{\dashvneq}{\mathrel{\dashv_{\hspace*{-0.235cm}{}_{\kern0.2ex \slinei}}\hspace*{0.09cm}}}
\newcommand{\vdashneq}{\mathrel{\vdash_{\hspace*{-0.235cm}{}_{\kern0.3ex \sline}}\hspace*{0.09cm}}}
\newcommand{\lhdnneq}{\mathrel{\lhd_{\hspace*{-0.27cm}{}_{\kern0.2ex \slinei}}\hspace*{0.09cm}}}
\newcommand{\rhdnneq}{\mathrel{\rhd_{\hspace*{-0.27cm}{}_{\kern0.3ex \sline}}\hspace*{0.09cm}}}
\definecolor{uured}{cmyk}{0.2,1,0.9,0.1}
\definecolor{uublue}  {cmyk}{0.9,0.55,0,0}
\definecolor{uugreen}{cmyk}{1,0,0.75,0}
\definecolor{bazaar}{rgb}{0.6, 0.47, 0.48}
\DeclareMathOperator{\possible}{\text{\tikz[scale=.6ex/1cm,baseline=-.6ex,rotate=45,line width=.1ex]{
                            \draw (-1,-1) rectangle (1,1);}}}
\DeclareMathOperator{\necessary}{\text{\tikz[scale=.6ex/1cm,baseline=-.6ex,line width=.1ex]{
                            \draw (-1,-1) rectangle (1,1);}}}
\newcommand{\apr}{{\vartriangle}}
\newcommand{\aco}{{\triangledown}}
\newcommand{\opr}{\necessary}
\newcommand{\oco}{\possible}
\newcommand{\ce}{c.e.}
\newcommand{\compu}{computable}
\newcommand{\compy}{computably}
\newcommand{\compab}{computability}
\begin{document}

\maketitle

\begin{abstract}
We study the effective versions of several notions related to incompleteness, undecidability and inseparability along the lines of Pour-El's insights. 
Firstly, we strengthen Pour-El's theorem on the equivalence between effective essential incompleteness and effective inseparability. 
Secondly, we compare the notions obtained by restricting that of effective essential incompleteness to intensional finite extensions and extensional finite extensions.
Finally, we study the combination of effectiveness and hereditariness, and prove an adapted version of Pour-El's result for this combination. 
\end{abstract}

\section{Introduction}
Reflection on the meaning of incompleteness and undecidability results gave rise to notions like \emph{essential undecidability} of theories
and \emph{\compu\ inseparability} of theories: a consistent c.e.~theory $U$ is essentially undecidable iff all its consistent extensions $V$ (in the same language)
are undecidable, and  a consistent \ce~theory $U$ is \compy\ inseparable iff its theorems and its refutable sentences are \compy\ inseparable.
We note that these notions have different flavours: \emph{essential undecidability} looks at the relation of the given theory with other theories and
\emph{\compu\ inseparability} looks at the relation of the theory with \ce~sets. 

Such notions were studied in the period 1950--1970, see e.g.~\cite{Smullyan61, Smullyan93}. 
Their various relations and non-relations were established. See the schema at the end of Section~\ref{noneffsmurf}. 

Marian Boykan Pour-El, in her ground-breaking paper~\cite{pour:effe68}, made an interesting discovery. Where there are examples of essentially incomplete
theories that are not \compy\ inseparable, the effective versions of these notions coincide. The present paper is a study of results along
the lines of Pour-El's insight. We study effective versions of notions connected to incompleteness and undecidability and establish their interrelationships.
See the schema at the end of Section~\ref{heredity} for an overview of our results.

In the usual statements of incompleteness results, there is often a restriction to some specific formula class.
For example: for all \ce\ extensions $U$ of the Tarski-Mostowski-Robinson theory {\sf R}, we can effectively find a $\Sigma^0_1$-sentence
$\sigma$ that is independent of $U$ from an index of $U$. The Pour-El style results in this
paper will reflect the possibility of such restrictions: we will add a parameter for the formula class from 
which the witnesses of e.g. incompleteness or creativity may be chosen. 

Effective versions of incompleteness and undecidability results have unavoidably an intensional component. For example, a theory $U$
is \emph{effectively essentially incomplete} iff there is a partial \compu\ function $\Phi$ such that, for all indices $i$, if
the \ce~set ${\sf W}_i$ is a consistent extension of $U$, then $\Phi(i)$ converges to a sentence that is independent of ${\sf W}_i$.
We see that the function that provides the independent sentences operates on presentations rather than directly on the extensions of the
given theory itself. Thus, our paper is as much a study of the consequences of intensionality as it is of effectiveness.
In the case of finite extensions, we can operate more extensionally, since we can consider these as given not by an index
but rather by a sentence. This fact enables us, in the special case of finite extensions, to compare the intensional and the
extensional. In Section~\ref{sec_ef}, we study the extensional finite case.

\subsection{Overview of the Paper}
In Section~\ref{notions}, we introduce the basic notions. We present an analysis of what the effectivisation of a notion is in
Section~\ref{whatsmurf}. Section~\ref{pour-el-sec} gives our presentation and extension of Pour-El's work. Then, in Section~\ref{sec_ef},
we study the extensional case for finite extensions. Section~\ref{heredity} is concerned with the combination of effectiveness and hereditariness. 
Among other things, we prove that for any consistent \ce~theory, effective essential hereditary creativity and strong effective inseparability are equivalent. 
The section is a sequel to the paper~\cite{Vis22}, where the non-effective case of hereditariness is studied.

Apart from reading the paper from A to Z, there are several other paths the reader may beneficially follow. 
Section~\ref{notions} is more or less obligatory in order to understand the rest, 
but e.g. Section~\ref{nijveresmurf} can be read lightly to return to
when the relevant notions are needed. Of Section~\ref{pour-el-sec}, the presentation of Pour-El's original result in
Section~\ref{mainsmurf} should not be skipped, but
the reader could cherry-pick from the rest. Then, the reader can choose between Section~\ref{sec_ef} and Section~\ref{heredity}.

\section{Notions and Basic Facts}\label{notions}

In this section, we introduce the various notions that we employ in this paper and present some basic facts.

\subsection{Theories}
A \emph{theory} is given as a set of axioms in a given signature. We take the signature to be part of the
data of the theory. Note that we do not take a formula representing the axiom set or an index of
the axiom set as part of the data. The same theory can have different enumerations. Moreover, these
enumerations are enumerations of the axiom set and not of the theorems.

We write $U_{\mf p}$ for the set of theorems of $U$ and $U_{\mf r}$ for the $U$-refutable sentences or
anti-theorems of $U$, i.e. $U_{\mf r} = \verz{\phi \mid U \vdash \neg\, \phi}$.

We will also consider \emph{mono-consequence}: $U \vdash_{\mf m} \phi$ iff there is a $\psi\in U$ such that
$\psi \vdash \phi$. We have the corresponding notion of mono-consistency, which was developed by Lindstr\"om (cf.~\cite{Lin03}).
We write $U_{\mf m}$ for the set of mono-theorems of $U$ and $U_{\mf n}$ for the set of mono-refutable sentences.

Strictly speaking, there is no disjoint notion of \emph{mono-theory}. A mono-theory is just a theory. However, sometimes we will
still use the word to indicate that we intend to use the given set of sentences of the given signature with the notion of mono-consequence.

The notion of mono-theory plays a role in Theorem~\ref{effehr3}. Below we show the notion of mono-theory is connected to the idea of \compu\ sequence of \ce~theories.
There are heuristic differences between the notions. Sometimes it is better to think in terms of the `flatter' notion of
mono-theory, sometimes it is pleasant to visualise a sequence of theories.

We define $\widehat U$ as the set of all non-empty finite conjunctions of elements of $U$. We have the following convenient insight. 

\begin{thm}
    We have: $U\vdash \phi$ iff $\widehat U \vdash_{\mf m} \phi$.
    As a consequence, $U_{\mf p} = \widehat U_{\mf m}$ and $U_{\mf r} = \widehat U_{\mf n}$.
\end{thm}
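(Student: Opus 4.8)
The plan is to prove the biconditional $U\vdash \phi \iff \widehat U \vdash_{\mf m}\phi$ directly, handling the two directions separately, and then read off the two consequences about $U_{\mf p}$ and $U_{\mf r}$ as essentially immediate corollaries. The whole argument rests on three standard ingredients of the background logic: finiteness of derivations, conjunction introduction/elimination, and cut (transitivity of $\vdash$).

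For the left-to-right direction, I would start from the finiteness of proofs: if $U\vdash \phi$, then any derivation witnessing this uses only finitely many axioms, so there is a finite subset $\verz{\psi_1,\dots,\psi_n}\subseteq U$ with $\verz{\psi_1,\dots,\psi_n}\vdash \phi$. Assuming $U\neq\emptyset$ we may take $n\geq 1$ and set $\chi := \psi_1\wedge\cdots\wedge\psi_n$. By definition $\chi\in\widehat U$, and repeated conjunction elimination gives $\chi\vdash\psi_i$ for each $i$, whence $\chi\vdash\phi$ by cut. Thus $\chi$ witnesses $\widehat U \vdash_{\mf m}\phi$. For the right-to-left direction, suppose $\widehat U\vdash_{\mf m}\phi$, so there is some $\chi\in\widehat U$ with $\chi\vdash\phi$. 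Writing $\chi = \psi_1\wedge\cdots\wedge\psi_n$ with each $\psi_i\in U$, we have $U\vdash\psi_i$ for every $i$, hence $U\vdash\chi$ by conjunction introduction; combining this with $\chi\vdash\phi$ via cut yields $U\vdash\phi$.

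The consequences then follow with no further work. The identity $U_{\mf p} = \widehat U_{\mf m}$ is merely the biconditional rephrased in the theorem-set notation, since $U_{\mf p} = \verz{\phi \mid U\vdash\phi}$ and $\widehat U_{\mf m} = \verz{\phi \mid \widehat U \vdash_{\mf m}\phi}$. For $U_{\mf r} = \widehat U_{\mf n}$ I would apply the biconditional to $\neg\,\phi$ in place of $\phi$: since $\phi\in\widehat U_{\mf n}$ iff $\widehat U\vdash_{\mf m}\neg\,\phi$ iff $U\vdash\neg\,\phi$ iff $\phi\in U_{\mf r}$, the two sets coincide.

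I do not expect a genuine obstacle: the mathematical content is just the interaction of the three ingredients above. The only point deserving care is the non-emptiness convention built into $\widehat U$. Because $\widehat U$ is defined as the set of \emph{non-empty} finite conjunctions of elements of $U$, the left-to-right argument presupposes $U\neq\emptyset$; otherwise a logically valid $\phi$ would be a theorem of the empty theory while $\widehat U$ would contain no witness for $\vdash_{\mf m}$. For all theories of interest here this holds automatically, but it is worth flagging explicitly.
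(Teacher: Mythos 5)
Your proof is correct: the paper states this theorem without any proof at all (it is offered as a ``convenient insight''), and your argument --- finiteness of derivations, conjunction introduction/elimination, and cut --- is precisely the routine reasoning the paper leaves implicit, in both directions and in the two corollaries. Your flag about non-emptiness is a genuine, if minor, subtlety: since $\widehat U$ contains only non-empty conjunctions, for $U = \emptyset$ the left-to-right direction fails on logically valid $\phi$ (the empty theory proves them, but $\widehat U$ supplies no mono-witness), so the statement tacitly presupposes $U \neq \emptyset$, a convention the paper glosses over.
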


We note that we have $V \dashv U$ iff $V \dashv_{\mf m} \widehat U$. So, $\widehat{(\cdot)}$ is the right adjoint of the projection
functor of theories (of a given signature) with $\dashv$ into theories (of the given signature) with $\dashv_{\mf m}$.
We also note that, of course, $(\cdot)_{\mf p}$  gives us an isomorphic functor. The advantage of  $\widehat{(\cdot)}$ is the
fact that it does not raise the complexity of the given set for most measures of complexity.

A \emph{\compu\ sequence $(T_i)_{i\in \omega}$ of \ce~theories} is given by
a \ce~relation $\mc T(i,\phi)$. Here, of course, $T_i := \verz{\phi \mid \mc T(i,\phi)}$.
We demand that all $T_i$ are in the same signature. We take this signature as
part of the data of the sequence. A sequence of theories is consistent if each of its
theories is.

Consider a \compu\ sequence of \ce~theories, given by $\mc T(i,\phi)$.
We define $\mc T^{\mf m} := \bigcup_{i\in \omega} \widehat T_i$. Clearly,
$\mc T^{\mf m}$ is a \ce~mono-theory.
It is easy to see that $\mc T^{\mf m} \vdash_{\mf m}\phi$ iff $T_i \vdash \phi$, for some $i$.

Conversely, given a \ce~mono-theory $U$ we can associate 
a \compu\ sequence $(T_i)_{i\in \omega}$ of \ce~theories as follows.
Enumerate $U$ in stages and, if, at stage $i$, a sentence $\phi$ is enumerated,
put $\phi$ in $T_i$. Clearly, $U \vdash_{\mf m}\phi$ iff $T_i \vdash \phi$, for some $i$.

Since metamathematical results on sequences of theories are mostly concerned with the relation
$\exists i\in \omega\;T_i \vdash \phi$, we can usually replace sequences of theories given by $\mc T$ by 
mono-theories $U$ and study $U \vdash_{\mf m}\phi$.

\subsection{Theory-Extension}\label{nijveresmurf}
We may define various notions of \emph{theory-extension}. The basic notion is simply $U\subseteq V$: the $V$-axioms extend the $U$-axioms.
Here the $V$-language may extend the $U$-language. We have three `dimensions' of variation. 
\begin{enumerate}[i.]
    \item We can put restrictions on the $V$-language.
We consider two possibilities. We use  a superscript $\mf s$, for `same' or for `signature', to indicate that the $U$- and the
$V$-language coincide. We use a superscript $\mf c$ to indicate that the $V$-language extends the $U$-language by at most finitely
many constants.
\item 
We do not compare the axiom sets but appropriate closures of the axioms sets. When we compare the theorems, we indicate this by a subscript $\mf p$.
We can also compare the mono-theorems. We indicate this by a subscript $\mf m$.
\item 
We may put constraints on the cardinality of the extension. We use the subscript $\mf f$ for finite extensions.
\end{enumerate}

So we will use, e.g. $U\subseteq^{\mf s}_{\mf {f}}V$, for: $V$ is a finite extension of $U$ in the same language.
If we use, e.g., $U\subseteq^{\mf s}_{\mf {pf}}V$, this is of course intended to mean that the theorems of $V$ are theorems
of a finite extension of $U$. We will use $U\dashv V$ for $U \subseteq^{\mf s}_{\mf p} V$ and $U\dashv_{\mf m} V$ for 
$U \subseteq^{\mf s}_{\mf m} V$.

We note that if $U_{\mf p} = U'_{\mf p}$ and 
$V_{\mf p} = V'_{\mf p}$, then $U\subseteq^{\mf s}_{\mf {pf}}V$ iff
$U'\subseteq^{\mf s}_{\mf {pf}}V'$. 

We will be looking at mono-extensions of non-mono theories. For this case the following notion of extension is a relevant one.
\begin{itemize}
\item
 $U \Cup V := \verz{\phi \wedge \psi \mid \phi\in U \text{ and } \psi\in V}$.
 \item
$U \Subset V$ iff $U\Cup V \subseteq^{\mf s}_{\mf m} V$.
\end{itemize}

Let $\mc X$ and $\mc Y$ be disjoint \ce~sets. 
Two sets $\mc Z$ and $\mc W$ \textit{weakly biseparate} $\mc X$ and $\mc Y$ iff $\mc X \subseteq \mc Z$, $\mc Y \subseteq \mc W$, $\mc Z \cap \mc Y = \emptyset$, and $\mc W \cap \mc X = \emptyset$. 
We say that $\mc Z$ and $\mc W$ \textit{biseparate} $\mc X$ and $\mc Y$ iff they weakly biseparate $\mc X$ and $\mc Y$ and $\mc Z \cap \mc W = \emptyset$. 
We will not use the following theorem later, but we state it for the sake of understanding.

\begin{thm}
\begin{enumerate}[a.]
\item
If $U \Subset V$, then $\widehat U \subseteq^{\mf s} U_{\mf p} \subseteq^{\mf s}_{\mf m} U_{\mf p} \Cup V \subseteq^{\mf s}_{\mf m} V$. 
\item
If $U \Subset V$ and $V$ is mono-consistent, then $V_{\mf m}$ and $V_{\mf n}$ weakly biseparate $U_{\sf p}$ and $U_{\mf r}$.
\item 
$U\Subset U$ iff $U_{\mf m}  = U_{\mf p}$. 
\end{enumerate}
\end{thm}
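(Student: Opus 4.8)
The plan is to extract a single working reformulation of $\Subset$ and then read all three parts off it. First I would unfold the definition: $U\Subset V$ means $(U\Cup V)_{\mf m}\subseteq V_{\mf m}$, and since $\phi\wedge\psi\vdash\phi\wedge\psi$ on one side and $\theta\vdash\phi\wedge\psi\vdash\chi$ on the other, this is equivalent to the pointwise statement
\[
U\Subset V \text{ holds iff for all } \phi\in U \text{ and } \psi\in V \text{ there is } \theta\in V \text{ with } \theta\vdash\phi\wedge\psi.
\]
The one genuinely load-bearing lemma is then an \emph{absorption step}: $U\Subset V$ implies $\widehat U\Subset V$, i.e. the same property holds with the first argument ranging over finite conjunctions of $U$-axioms. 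I would prove this by induction on the number of conjuncts, applying the reformulation one conjunct at a time: given $\theta_k\in V$ with $\theta_k\vdash\phi_1\wedge\dots\wedge\phi_k\wedge\psi$, feed $\phi_{k+1}\in U$ and $\theta_k\in V$ into $U\Subset V$ to obtain $\theta_{k+1}\in V$ with $\theta_{k+1}\vdash\phi_{k+1}\wedge\theta_k$, hence $\theta_{k+1}\vdash\phi_1\wedge\dots\wedge\phi_{k+1}\wedge\psi$.

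For part (a) I would verify the three links of the chain separately. The leftmost, $\widehat U\subseteq^{\mf s}U_{\mf p}$, is immediate since a conjunction of $U$-axioms is a $U$-theorem. For $U_{\mf p}\subseteq^{\mf s}_{\mf m}U_{\mf p}\Cup V$, an element of $(U_{\mf p})_{\mf m}$ is mono-derivable from some $\alpha\in U_{\mf p}$; picking any $\beta\in V$, the conjunction $\alpha\wedge\beta$ lies in $U_{\mf p}\Cup V$ and proves $\alpha$, so it mono-derives the same element, using only that $V$ is non-empty. The rightmost link $U_{\mf p}\Cup V\subseteq^{\mf s}_{\mf m}V$ is where the work sits: given $\chi\in(U_{\mf p}\Cup V)_{\mf m}$, witnessed by $\alpha\wedge\beta\vdash\chi$ with $\alpha\in U_{\mf p}$ and $\beta\in V$, the first theorem of the excerpt ($U\vdash\alpha$ iff $\widehat U\vdash_{\mf m}\alpha$, hence $U_{\mf p}=\widehat U_{\mf m}$) yields $\phi\in\widehat U$ with $\phi\vdash\alpha$; then $\phi\wedge\beta\vdash\chi$, and the absorption lemma gives $\theta\in V$ with $\theta\vdash\phi\wedge\beta\vdash\chi$, so $\chi\in V_{\mf m}$.

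Part (b) I would derive from (a). Transitivity of $\subseteq^{\mf s}_{\mf m}$ along the chain gives $(U_{\mf p})_{\mf m}\subseteq V_{\mf m}$, and since $\alpha\vdash\alpha$ we have $U_{\mf p}\subseteq(U_{\mf p})_{\mf m}$, hence $U_{\mf p}\subseteq V_{\mf m}$; applying this to $\neg\phi$ converts it into $U_{\mf r}\subseteq V_{\mf n}$. These are the two inclusions required for weak biseparation. The two disjointness clauses then follow by combining these inclusions with mono-consistency of $V$ (which I read as $V_{\mf m}\cap V_{\mf n}=\emptyset$): any $\phi\in V_{\mf m}\cap U_{\mf r}$ would, via $U_{\mf r}\subseteq V_{\mf n}$, also lie in $V_{\mf n}$, and any $\phi\in V_{\mf n}\cap U_{\mf p}$ would, via $U_{\mf p}\subseteq V_{\mf m}$, also lie in $V_{\mf m}$; both contradict mono-consistency. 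For part (c), I would first note $U_{\mf m}\subseteq U_{\mf p}$ always holds (a mono-theorem is a theorem), so the claim reduces to ``$U_{\mf p}\subseteq U_{\mf m}$ iff $U\Subset U$''. The forward direction is the same inductive combination as in (a), now with $V=U$: repeated use of $U\Subset U$ collapses a finite set of axioms proving a given $\chi\in U_{\mf p}$ into a single $\theta\in U$ with $\theta\vdash\chi$. The backward direction is immediate, since for $\phi,\psi\in U$ we have $\phi\wedge\psi\in U_{\mf p}=U_{\mf m}$, which is exactly the witness demanded by the reformulation of $U\Subset U$.

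The main obstacle is conceptual rather than computational: mono-consequence admits no conjunction introduction, so one cannot pass from $\psi_1\vdash\chi_1$ and $\psi_2\vdash\chi_2$ to a single mono-premise proving $\chi_1\wedge\chi_2$. The absorption lemma is precisely the device that recovers a controlled, one-conjunct-at-a-time surrogate for this closure inside $V$, and the first theorem of the excerpt is what lets me replace an arbitrary $\alpha\in U_{\mf p}$ by a single $\phi\in\widehat U$ lying below it. I would also flag the degenerate case $V=\emptyset$, where $U\Subset V$ holds vacuously while the chain in (a) can fail; I would exclude it by the standing assumption that the theories in play are non-empty.
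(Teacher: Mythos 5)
Your parts (a) and (c) are correct and essentially coincide with the paper's own proof: your ``absorption lemma'' is exactly the induction the paper runs to establish $U_{\mf p} \Cup V \subseteq^{\mf s}_{\mf m} V$, and your observation about empty theories is a fair remark on a degenerate case the paper glosses over. The genuine problem is in part (b), in the disjointness clauses. You read mono-consistency of $V$ as $V_{\mf m} \cap V_{\mf n} = \emptyset$. That reading is wrong: mono-consistency (Lindstr\"om's notion, and the one the paper's own argument invokes) means $V \nvdash_{\mf m} \bot$, i.e.\ no \emph{single} element of $V$ is inconsistent. Because mono-consequence has no conjunction introduction --- the very obstacle you yourself flag --- this is strictly weaker than $V_{\mf m} \cap V_{\mf n} = \emptyset$. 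Concretely, $V = \{p, \neg\, p\}$ is mono-consistent, yet $p \in V_{\mf m} \cap V_{\mf n}$; and taking $U = \{p \vee \neg\, p\}$ one checks $U \Subset V$, so this situation really arises under the hypotheses of (b). Hence your step ``$\phi \in V_{\mf m} \cap V_{\mf n}$ contradicts mono-consistency'' fails, and what you prove is (b) under a strictly stronger hypothesis than the theorem assumes.

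The repair is to use the $\Subset$-hypothesis once more, rather than only the two inclusions: if $\xi \in U_{\mf p} \cap V_{\mf n}$, pick $\psi \in V$ with $\psi \vdash \neg\, \xi$; since $\xi \wedge \psi \in U_{\mf p} \Cup V$ and (a) gives $U_{\mf p} \Cup V \subseteq^{\mf s}_{\mf m} V$, there is a single $\rho \in V$ with $\rho \vdash \xi \wedge \psi$, hence $\rho \vdash \xi \wedge \neg\, \xi$, so $\rho$ is an inconsistent element of $V$ --- and \emph{that} contradicts mono-consistency. The case $\xi \in U_{\mf r} \cap V_{\mf m}$ is symmetric. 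This is exactly how the paper argues. Note also that weak biseparation never demands $V_{\mf m} \cap V_{\mf n} = \emptyset$ (that extra clause is what upgrades it to biseparation), which is why the theorem can hold even though your intermediate claim is false.
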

\begin{proof}
(a). Suppose $U \Subset V$. 
The inclusions $\widehat U \subseteq^{\mf s} U_{\mf p} \subseteq^{\mf s}_{\mf m} U_{\mf p} \Cup V$ are obvious. 
To prove $U_{\mf p} \Cup V \subseteq^{\mf s}_{\mf m} V$, it suffices to show that for any $k$, $\phi_0, \ldots, \phi_k \in U$, and $\psi \in V$, there exists a $\rho \in V$ such that $\rho \vdash \phi_0 \land \cdots \land \phi_k \land \psi$. 
We prove the statement by induction on $k$, and the case of $k = 0$ is immediate from $U \Subset V$. 
Assume that the statement holds for $k$ and let $\phi_0, \ldots, \phi_k, \phi_{k+1} \in U$ and $\psi \in V$. 
By the induction hypothesis, there exists a $\rho \in V$ such that $\rho \vdash \phi_0 \land \cdots \land \phi_k \land \psi$. 
Since $U \Subset V$, there exists a $\rho' \in V$ such that $\rho' \vdash \phi_{k+1} \land \rho$. 
We obtain $\rho' \vdash \phi_0 \land \cdots \land \phi_k \land \phi_{k+1} \land \psi$. 

(b). Suppose $U \Subset V$ and $V$ is mono-consistent. 
Since $U_{\mf p} \subseteq_{\mf m} V$ by (a), we have $U_{\mf p} \subseteq V_{\mf m}$ and $U_{\mf r} \subseteq V_{\mf n}$. 
If $\xi \in U_{\mf p} \cap V_{\mf n}$ for some sentence $\xi$, then there would be a $\psi \in V$ such that $\psi \vdash \neg \xi$. 
Since $U_{\mf p} \Cup V \subseteq^{\mf s}_{\mf m} V$, there would be a $\rho \in V$ such that $\rho \vdash \xi \land \psi$. 
Then, $\rho$ is inconsistent. 
This contradicts the mono-consistency of $V$. 
Therefore, $U_{\mf p} \cap V_{\mf n} = \emptyset$. 
In a similar way, we can prove $U_{\mf r} \cap V_{\mf m} = \emptyset$. 

(c). By (a), $U \Subset U$ is equivalent to $U_{\mf p} \Cup U \subseteq^{\mf s}_{\mf m} U$, and to $U_{\mf p} \subseteq^{\mf s}_{\mf m} U$. 
Also, $U_{\mf p} \subseteq^{\mf s}_{\mf m} U$ is equivalent to $U_{\mf m} = U_{\mf p}$. 
\end{proof}

\subsection{Interpretability}

An interpretation $K$ of a theory $U$ in a theory $V$ is based on \emph{a translation} of the $U$-language into the $V$-language.
This translation commutes with the propositional connectives. In some broad sense, it also commutes with the
quantifiers but here there are a number of extra features. 
\begin{itemize}
\item
Translations may be more-dimensional: we allow a variable to be translated to an appropriate sequence of variables.
\item
We may have domain relativisation: we allow the range of the translated quantifiers to be some domain definable
in the $V$-language.
\item
We may even allow the new domain to be built up from pieces of, possibly, different dimensions.
\end{itemize} 
A further feature is that identity need not be translated to identity but can be translated to a congruence relation.
 Finally, we may also allow parameters in an interpretation.
To handle these the translation may specify a parameter-domain. 
For details on the various kinds of translation, we refer the reader to~\cite{Vis17}.

We can define the obvious identity translation of a language in itself and composition of translations.

\emph{An interpretation} is a triple $\tupel{U,\tau,V}$, where $\tau$ is a translation of the $U$-language in the
$V$-language such that, for all $\phi$, if $U \vdash \phi$, then $V \vdash \phi^\tau$.\footnote{In case we have parameters
with parameter-domain $\alpha$ this becomes: $V \vdash \exists \vv x\, \alpha(\vv x)$ and,
for all $\phi$, if $U \vdash \phi$, then $V \vdash \forall \vv x\, (\alpha(\vv x) \to \phi^{\tau,\vv x})$}

We write:
\begin{itemize}
\item
$K:U \lhd V$ for: $K$ is an interpretation of $U$ in $V$.
\item
$U \lhd V$ for: there is a $K$ such that $K:U \lhd V$. We also write
$V\rhd U$ for: $U \lhd V$.
\item
$U \lhd_{\sf loc} V$ for: for every finitely axiomatisable sub-theory $U_0$ of $U$, we have
$U_0 \lhd V$.
\item
 $U \lhd_{\sf mod} V$ for: for every $V$-model $\mc M$, there is a translation $\tau$ from the $U$-language in the $V$-language, such that
 $\tau$ defines an internal $U$-model $\mc N = \widetilde \tau(\mc M)$ of $U$ in $\mc M$.
\end{itemize}

In Appendix~\ref{locosmurf}, we will have a brief look at effective versions of local interpretability, essentially concluding that
all such versions collapse either to ordinary local interpretability or, somewhat surprisingly, to global interpretability.

In~\cite{Vis22}, the relation of \emph{essential tolerance} was studied since it has backwards preservation of essential 
hereditary undecidability. In Appendix~\ref{effesstol}, we briefly consider \emph{effective essential tolerance}.
This relation has backwards preservation of effective essential 
hereditary undecidability.

\subsection{The Non-Effective Notions}\label{noneffsmurf}

In this subsection, we introduce the non-effective notions. 
We will then discuss what the appropriate corresponding effective versions should be in the next subsection.

Our first building blocks are decidability, completeness and separability.
\begin{itemize}
    \item A theory $U$ is \emph{decidable} if there is an algorithm that decides provability in $U$.
    In other words, $U$ is decidable iff $U_{\mf p}$ is \compu.
    A theory is \emph{undecidable} if it is not decidable.
    \item 
    A theory $U$ is \emph{complete} if, for every $U$-sentence $\phi$, we have $U\vdash \phi$ or $U \vdash \neg\,\phi$.
    In other words, $U$ is complete iff $U_{\mf p} \cup U_{\mf r}= {\sf Sent}_U$.
    A theory is \emph{incomplete} if it is not complete.
\end{itemize}

Suppose $\mathcal P$ is a property of theories.
We say that $U$ is \emph{essentially $\mathcal P$} if all consistent \ce~extensions
(in the same language) of $U$ are $\mathcal P$. We say that $U$ is \emph{hereditarily $\mathcal P$} if all consistent \ce~sub-theories
of $U$ (in the same language) are $\mathcal P$. We say that $U$ is \emph{potentially $\mathcal P$} if some consistent \ce~extension
(in the same language) of $U$ is $\mathcal P$. 

We defined essential and potential and hereditary with respect to $\subseteq^{\mf s}$.
In this paper we also will consider these notions with respect to $\subseteq^{\mf s}_{\mf f}$.

If $\mathcal R$ is a relation between theories the use of \emph{essential} and \emph{hereditary} and \emph{potential} always
concerns the first component aka the subject. Thus, e.g., we say that \emph{$U$ essentially tolerates $V$} meaning that 
$U$ essentially has the property of tolerating $V$. Tolerance itself is defined as potential interpretation.
So $U$ essentially tolerates $V$ if $U$ essentially potentially interprets $V$.
We will have a closer look at essential tolerance in Appendix~\ref{effesstol}.

An important recursion theoretic notion is \compu\ (in)separability. Two \ce~sets $\mc X$ and $\mc Y$ are \emph{\compy\ separable}
iff, there is a \compu\ $\mc Z$ such that $\mc X\subseteq \mc Z$ and $\mc Y \subseteq \mc Z^{\sf c}$.
Two sets are \emph{\compy\ inseparable} iff they are not \compy\ separable.
We want to apply \compu\  (in)separability to theories and pairs of theories by designating certain sets
of sentences associated with the theories as candidates for \compu\ (in)separability.

Let us say that a pair of theories $(U,V)$ is \emph{acceptable} iff $U$ and $V$ have the same signature and are jointly consistent.
Let $(U,V)$ be acceptable. We define:
\begin{itemize}
    \item $(U,V)$ is \emph{\compy\ \textup(in\textup)separable} iff $U_{\mf p}$ and $V_{\mf r}$ are \compy\ (in)sep\-a\-ra\-ble.
    \item $U$ is \emph{\compy\ \textup(in\textup)separable} iff $(U,U)$ is \compy\ (in)separable.
    \item $U$ is \emph{strongly \compy\ \textup(in\textup)separable} iff $(U, 0_U)$ is \compy\ (in)sep\-a\-ra\-ble.
\end{itemize}
Here, $0_U$ denotes the pure predicate calculus in the language of $U$. 

We define: $(\mc X,\mc Y) \leq_1 (\mc Z,\mc W)$ iff there is an injective \compu\ function $f$, such that
$n\in \mc X$ iff $f(n) \in \mc Z$, and $n\in \mc Y$ iff $f(n) \in \mc W$. Our definition generalises
\cite[Definition 2.4.9, p40]{soar:turi16}, which coincides with our definition when we restrict ourselves
to disjoints pairs of sets. We have $(\mc X,\mc X) \leq_1 (\mc Y,\mc Y)$ iff $\mc X\leq_1\mc Y$.
Clearly, if $(\mc X,\mc Y)$ and $(\mc Z,\mc W)$ are disjoint pairs and if
$(\mc X,\mc Y)$ is \compy\ inseparable and $(\mc X,\mc Y) \leq_1(\mc Z,\mc W)$, then
$(\mc Z,\,\mc W)$ is \compy\ inseparable.

We have the following simple insights:

\begin{thm}
$(U,V)$ is \compy\ inseparable iff $(V,U)$ is \compy\ inseparable.
\end{thm}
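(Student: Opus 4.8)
The plan is to exploit the \emph{negation involution} on sentences, $f\colon \phi \mapsto \neg\,\phi$, which is an injective \compu\ map of the common sentence-set of $U$ and $V$ to itself. The whole theorem rests on two elementary equivalences, valid in classical logic: first, $\phi \in U_{\mf p}$ iff $U \vdash \phi$ iff $U \vdash \neg\,\neg\,\phi$ iff $\neg\,\phi \in U_{\mf r}$; and second, $\phi \in V_{\mf r}$ iff $V \vdash \neg\,\phi$ iff $\neg\,\phi \in V_{\mf p}$. In other words, applying $f$ trades membership in $U_{\mf p}$ for membership in $U_{\mf r}$, and simultaneously trades $V_{\mf r}$ for $V_{\mf p}$, interchanging theorems and anti-theorems within each theory.

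I would prove the contrapositive in the form of a \emph{separability} statement, which is the cleaner object to transform. Suppose $(U,V)$ is \compy\ separable, so there is a \compu\ set $\mc Z$ with $U_{\mf p} \subseteq \mc Z$ and $V_{\mf r} \subseteq \mc Z^{\sf c}$. Define $\mc Z' := \verz{\phi \mid \neg\,\phi \notin \mc Z}$, which is \compu\ since $\mc Z$ is and $f$ is \compu. Using the two equivalences above one checks directly that $V_{\mf p} \subseteq \mc Z'$ (if $V \vdash \phi$ then $\neg\,\phi \in V_{\mf r} \subseteq \mc Z^{\sf c}$, whence $\phi \in \mc Z'$) and $U_{\mf r} \subseteq (\mc Z')^{\sf c}$ (if $U \vdash \neg\,\phi$ then $\neg\,\phi \in U_{\mf p} \subseteq \mc Z$, whence $\phi \notin \mc Z'$). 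Thus $\mc Z'$ separates $V_{\mf p}$ from $U_{\mf r}$, i.e. $(V,U)$ is \compy\ separable. As the construction is symmetric in the two theories, separability of $(V,U)$ likewise yields separability of $(U,V)$, and taking contrapositives delivers the desired biconditional for inseparability.

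Alternatively, the same content can be packaged via the $\leq_1$ machinery introduced just above: the two equivalences say precisely that $f$ witnesses $(U_{\mf p}, V_{\mf r}) \leq_1 (U_{\mf r}, V_{\mf p})$, both pairs being disjoint by the joint consistency built into acceptability. The stated transfer fact then moves inseparability from $(U_{\mf p}, V_{\mf r})$ to $(U_{\mf r}, V_{\mf p})$, and the trivial symmetry of \compu\ separability (swap the separating set for its complement) identifies inseparability of $(U_{\mf r}, V_{\mf p})$ with that of $(V_{\mf p}, U_{\mf r})$, which is exactly $(V,U)$.

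I expect no genuine obstacle: the only thing to watch is the bookkeeping of which theory contributes theorems and which contributes anti-theorems, so that the complementation in the definition of $\mc Z'$ falls on the correct side. Acceptability guarantees that $U$ and $V$ share a signature, so $f$ and $\mc Z'$ are well defined, and the classical equivalence $U \vdash \phi \iff U \vdash \neg\,\neg\,\phi$ is all the logic the argument requires.
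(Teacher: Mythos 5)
Your proposal is correct, and your ``alternative packaging'' is exactly the paper's own proof: negation witnesses $(U_{\mf p},V_{\mf r}) \leq_1 (U_{\mf r},V_{\mf p})$, and the transfer fact for $\leq_1$ together with the trivial swap-symmetry of separability finishes the argument. Your main contrapositive construction of $\mc Z' = \verz{\phi \mid \neg\,\phi \notin \mc Z}$ is just that same reduction unwound by hand, so the two routes coincide in substance and both are sound.
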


\begin{proof}
We note that negation witnesses that $(U_{\mf p},V_{\mf r}) \leq_1  (U_{\mf r},V_{\mf p})$.
So if $(U_{\mf p},V_{\mf r})$ is \compy\ inseparable, then so is  $(V_{\mf p},U_{\mf r})$.
\end{proof}

\begin{thm}[Subtraction Theorem]\label{subtraction}
If $(U+\phi,V+\phi)$ is \compy\ inseparable, then so is $(U+\phi,V)$.
\end{thm}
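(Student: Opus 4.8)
The plan is to reduce the claim to the $\leq_1$-observation stated just above the theorem. Unfolding the definitions, the hypothesis says that the pair $((U+\phi)_{\mf p}, (V+\phi)_{\mf r})$ is \compy\ inseparable, whereas the conclusion asks for the \compy\ inseparability of $((U+\phi)_{\mf p}, V_{\mf r})$. These two pairs share their left component, so I would look for an injective \compu\ function $f$ witnessing $((U+\phi)_{\mf p}, (V+\phi)_{\mf r}) \leq_1 ((U+\phi)_{\mf p}, V_{\mf r})$ and then invoke that observation.

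The natural candidate is the conjunction map $f(\psi) := \phi \wedge \psi$, which is total, \compu, and injective on codes of sentences. First I would check the left-hand biconditional, that $\psi \in (U+\phi)_{\mf p}$ iff $\phi \wedge \psi \in (U+\phi)_{\mf p}$; this is immediate, since $U+\phi \vdash \psi$ is equivalent to $U+\phi \vdash \phi \wedge \psi$ via $U+\phi \vdash \phi$. Next I would check the right-hand biconditional, that $\psi \in (V+\phi)_{\mf r}$ iff $\phi \wedge \psi \in V_{\mf r}$; here the chain ``$V+\phi \vdash \neg \psi$ iff $V \vdash \phi \to \neg \psi$ iff $V \vdash \neg(\phi \wedge \psi)$'' does the work. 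Together, these two equivalences give the desired reduction.

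Before applying the $\leq_1$-observation I would verify its disjointness hypothesis for both pairs. Disjointness of $((U+\phi)_{\mf p}, (V+\phi)_{\mf r})$ is built into the acceptability of $(U+\phi, V+\phi)$, since for any acceptable pair joint consistency forces the left theorems and right anti-theorems to be disjoint. For the target pair I would note that $(U+\phi) \cup V$ and $(U+\phi) \cup (V+\phi)$ have the same axioms, the second copy of $\phi$ being redundant; hence joint consistency, and so acceptability and disjointness, transfers from $(U+\phi, V+\phi)$ to $(U+\phi, V)$. With both pairs disjoint, the source pair \compy\ inseparable, and the reduction in hand, the observation yields that $((U+\phi)_{\mf p}, V_{\mf r})$ is \compy\ inseparable, which is precisely the statement that $(U+\phi, V)$ is \compy\ inseparable.

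The reduction itself is a one-line syntactic map and the two biconditionals are routine propositional reasoning, so I expect the only point needing genuine care to be the bookkeeping around acceptability and disjointness of the target pair, which is where the hypotheses of the $\leq_1$-observation must be met.
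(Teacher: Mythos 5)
Your proposal is correct and is essentially the paper's own proof: the paper likewise uses the map $\psi \mapsto (\phi \wedge \psi)$ to witness $((U+\phi)_{\mf p},(V+\phi)_{\mf r})\leq_1 ((U+\phi)_{\mf p},V_{\mf r})$ and then invokes the $\leq_1$-observation. Your spelled-out verification of the two biconditionals and of the acceptability/disjointness bookkeeping is exactly what the paper leaves implicit.
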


\begin{proof}
The function $\psi \mapsto (\phi \wedge\psi)$ witnesses that \qedright
\[((U+\phi)_{\mf p},(V+\phi)_{\mf r})\leq_1 ((U+\phi)_{\mf p},V_{\mf r}).\]
\end{proof}


The \compu\ inseparability of a theory is closely related to the undecidability and the incompleteness of the theory. 
Indeed, the \compu\ inseparability of a theory $U$ implies the essential undecidability of $U$. 
For example, the \compu\ inseparability of the theory $\RR$ of weak arithmetic 
follows from the work by Smullyan~\cite{Smullyan58}, and then the essential undecidability of $\RR$ 
that was first established by Tarski, Mostowski and Robinson~\cite{TMR53} 
immediately follows. 
It is well-known that, for any consistent \ce~theory, essential incompleteness and essential undecidability 
are equivalent, and so the \compu\ inseparability of $\RR$ also yields the essential incompleteness of $\RR$. 
Here, the essential incompleteness of $\RR$ is also strengthened. 
Mostowski~\cite{Mos61} proved that $\RR$ is \textit{uniformly essentially incomplete}, that is, for any \compu\ sequence $(T_i)_{i \in \omega}$ of consistent \ce~extensions of $\RR$, there exists a sentence simultaneously independent of all theories in the sequence. 
Interestingly, Ehrenfeucht~\cite{ehre:sepa61} proved that Mostowski's theorem is equivalent to the \compu\ inseparability of $\RR$, namely, he
proved that, for any consistent \ce~theory $U$, $U$ is \compy\ inseparable if and only if $U$ is uniformly essentially incomplete. 

Finitely axiomatisable theories sometimes behave well. 
Tarski, Mostowski and Robinson~\cite{TMR53} showed that for a finitely axiomatisable theory, essential undecidability is equivalent to essential hereditary undecidability. 
Also, it follows from the Subtraction Theorem (Theorem~\ref{subtraction}) that for a finitely axiomatised theory, \compu\ inseparability is equivalent to strong \compu\ inseparability. 
Note that strong effective inseparability implies essential hereditary undecidability. 
So, the finitely axiomatised theory $\RQ$ which is an extension of $\RR$ is strongly \compy\ inseparable and essentially hereditarily undecidable. 
Here, since $\RR$ is not finitely axiomatisable, the essential hereditary undecidability of $\RR$ is non-trivial.
This was proved by Cobham, but his proof of the result was not published. 
Vaught~\cite{Vau62} gave a proof of Cobham's theorem by proving the strong \compu\ inseparability of $\RR$. 
For a detailed study of the notion of essential hereditary undecidability, see~\cite{Vis22}. 
See~\cite{Vis17} and~\cite{KV23} for new proofs of Cobham and Vaught's theorems.

Relating to these notions, we also introduce the following two notions: 
\begin{itemize}
    \item $U$ is \emph{f-essentially incomplete} iff, for any $U$-sentence $\phi$, if $U \cup \{\phi\}$ is consistent, then $U \cup \{\phi\}$ is incomplete. 
    
    \item $U$ is \emph{f-uniformly essentially incomplete} iff, for any $k \in \omega$, whenever
    $U_0$, \ldots, $U_{k-1}$ are consistent \ce~extensions of $U$ in the same language, then there is a sentence independent of each of the $U_0, \ldots, U_{k-1}$. 
\end{itemize}

It is easy to see that a theory $U$ is f-essentially incomplete iff the Lindenbaum algebra of $U$ is atomless. 
For f-uniform essential incompleteness, we have: 

\begin{prop}
Every essentially incomplete theory is f-uniformly essentially incomplete.
\end{prop}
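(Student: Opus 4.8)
The plan is to prove, by induction on $k$, the statement $P(k)$: \emph{any $k$ consistent \ce~extensions $U_0, \ldots, U_{k-1}$ of $U$ in the language of $U$ admit a single sentence independent of each of them}. The base case $P(1)$ is immediate, since a single consistent \ce~extension $U_0$ of $U$ is incomplete by essential incompleteness and hence has an independent sentence. Everything will hinge on the step $P(k)\To P(k+1)$, and the whole trick is to arrange that step so the induction hypothesis is applied only to families of size $k$, never $k+1$.

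For the step, suppose $U_0, \ldots, U_k$ are consistent \ce~extensions of $U$. By $P(k)$ applied to $U_0, \ldots, U_{k-1}$, I fix a sentence $\psi$ independent of each of these. If $\psi$ happens to be independent of $U_k$ as well, we are done; otherwise $U_k$ decides $\psi$, and replacing $\psi$ by $\neg\,\psi$ if necessary (which is still independent of $U_0, \ldots, U_{k-1}$) we may assume $U_k \vdash \psi$. Now I invoke $P(k)$ a \emph{second} time, on the family $U_0 + \neg\,\psi, \ldots, U_{k-1} + \neg\,\psi$ — again $k$ consistent \ce~extensions of $U$, consistent precisely because $\psi$ is independent of each $U_i$ — to obtain a single $\chi$ independent of each $U_i + \neg\,\psi$. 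Using that $U_k$ is incomplete, pick also $\rho$ independent of $U_k$. The proposed witness for $P(k+1)$ is the ``if $\psi$ then $\rho$ else $\chi$'' sentence
\[ \theta := (\neg\,\psi \wedge \chi) \vee (\psi \wedge \rho). \]

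It then remains to check that $\theta$ is independent of every $U_j$. For $U_k$: since $U_k\vdash\psi$, we have $U_k\vdash \theta \iff \rho$, so the independence of $\rho$ from $U_k$ transfers to $\theta$. For $U_i$ with $i<k$: the extension $U_i + \neg\,\psi + \chi$ is consistent and proves $\theta$, while $U_i + \neg\,\psi + \neg\,\chi$ is consistent and proves $\neg\,\theta$; as both are consistent extensions of $U_i$, neither $\theta$ nor $\neg\,\theta$ can be a $U_i$-theorem, so $\theta$ is independent of $U_i$. Note that the behaviour of $\rho$ on the theories $U_i$ ($i<k$) never enters, which is exactly what makes the combination close.

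I expect the main obstacle to be precisely this bookkeeping on the number of theories. A naive attempt to ``patch'' $\psi$ so as to also handle $U_k$ forces one to make an auxiliary sentence independent of $k+1$ theories simultaneously, and then the induction does not close. The device of conditioning on $\psi$ — whose truth value $U_k$ has already fixed — and only asking the auxiliary $\chi$ to split the $k$ theories $U_i + \neg\,\psi$ is what keeps every appeal to the induction hypothesis at size $k$; finding the right conditional combination $\theta$ is therefore the real content of the step.
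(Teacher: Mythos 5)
Your proof is correct, and it takes a genuinely different route through the inductive step than the paper does. The paper first asks whether $U_k$ is jointly consistent with some $U_i$ ($i<k$): if so, it merges the pair into $U_i\cup U_k$ and applies the induction hypothesis to the resulting family of $k$ theories (independence from a union passes down to both components); if not, the pairwise inconsistencies supply a single sentence $\phi$ with $U_k\vdash\phi$ and $U_i\vdash\neg\,\phi$ for every $i<k$, and the step concludes with the combination $(\rho\wedge\phi)\vee(\rho_k\wedge\neg\,\phi)$, where $\rho$ is independent of $U_k$ and $\rho_k$ is the inductive witness. You dispense with both the case split and the merging: your conditioning sentence $\psi$ is only half-decided --- provable in $U_k$ (after negating if necessary) but merely independent of the other $U_i$ --- and you repair the missing refutations on the $U_i$ side by a second appeal to the induction hypothesis, applied to the strengthened theories $U_i+\neg\,\psi$. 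Both arguments close with the same conditional-combination device, a sentence of the shape $(\text{witness for } U_k \wedge \text{condition})\vee(\text{witness for the rest}\wedge\neg\,\text{condition})$; the paper pays for its single use of the induction hypothesis with a case distinction and the separating sentence extracted from mutual inconsistency, whereas you pay with a second application of the hypothesis to modified theories --- perfectly legitimate, since the inductive statement quantifies over all families of size $k$, including ones obtained by extending the given theories by a sentence.
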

\begin{proof}
We prove, by induction on $k$, that, if $U_0 \ldots, U_{k-1}$ are consistent \ce~extensions of $U$ in the same language, then there is a sentence $\rho_k$ independent of each of the $U_0, \ldots, U_{k-1}$.
We set $\rho_0 := \top$. 
In case $U_i \cup U_k$ is consistent for some $i<k$, we replace both $U_i$ and $U_k$ with $U_i \cup U_k$,
and apply the induction hypothesis to the reduced sequence. 
Suppose all the $U_i \cup U_k$, for $i<k$, are inconsistent.
We define $\rho_{k+1}$. For each $i<k$, there is a $\phi_i$, such that $U_k \vdash \phi_i$ and $U_i \vdash \neg\, \phi_i$.
Let $\phi$ be the conjunction of the $\phi_i$.
So, $U_k \vdash \phi$ and, for
each of the $U_i$, where $i<k$, we have $U_i \vdash \neg \,\phi$. 
Suppose $\rho$ is independent of $U_k$.
We define $\rho_{k+1} := (\rho \wedge \phi) \vee (\rho_k \wedge \neg\,\phi)$.
It is immediate that this does as promised.
\end{proof}

The relationships between these non-effective notions are visualised in Figure~\ref{Fig1}.
In~\cite{viss:nomi23}, the existence of a decidable f-essentially incomplete theory was proved. 
Also, essential hereditary undecidability and \compu\ inseparability are incomparable in general (cf.~\cite[Example 6]{Vis22}). 
Therefore, none of the implications in Figure~\ref{Fig1} are reversible.
Related to this figure, one could consider the notions such as f-essential undecidability and f-essential hereditary undecidability etc., but we will not deal with these notions, as they are beside the main subject of this paper. 

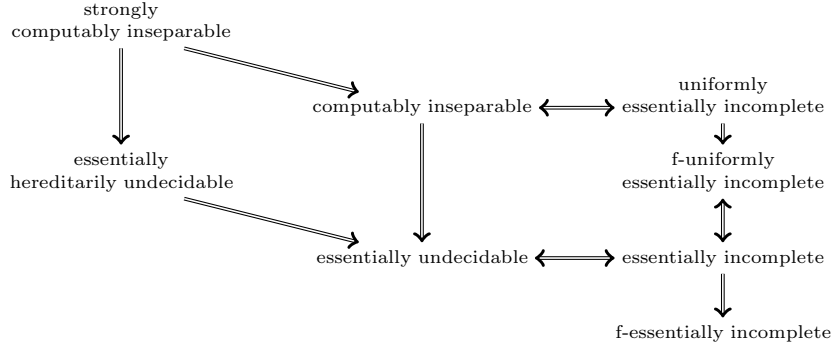
\begin{figure}[ht]
\centering
\begin{tikzpicture}
{\scriptsize
\node (fEI) at (4,-1) {f-essentially incomplete};

\node (EU) at (0,0) {essentially undecidable};

\node (EI) at (4,0) {essentially incomplete};

\node (WUEI1) at (4,1.3) {f-uniformly};
\node (WUEI2) at (4,1) {essentially incomplete};

\node (RI) at (0,2) {\compy\ inseparable};

\node (UEI1) at (4,2.3) {uniformly};
\node (UEI2) at (4,2) {essentially incomplete};

\node (EHU1) at (-4,1.3) {essentially};
\node (EHU2) at (-4, 1) {hereditarily undecidable};

\node (SRI1) at (-4,3.3) {strongly};
\node (SRI2) at (-4,3) {\compy\ inseparable};

\draw [<->, double] (EU)--(EI);
\draw [<->, double] (EI)--(WUEI2);
\draw [<->, double] (RI)--(UEI2);
\draw [->, double] (RI)--(EU);
\draw [->, double] (UEI2)--(WUEI1);
\draw [->, double] (EHU2)--(EU);
\draw [->, double] (SRI2)--(EHU1);
\draw [->, double] (SRI2)--(RI);
\draw [->, double] (EI)--(fEI);
}
\end{tikzpicture}
\caption{Implications between non-effective notions}\label{Fig1}
\end{figure}

In what follows, we explore the effectivisations of these notions of incompleteness, undecidability, and inseparability. 

\subsection{What is Effective?}\label{whatsmurf}
Notions like \emph{essential} and \emph{hereditary} operate extensionally on the
notions they modify. The situation is not so simple for adding \emph{effective}.
Adding ``effective'' in front of an expression operates intensionally on \emph{the definition} of the concept.

Suppose the definition of $\mc P$ has the form $\forall \vv x\,\exists \vv y\, \phi(\vv x,\vv y)$, 
where $\phi$ does not start with an existential quantifier. We propose to say that
 \emph{effectively $\mc P$} means that there are \compu\ functions $\vv \Phi$ 
 such that $\forall \vv x\, \phi(\vv x,\vv \Phi(\vv x))$.
If the given definition of $\mc P$ has the form $\forall \vv x\,(\psi(\vv x) \to \exists \vv y\, \phi(\vv x,\vv y))$,
where $\phi$ does not start with an existential quantifier,
 then, \emph{effectively $\mc P$} means that there are partial \compu\ functions $\vv \Phi$ such that
 \[\forall \vv x\, (\psi(\vv x) \to \exists \vv z\, (\vv \Phi(\vv x)\simeq \vv z \wedge  \phi(\vv x,\vv z))).\]

 \begin{rem}
     An alternative proposal would be to suggest that \emph{effectively $\mc P$} simply means the Kleene realisability
     of the salient definition of $\mc P$. However,  this does not always deliver the desired outcomes.
 \end{rem}
  
\subsubsection{Effective Undecidability}
  A \ce~set $\mc X$ is decidable iff $\exists i \, ((\mc X \cap {\sf W}_i) = \emptyset \wedge (\mc X \cup {\sf W}_i) = \omega)$.
So, \emph{$\mc X$ is undecidable} means:
\[\forall i\, (\exists y\, (y \in \mc X \wedge y\in {\sf W}_i) \vee \exists x\, (x\not\in \mc X \wedge x \not\in {\sf W}_i)).\]
Equivalently, 
\[\forall i\, \exists x\, ((x \in \mc X \wedge x\in {\sf W}_i) \vee (x\not\in \mc X \wedge x \not\in {\sf W}_i)).\] 
The constructivisation of this is: there is a \compu\ $\Phi$ such that:
\[\forall i\,  ((\Phi(i) \in \mc X \wedge \Phi(i)\in {\sf W}_i) \vee (\Phi(i)\not\in \mc X \wedge \Phi(i) \not\in {\sf W}_i)).\] 
So this is the notion of \emph{being constructively non-\compu}. See \cite[p162]{roge:theo67}. 

Alternatively, \emph{$\mc X$ is undecidable} also means:
\[\forall i\, \exists y\, ((\mc X \cap {\sf W}_i) = \emptyset \Rightarrow (y \not \in \mc X \wedge y \not \in {\sf W}_i)).\]
So, the effectivisation of this is: there is a \compu\ $\Phi$ such that: 
\[\forall i\,  ((\mc X \cap {\sf W}_i) = \emptyset \Rightarrow  (\Phi(i) \not \in \mc X \wedge \Phi(i) \not \in {\sf W}_i)).\] 
This is exactly the notion of \emph{being creative}. 

Every constructively non-\compu\ set is exactly a \ce~set whose complement is completely productive, which is a notion introduced by Dekker~\cite{Dek55}. 
It is proved in \cite[p183, Theorem VI]{roge:theo67} that productivity and complete productivity coincide, and hence creativity and constructive non-\compab\ also coincide. 
So, these notions serve stable effectivisation of the notion of undecidability.

\subsubsection{Effective Essential Undecidability}
Let us assume  the definition of the essential undecidability of $U$ is:
 \begin{multline*}
\forall i\,\forall j \, \Bigl(\bigl({\sf W}_{i} \vdash U\text{ and } {\sf con}({\sf W}_i)\bigr)\; \To \\
  \exists x\, \bigl((x\not\in {\sf W}_{i\mf p} \wedge x \not\in {\sf W}_j) \vee  (x \in {\sf W}_{i\mf p} \wedge x\in {\sf W}_j)\bigr)\Bigr). 
\end{multline*}
Here ${\sf con}({\sf W}_i)$ is an abbreviation of the statement `${\sf W}_i$ is consistent'. 
So our recipe gives:  there is a partial \compu\ $\Psi$ such that:
{\small
\begin{multline*}
  \forall i\,\forall j \, \Bigl(\bigl({\sf W}_{i} \vdash U\text{ and } {\sf con}({\sf W}_i)\bigr) \To \\
 \bigl(\Psi(i,j) {\downarrow} \wedge
   \bigl((\Psi(i,j)\not\in {\sf W}_{i\mf p} \wedge \Psi(i,j) \not\in {\sf W}_j) \vee  
   (\Psi(i,j) \in {\sf W}_{i\mf p} \wedge \Psi(i,j)\in {\sf W}_j)\bigl)\bigl)\Bigr).
\end{multline*}
} 
By the usual argument, we can always choose $\Psi$ total.
Moreover, our definition is equivalent to:
there is a total \compu\ $\Theta$ such that:
{\small
 \[ \forall i \, \bigl(({\sf W}_{i} \vdash U\text{ and } {\sf con}({\sf W}_i)\bigr) \To 
\bigl( \lambda j.\Theta(i,j) \text{ witnesses that ${\sf W}_{i\mf p}$ is creative)}\bigr) \] 
}
So this gives us that effective essential undecidability is the same thing as effective essential creativity.
We will work with the last notion.
This notion was suggested in Feferman's paper \cite[Footnote 11]{fefe:degr57} and investigated by Smullyan~\cite{Smullyan60}.

In the rest of the paper, we will simply stipulate the effective versions of the relevant notions. The reader may amuse
herself by deriving the definitions following our recipe. We briefly discuss why there is not separate notion of
effective local interpretability in Appendix~\ref{locosmurf}.

\subsubsection{Constraining the Witness}
Effective notions usually have a partial \compu\ function $\Phi$ that chooses some (counter)example.
In many cases, it is interesting to put a constraint on the (counter)examples, i.e., on the range of
witness providing function $\Phi$. For example, consider effective essential incompleteness.
One usually specifies that the witnesses can be chosen to be $\Sigma^0_1$ (or, equivalently, $\Pi^0_1$). 
In this case we will speak of effective essential $\Sigma^0_1$-incompleteness. 
We will see that there are other interesting restriction than this familiar one.

More generally, if the constraint
$\mc X$ is a set of numbers coding sentences-of-the-given-signature, and $\mc P$ is the property of theories under
consideration, we will speak about \emph{effective $\mc X$-$\mc P$}.
Note that we do not demand that $\mc X$ is \ce.

We can make this even more general. Let $\mc F$ be a function from sets of sentences to sets of sentences (all of the given signature).
We do not put any effectivity constrains on $\mc F$. Moreover, we allow $\mc F$ to be empty on some arguments.
For example, $U$ is effectively essentially $\mc F$-incomplete iff, for every $i$ such that ${\sf W}_i$ axiomatizes a consistent extension of
$U$, there is a $\phi\in \mc F({\sf W}_i)$, such that $\phi$ is independent of ${\sf W}_i$.  
We note that if $\mc F$ has constant value $\mc X$, we are back in the simpler case.

\subsection{Effective Inseparability}
Two disjoint \ce~sets $\mc X$ and $\mc Y$ are said to be \emph{effectively inseparable} iff, there exists a partial \compu\ function $\Phi$ such that for any \ce~sets ${\sf W}_i$ and ${\sf W}_j$, if ${\sf W}_i$ and ${\sf W}_j$ weakly bi-separate $\mc X$ and $\mc Y$, then $\Phi(i, j)$ converges and $\Phi(i, j) \notin {\sf W}_i \cup {\sf W}_j$. 
Let $(U,V)$ be acceptable pair of theories. We define:
\begin{itemize}
    \item $(U,V)$ is \emph{effectively inseparable} iff $U_{\mf p}$ and $V_{\mf r}$ are effectively inseparable.
    \item $U$ is \emph{effectively inseparable} iff $(U,U)$ is effectively inseparable.
    \item $U$ is \emph{strongly effectively inseparable} iff $(U, 0_U)$ is effectively inseparable.
\end{itemize}

We define witness comparison notation.
For every \ce~relation $R(\vec{x})$, we can effectively find a primitive \compu\ relation $R^\star(\vec{x}, y)$ such that $R(\vec{x})$ iff $\exists y\, R^\star(\vec{x}, y)$. 
For all pairs of \ce~relations $R_0(\vec{x})$ and $R_1(\vec{x})$, we define:
\begin{itemize}
    \item $R_0(\vec{x}) \leq R_1(\vec{x}) :\iff \exists y\, (R_0^\star(\vec{x}, y) \wedge \forall z < y\, \neg\,R_1^\star(\vec{x}, z))$,
    \item $R_0(\vec{x}) < R_1(\vec{x}) :\iff \exists y\, (R_0^\star(\vec{x}, y) \wedge \forall z \leq y\, \neg\,R_1^\star(\vec{x}, z))$. 
\end{itemize}
We note that the witness comparison notation is \emph{intensional}. The procedure to
find $R^\star$ given $R$ operates on a presentation of $R$ and gives a presentation of $R^\star$ as
output. 

\begin{prop}
In the definition of effective inseparability, restricting weakly bi-separating sets to bi-separating sets yields an equivalent notion. 
\end{prop}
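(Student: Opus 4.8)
The plan is to prove the two implications separately. Write $\mathrm{EI}_{\mathrm w}$ for the notion as defined in the text (the demand is imposed on all weakly bi-separating pairs ${\sf W}_i, {\sf W}_j$) and $\mathrm{EI}_{\mathrm b}$ for the variant in which the demand is imposed only on bi-separating pairs. One direction is immediate: every bi-separating pair is in particular weakly bi-separating, so the class of pairs on which $\mathrm{EI}_{\mathrm b}$ places a requirement is a sub-class of the one for $\mathrm{EI}_{\mathrm w}$. Hence any partial \compu\ $\Phi$ witnessing $\mathrm{EI}_{\mathrm w}$ already witnesses $\mathrm{EI}_{\mathrm b}$, giving $\mathrm{EI}_{\mathrm w} \Rightarrow \mathrm{EI}_{\mathrm b}$.

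The substance is the converse. Suppose $\Phi$ witnesses $\mathrm{EI}_{\mathrm b}$ for the disjoint \ce\ sets $\mc X$ and $\mc Y$. Given indices $i,j$ with ${\sf W}_i, {\sf W}_j$ weakly bi-separating $\mc X, \mc Y$, the only way they can fail to bi-separate is a possible overlap ${\sf W}_i \cap {\sf W}_j \neq \emptyset$. I would remove this overlap by a standard ``race'' construction: enumerate ${\sf W}_i$ and ${\sf W}_j$ in parallel and, for each $n$, place $n$ into a new set $A'$ if it is enumerated into ${\sf W}_i$ no later than into ${\sf W}_j$ (breaking ties in favour of $A'$), and into $B'$ if it is enumerated into ${\sf W}_j$ strictly earlier. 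Both $A'$ and $B'$ are \ce, and by the $s$-$m$-$n$ theorem one can compute indices $g(i,j), h(i,j)$ with ${\sf W}_{g(i,j)} = A'$ and ${\sf W}_{h(i,j)} = B'$ from $i,j$.

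Next I would verify the properties that make the reduction work. By construction $A' \cap B' = \emptyset$ and $A' \cup B' = {\sf W}_i \cup {\sf W}_j$, since each element of the union is assigned to exactly one side according to which enumeration reaches it first; also $A' \subseteq {\sf W}_i$ and $B' \subseteq {\sf W}_j$. The pair still weakly bi-separates: because $\mc X \subseteq {\sf W}_i$ and $\mc X \cap {\sf W}_j = \emptyset$, every $n \in \mc X$ is enumerated into ${\sf W}_i$ and never into ${\sf W}_j$, whence $n \in A'$; symmetrically $\mc Y \subseteq B'$; and $A' \cap \mc Y = \emptyset$, $B' \cap \mc X = \emptyset$ follow from $A' \subseteq {\sf W}_i$, $B' \subseteq {\sf W}_j$. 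Together with disjointness, $A'$ and $B'$ bi-separate $\mc X$ and $\mc Y$. I then set $\Psi(i,j) := \Phi(g(i,j), h(i,j))$: on any weakly bi-separating pair, ${\sf W}_{g(i,j)}$ and ${\sf W}_{h(i,j)}$ bi-separate $\mc X, \mc Y$, so $\Phi$ converges with value outside $A' \cup B' = {\sf W}_i \cup {\sf W}_j$, and hence $\Psi(i,j)\downarrow$ with $\Psi(i,j) \notin {\sf W}_i \cup {\sf W}_j$, showing $\Psi$ witnesses $\mathrm{EI}_{\mathrm w}$.

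The only delicate point is the bookkeeping in the race construction: one must ensure that the union is preserved \emph{exactly}, so that producing a witness outside $A' \cup B'$ is the same as producing one outside ${\sf W}_i \cup {\sf W}_j$, while simultaneously guaranteeing that all of $\mc X$ lands in $A'$ and all of $\mc Y$ in $B'$. This is precisely where the weak bi-separation hypotheses $\mc X \cap {\sf W}_j = \emptyset$ and $\mc Y \cap {\sf W}_i = \emptyset$ are used, and it is the step one should state carefully rather than treat as routine.
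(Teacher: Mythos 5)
Your proof is correct and follows essentially the same route as the paper: the paper's proof also replaces a weakly bi-separating pair $({\sf W}_i,{\sf W}_j)$ by the disjoint pair obtained from the enumeration race (expressed there via witness comparison, ${\sf W}_{k_0} = \{n \mid (n \in {\sf W}_i) \leq (n \in {\sf W}_j)\}$ and ${\sf W}_{k_1} = \{n \mid (n \in {\sf W}_j) < (n \in {\sf W}_i)\}$), relying on exactly the properties you verify, namely preservation of the union and of the weak bi-separation inclusions. Your write-up merely makes explicit the verification the paper leaves implicit.
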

\begin{proof}
This is because if ${\sf W}_i$ and ${\sf W}_j$ weakly bi-separate $\mc X$ and $\mc Y$, then we can effectively find $k_0$ and $k_1$ such that ${\sf W}_{k_0} \cup {\sf W}_{k_1} = {\sf W}_i \cup {\sf W}_j$, and ${\sf W}_{k_0}$ and ${\sf W}_{k_1}$ bi-separate $\mc X$ and $\mc Y$. 
Such $k_0$ and $k_1$ are obtained by letting
\begin{itemize}
    \item ${\sf W}_{k_0} = \{n \mid (n \in {\sf W}_i) \leq (n \in {\sf W}_j)\}$ and 
    \item ${\sf W}_{k_1} = \{n \mid (n \in {\sf W}_j) < (n \in {\sf W}_i)\}$.\qedhere
\end{itemize}
\end{proof}

Smullyan analyzed the notion of effective inseparability extensively, and in particular various notions related to effective inseparability are discussed in his~\cite{Smullyan63,Smullyan93}. 
For more information on this topic, see the recent paper by Yong Cheng \cite{Cheng23}.

The effective inseparability of the theory $\RR$ follows from a result established by Smullyan~\cite{Smullyan60}. 
Smullyan mentioned that effective inseparability implies effective essential creativity. 
Also, for any consistent \ce~theory, effective essential creativity clearly implies effective essential incompleteness. 
Ehrenfeucht~\cite{ehre:sepa61} provided an essentially incomplete theory that is not \compy\ inseparable, but, in the effective case, such an example cannot exist. 
Namely, for any consistent \ce~theory, effective essential incompleteness implies effective inseparability. 
This result was established by Pour-El. 
Actually, Pour-El proved more. 
We say that a theory $U$ is \emph{effectively if-essentially $\mc F$-incomplete} iff there is a partial \compu\ function $\Phi$ such that for any $i$, if ${\sf W}_i$ is a consistent finite extension of $U$, then $\Phi(i)$ converges, $\Phi(i) \in \mc F({\sf W}_i)$, and $\Phi(i)$ is independent of ${\sf W}_i$. 
Here, `if' stands for `intensional finite extensions'. 
The notion of effective ef-essential $\mc F$-incompleteness is studied in Section~\ref{sec_ef}, where `ef' stands for `extensional finite extensions'. 
Pour-El called effective essential incompleteness \emph{effective extensibility}. 
She called effective if-essential incompleteness \emph{weak effective extensibility}.
Pour-El's theorem is stated as follows: 

\begin{thm}[Pour-El~{\cite[Theorem 1]{pour:effe68}}]
For any consistent \ce~theory $U$, the following are equivalent: 
\begin{enumerate}[a.]
    \item $U$ is effectively inseparable.
    \item $U$ is effectively essentially creative.
    \item $U$ is effectively essentially incomplete.
    \item $U$ is effectively if-essentially incomplete.
\end{enumerate}
\end{thm}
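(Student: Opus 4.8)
The plan is to prove the cycle $a \Rightarrow b \Rightarrow c \Rightarrow d \Rightarrow a$; three of the four steps are routine, and essentially all the content lies in $d \Rightarrow a$.

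The easy arc runs as follows. For $a \Rightarrow b$, given the effective inseparability witness $\Psi$ of $(U_{\mf p},U_{\mf r})$ and an $i$ with ${\sf W}_i$ a consistent extension of $U$, I would handle an input $j$ with ${\sf W}_{i\mf p}\cap{\sf W}_j=\emptyset$ by computing indices $a,b$ with ${\sf W}_a={\sf W}_{i\mf p}$ and ${\sf W}_b={\sf W}_{i\mf r}\cup{\sf W}_j$ and setting $\Theta(i,j):=\Psi(a,b)$. A short check shows ${\sf W}_a,{\sf W}_b$ weakly biseparate $U_{\mf p},U_{\mf r}$ (using ${\sf W}_i\vdash U$, consistency of ${\sf W}_i$, and ${\sf W}_j\cap{\sf W}_{i\mf p}=\emptyset$), so $\Theta(i,j)\notin{\sf W}_a\cup{\sf W}_b\supseteq{\sf W}_{i\mf p}\cup{\sf W}_j$; thus $\lambda j.\Theta(i,j)$ witnesses creativity of ${\sf W}_{i\mf p}$ uniformly in $i$. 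For $b \Rightarrow c$, I would feed the creativity witness of ${\sf W}_{i\mf p}$ the disjoint set ${\sf W}_{i\mf r}$: the output lies outside ${\sf W}_{i\mf p}\cup{\sf W}_{i\mf r}$ and so is a sentence independent of ${\sf W}_i$. Finally $c \Rightarrow d$ is immediate, since every consistent finite extension of $U$ is a consistent extension.

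The heart is $d \Rightarrow a$. Let $\Phi$ witness effective if-essential incompleteness. Given $i,j$, I would use the recursion theorem to obtain, uniformly, an index $e$ for the c.e.\ set ${\sf W}_e$ built by: enumerate the axioms of $U$ (so ${\sf W}_e$ always axiomatises a finite extension of $U$); compute $\sigma:=\Phi(e)$; once $\sigma$ appears, watch ${\sf W}_i$ and ${\sf W}_j$ by witness comparison, adjoining the axiom $\sigma$ if $\sigma$ enters ${\sf W}_i$ no later than ${\sf W}_j$, and adjoining $\neg\,\sigma$ if $\sigma$ enters ${\sf W}_j$ strictly first. Put $\Psi(i,j):=\Phi(e)$. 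First, $\Phi(e)$ converges, since otherwise the watching never begins, ${\sf W}_e$ axiomatises $U$ (a consistent finite extension), and $d$ forces $\Phi(e)\!\downarrow$. Now assume ${\sf W}_i,{\sf W}_j$ weakly biseparate $U_{\mf p},U_{\mf r}$ and, for contradiction, $\sigma\in{\sf W}_i\cup{\sf W}_j$, so one branch fires. The weak-biseparation hypotheses certify consistency: if $\sigma\in{\sf W}_i$ then $\sigma\notin U_{\mf r}$, so $U+\sigma$ is consistent, and if $\sigma\in{\sf W}_j$ then $\sigma\notin U_{\mf p}$, so $U+\neg\,\sigma$ is consistent. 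Either way ${\sf W}_e$ is a consistent finite extension, whence $\sigma=\Phi(e)$ is independent of ${\sf W}_e$ by $d$; but ${\sf W}_e$ proves $\sigma$ (resp.\ $\neg\,\sigma$), a contradiction. Hence $\sigma\notin{\sf W}_i\cup{\sf W}_j$, as effective inseparability requires.

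The main obstacle is exactly the bootstrapping in this last step: $\sigma=\Phi(e)$ is a legitimate independent witness only once ${\sf W}_e$ is known to be a consistent finite extension, yet the construction of ${\sf W}_e$ consumes $\sigma$. The recursion theorem dissolves this circularity, and the two weak-biseparation conditions are precisely what is needed to guarantee consistency of $U+\sigma$ (resp.\ $U+\neg\,\sigma$) in the two branches. Making these ingredients mesh is the crux; the remaining directions are bookkeeping with indices and the biseparation definition.
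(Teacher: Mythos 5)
Your proof is correct and follows essentially the same route as the paper's: the easy arc $a\Rightarrow b\Rightarrow c\Rightarrow d$ by index bookkeeping, plus a Recursion Theorem diagonalization for $d\Rightarrow a$ that builds a self-referential if-extension ${\sf W}_e$ of $U$ adjoining $\Phi(e)$ or $\neg\,\Phi(e)$ according to which of ${\sf W}_i$, ${\sf W}_j$ the value lands in, and then refutes both branches. Your use of witness comparison so that exactly one branch fires when $\Phi(e)\in{\sf W}_i\cap{\sf W}_j$ (which can happen under mere weak biseparation) is in fact slightly tidier than the paper's construction, which would adjoin both $\Phi(e)$ and $\neg\,\Phi(e)$ in that overlap case and strictly speaking needs its earlier reduction from weakly biseparating to biseparating pairs to cover it.
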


In the next section, we prove a slightly strengthened version of Pour-El's theorem (Theorem~\ref{pour-el}). 
Furthermore, our proof is slightly simpler than the original one.
For completeness, we describe Pour-El's original argument in Appendix~\ref{original}.

We say that a theory $U$ is \emph{effectively uniformly essentially $\mc X$-incomplete} iff, there is a partial \compu\ function $\Phi$ such that for any $j$, if $j$ is the index of a \compu\ sequence of consistent \ce~extensions $(T_i)_{i \in \omega}$ of $U$, then we have that $\Phi(j)$ converges, $\Phi(j)\in \mc X$, and $\Phi(j)$ is independent of $T_i$ for all $i \in \omega$. 
For any consistent \ce~theory, effective inseparability easily implies effective uniform essential incompleteness. 
So, one could say that, in the effective case, Ehrenfeucht's result on the equivalence between \compu\ inseparability and uniform essential incompleteness is superseded by Pour-El's work.
However, we do think it is instructive to
include the effective analogues of Ehrenfeucht's theorem here, also since these results have entirely self-reference free proofs.  
We also present the effective version of Ehrenfeucht's results from the mono-perspective.

\begin{thm}\label{effehr3}
Let $U$ be a consistent \ce~theory. Let $\mc X$ be a set of sentences.
The following are equivalent.
\begin{enumerate}[a.]
\item
$U$ is effectively $\mc X$-inseparable, i.e., there is a partial \compu\ function $\Phi$, such that for all pairs of 
sets ${\sf W}_i$, ${\sf W}_j$ that weakly bi-separate $U_{\mf p}$ and $U_{\mf r}$, 
we have $\Phi(i,j)$ converges, $\Phi(i,j) \in \mc X$, and $\Phi(i,j) \not\in \mathsf W_i \cup \mathsf W_j$. 
\item
$U$ is effectively uniformly essentially $\mc X$-incomplete, i.e., there is a partial \compu\ function $\Psi_0$, such that, 
for every \compu\ sequence of consistent \ce~extensions $U'_i$ of $U$ with index $j$, we have that
$\Psi_0(j)$ converges, $\Psi_0(j)\in \mc X$, and, for all $i$,  $U'_i \nvdash  \Psi_0(j)$ and $U'_i \nvdash \neg\, \Psi_0(j)$.
\item
There is a partial \compu\ function $\Psi_1$, such that, 
for every \compu\ sequence of $U$-sentences $\nu_0,\nu_1,\dots$ with index $j$, such that each $\nu_i$ is consistent with $U$, 
$\Psi_1(j)$ converges, $\Psi_1(j) \in \mc X$, and,
 for all $i$, we have $U \nvdash \nu_i \to \Psi_1(j) $ and $U \nvdash \nu_i \to \neg\, \Psi_1(j)$.
\item
There is a partial \compu\ function $\Psi_2$, such that, for every \compu\ sequence of $U$-sentences $\chi_0,\chi_1,\dots$ with index $j$, 
such that each $\chi_i$ is consistent with $U$, we have $\Psi_2(j)$ converges,  $\Psi_2(j)\in \mc X$, and,
for all $i$, we have  $0_U \nvdash \chi_i \to \Psi_2(j)$ and $0_U \nvdash \chi_i \to \neg\, \Psi_2(j)$.
\item
$U$ is effectively $\Subset$-essentially $\mc X$-$\mf m$-incomplete, i.e.,
there is a partial \compu\ function $\Psi_3$, such that for every mono-consistent  ${\sf W}_j\Supset U$, we have
$\Psi_3(j)$ converges, $\Psi_3(j)\in \mc X$, and, $ \Psi_3(j)\not\in {\sf W}_{j\mf m}\cup {\sf W}_{j\mf n}$.
\item
There is a partial \compu\ function $\Psi_4$ such that, for  any ${\sf W}_j$ such that $\widehat U \Cup {\sf W}_j$ is mono-consistent,
we have $\Psi_4(j)$ converges, $\Psi_4(j)\in \mc X$, and, $ \Psi_4(j)\not\in {\sf W}_{j\mf m}\cup {\sf W}_{j\mf n}$.
\end{enumerate}

Moreover, each of \textup(a\textup), \textup(b\textup), \textup(c\textup), \textup(d\textup), \textup(e\textup), \textup(f\textup) is equivalent to a version, say \textup(a$'$\textup),  
\textup(b$'$\textup), \textup(c$'$\textup), \textup(d$'$\textup), \textup(e$'$\textup),  \textup(f$'$\textup) where the witnessing function is total.
\end{thm}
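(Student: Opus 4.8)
The plan is to establish all the equivalences first with \emph{partial} witnesses, isolating $(a)\iff(b)$ as the recursion-theoretic core (the effective form of Ehrenfeucht's theorem) and deriving the rest by translating between computable sequences of theories and mono-theories via the adjunction $\widehat{(\cdot)}$ and the $\mc T^{\mf m}$-construction. For $(a)\Rightarrow(b)$, given a computable sequence $(U'_i)$ of consistent \ce\ extensions of $U$ with index $j$, I form the associated mono-theory $W:=\bigcup_i \widehat{U'_i}$; its mono-theorems $W_{\mf m}=\{\sigma\mid \exists i\; U'_i\vdash\sigma\}$ and mono-refutables $W_{\mf n}$ are \ce\ uniformly in $j$ and weakly biseparate $U_{\mf p},U_{\mf r}$ (the two disjointness clauses use consistency of each $U'_i$ together with $U\subseteq U'_i$), so feeding their indices to $\Phi$ returns a sentence in $\mc X$ independent of every $U'_i$. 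For the converse $(b)\Rightarrow(a)$ — the genuinely Ehrenfeucht-flavoured direction — given ${\sf W}_a,{\sf W}_b$ weakly biseparating $U_{\mf p},U_{\mf r}$, I build the computable sequence listing $U+\phi$ for each $\phi$ enumerated into ${\sf W}_a$ and $U+\neg\phi$ for each $\phi$ enumerated into ${\sf W}_b$; these are consistent because ${\sf W}_a\cap U_{\mf r}=\emptyset$ and ${\sf W}_b\cap U_{\mf p}=\emptyset$, and the sentence $\sigma$ returned by $\Psi_0$ can lie in neither ${\sf W}_a$ (else $U+\sigma\vdash\sigma$) nor ${\sf W}_b$ (else $U+\neg\sigma\vdash\neg\sigma$), so $\sigma\notin{\sf W}_a\cup{\sf W}_b$.

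The remaining conditions are bookkeeping variants of $(b)$. For $(b)\iff(c)$, a sequence $(\nu_i)$ of $U$-sentences each consistent with $U$ is exactly the sequence $(U+\nu_i)$, with $U+\nu_i\nvdash\sigma$ iff $U\nvdash\nu_i\to\sigma$; conversely an arbitrary sequence $(U'_i)$ is captured by the sequence of all finite conjunctions of $U'_i$-axioms, using $U'_i\vdash\sigma$ iff $\widehat{U'_i}\vdash_{\mf m}\sigma$. Since $0_U\subseteq U$, the implication $(c)\Rightarrow(d)$ is immediate. For $(b)\iff(e)$ I pass between sequence and mono-theory: from a mono-consistent $W\Supset U$ each $U+\psi$ with $\psi\in W$ is a consistent extension (inconsistency would force $\psi\in W_{\mf m}\cap W_{\mf n}$), and avoiding $W_{\mf m}\cup W_{\mf n}$ is precisely independence from all of them, while the reverse reuses $W=\bigcup_i\widehat{U'_i}$, for which $U\Subset W$ and mono-consistency are readily checked. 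For $(e)\iff(f)$ the substantive direction $(e)\Rightarrow(f)$ uses $V:=\widehat U\Cup{\sf W}_j$: one verifies $U\Subset V$, that $V$ is mono-consistent exactly when $\widehat U\Cup{\sf W}_j$ is, and that ${\sf W}_{j\mf m}\subseteq V_{\mf m}$ and ${\sf W}_{j\mf n}\subseteq V_{\mf n}$, so a witness avoiding $V_{\mf m}\cup V_{\mf n}$ avoids ${\sf W}_{j\mf m}\cup{\sf W}_{j\mf n}$ (the direction $(f)\Rightarrow(e)$ is immediate since $\widehat U\Cup{\sf W}_j$ is mono-consistent whenever ${\sf W}_j\Supset U$ is). Finally $(d)\iff(f)$ is the exact translation ${\sf W}_j=\{\chi_i\mid i\}$: ``$\widehat U\Cup{\sf W}_j$ mono-consistent'' unwinds to ``each $\chi_i$ is consistent with $U$,'' and $\sigma\notin{\sf W}_{j\mf m}\cup{\sf W}_{j\mf n}$ unwinds to $0_U\nvdash\chi_i\to\sigma$ and $0_U\nvdash\chi_i\to\neg\sigma$ for all $i$. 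Together with the earlier Proposition allowing passage between weak and strict biseparation, these give all six equivalences with partial witnesses.

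The ``Moreover'' clause is where I expect the real obstacle. I would first upgrade $(a)$ to a total witness $(a')$ by the classical totalization of effective inseparability: a pair of \ce\ sets that is effectively inseparable via a partial function is effectively inseparable via a total one, proved by a recursion-theorem construction (Smullyan; Rogers, via productive functions). The delicate point is that this must respect the range constraint, and it does, since the total witness only ever returns values that the original $\Phi$ returns on weakly-biseparating inputs, so its range stays inside $\mc X$. Granting $(a')$, totality then propagates for free: each implication out of $(a)$ above produces its witness by composing a total recursive index-transformation with the $(a)$-witness, so from the total $\Phi'$ we obtain total $\Psi_0,\dots,\Psi_4$, that is $(b'),\dots,(f')$. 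Since each primed statement trivially implies its unprimed version, while each unprimed statement implies $(a)$ hence $(a')$ hence every primed version, all the equivalences between unprimed and primed forms close. I anticipate the recursion-theoretic totalization, and the verification that it leaves the witness set $\mc X$ intact, to be the main difficulty; the conceptual heart $(b)\Rightarrow(a)$ and the mono-theory translations are otherwise routine.
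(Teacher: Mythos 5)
Your proof is correct, and it is organized genuinely differently from the paper's. The paper establishes the six equivalences as a single cycle (a)$\to$(b)$\to$(c)$\to$(d)$\to$(e)$\to$(f)$\to$(a), so the only closing step is (f)$\to$(a), done via the mono-theory $\widehat U \Cup \bigl({\sf W}_i \cup \verz{\chi \mid (\neg\,\chi)\in {\sf W}_j}\bigr)$; you instead make (b) the hub, closing the loop with a direct Ehrenfeucht-style argument for (b)$\Rightarrow$(a) (the sequence listing $U+\phi$ for $\phi$ enumerated in ${\sf W}_a$ and $U+\neg\,\phi$ for $\phi$ in ${\sf W}_b$), and then proving two-way translations (b)$\Leftrightarrow$(c), (b)$\Leftrightarrow$(e), (e)$\Leftrightarrow$(f), (d)$\Leftrightarrow$(f). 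This costs more implications than a cycle needs, but each is a routine unwinding, and your direct (d)$\Leftrightarrow$(f) dictionary (mono-consistency of $\widehat U\Cup{\sf W}_j$ equals consistency of each $\chi_i$ with $U$, by compactness; avoiding ${\sf W}_{j\mf m}\cup{\sf W}_{j\mf n}$ equals the two non-derivability clauses over $0_U$) makes transparent why those two items say the same thing. The substantive divergence is the \emph{Moreover} clause. The paper totalizes each statement separately by a soft parallel-search trick: pass from $(i,j)$ to indices $(i',j')$ of $U_{\mf p}\cup{\sf W}_i$ and $U_{\mf r}\cup{\sf W}_j$, then dovetail the computation of $\Phi(i',j')$ against a search for a violation of disjointness, which is the only way weak biseparation of the padded sets can fail and is a c.e.\ event; no recursion theorem is needed. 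You instead invoke the recursion-theoretic totalization once, at (a), and propagate totality through your web of index-transformations, which is more economical (one totalization instead of six), and your key observation --- that on weakly biseparating inputs the padded indices denote the very same sets, so the returned value is a $\Phi$-value on a biseparating input and hence lies in $\mc X$ --- is exactly what makes the range constraint survive; the same observation underlies the paper's trick. The trade-off is that the authors explicitly advertise this theorem as having an entirely self-reference free proof, a feature their parallel-search totalization preserves and your recursion-theorem step gives up; so your anticipated ``main difficulty'' is, on the paper's route, no difficulty at all. Finally, both you and the paper gloss the same degenerate cases (e.g.\ an empty ${\sf W}_j$ in (e)/(f), where ``an enumeration of its elements'' does not literally exist and one should pad the sequence with a tautology); this is shared, harmless sloppiness rather than a gap in your argument.
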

\begin{proof}
``(a) to (b)''. 
Consider a \compu\ sequence $(U'_i)_{i\in \omega}$ of consistent \ce~extensions of $U$. Say our sequence has index $j$.
Let $V:= \bigcup_{i\in \omega} U'_{i\mf p}$ and let $W := \bigcup_{i\in \omega} U'_{i\mf r}$.
We can find indices $k$ and $\ell$ for $V$ and $W$ effectively from $j$. 
Clearly, $V$ and $W$ weakly bi-separate $U_{\mf p}$ and $U_{\mf r}$.   
We take $ \Psi_0(j) := \Phi(k,\ell)$.
It follows that $ \Psi_0(j)$ is in $\mc X$ and is independent of each $U'_i$.
 
\medskip
``(b) to (c)''. We can effectively transform an index $j$ of the sequence $(\nu_i)_{i\in \omega}$ into an index $j'$ of the sequence of theories
$(U+\nu_i)_{i\in \omega}$. We set $\Psi_1(j) := \Psi_0(j')$.

\medskip
``(c) to (d)''. We can take $\Psi_2:= \Psi_1$.

\medskip
``(d) to (e)''. Suppose ${\sf W}_j \Supset U$ and ${\sf W}_j$ is mono-consistent. 
We can effectively find an index $k$ of an enumeration $(\chi_s)_{s\in \omega}$ of the elements of ${\sf W}_j$ from $j$.
Since ${\sf W}_j$ and $U_{\mf r}$ are disjoint, we have that $\chi_s$ is consistent with $U$ for each $s \in \omega$. 
We obtain that $\Psi_2(k)$ converges, $\Psi_2(k) \in \mc X$, and for all $s$, $0_U \nvdash \chi_s \to \Psi_2(k)$ and $0_U \nvdash \chi_s \to \neg\, \Psi_2(k)$. 
We then have $\Psi_2(k) \notin {\sf W}_{j \mf m} \cup {\sf W}_{j \mf n}$. 
We take $\Psi_3(j) := \Psi_2(k)$.

\medskip
``(e) to (f)''. Consider any \ce~${\sf W}_j$ such that $W:=\widehat U \Cup {\sf W}_j$ is mono-consistent.
It is easy to see that $W \Supset U$. We can easily find an index $k$ of $W$ from $j$. We take $\Psi_4(j) := \Psi_3(k)$.

\medskip
``(f) to (a)''. Suppose ${\sf W}_i$ and ${\sf W}_j$ weakly bi-separate $U_{\mf p}$ and $U_{\mf r}$.
Let \[ V := {\sf W}_i \cup \verz{ \chi \mid (\neg\,\chi) \in {\sf W}_j}.\] 
If $\widehat U \Cup V$ were mono-inconsistent, there would be a $\nu \in V$ and $U \vdash \neg\, \nu$. The second conjunct tells us
that $ \nu \not\in {\sf W}_i$ and $(\neg\,\nu )\not\in {\sf W}_j$. A contradiction. So,
$\widehat U \Cup V$ is mono-consistent. We clearly can find an index $k$ of $V$ effectively from $i$ and $j$.
We take $\Phi(i,j) := \Psi_4(k)$.
 
\medskip
 We prove the equivalence between (a) and (a$'$). The (a$'$)-to-(a) direction is trivial. We assume (a).
 Consider any pair of indices $i,j$. We define $\Phi^\ast$.
We can effectively find indices $i',j'$ of $U_{\mf p} \cup {\sf W}_i$ and $U_{\mf r} \cup {\sf W}_j$.
 We compute $\Phi(i',j')$ and, simultaneously, we seek a counterexample to the claim that ${\sf W}_{i'},{\sf W}_{j'}$ weakly
 biseparate ${\sf U}_{\mf p}$ and $U_{\mf r}$. If we find a value of $\Phi(i',j')$ first then we give that as output of $\Phi^\ast(i,j)$.
 If we find a counterexample first, we give that as output (or, alternatively, we give some fixed chosen sentence as output). 
 It is easy to see that $\Phi^\ast$ is total and satisfies our specification.
 
 The equivalences of (b) and (b$'$), (c) and (c$'$), (d) and (d$'$), (e) and (e$'$), and (f) and (f$'$) are proved in an analogous way. 
\end{proof}

\section{Pour-El's Theorem}\label{pour-el-sec}

In this section, we prove Pour-El's Theorem in a slightly stronger version (Section~\ref{mainsmurf}). 
We give a variant (Section~\ref{doublesmurf}) and describe some sample applications
(Sections~\ref{orey} and~\ref{conssmurf}). 
In Section~\ref{heredity}, we present an adaptation of the argument that applies to effective hereditary creativity.

\subsection{The Theorem}\label{mainsmurf}

In this subsection, we give our version of Pour-El's result.

\begin{thm}\label{pour-el}
    Suppose $U$ is effectively if-essentially $\mc F$-incomplete. Then, $U$ is effectively $\mc F(U)$-inseparable.
\end{thm}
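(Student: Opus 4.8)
The plan is to derive effective $\mc F(U)$-inseparability from the hypothesis by a self-reference construction in the spirit of Pour-El. Fix a partial computable witness $\Phi$ for effective if-essential $\mc F$-incompleteness, so that whenever ${\sf W}_e$ is a consistent finite extension of $U$ we have $\Phi(e){\downarrow}\in\mc F({\sf W}_e)$ and $\Phi(e)$ is independent of ${\sf W}_e$. Given a pair $i,j$ with ${\sf W}_i,{\sf W}_j$ weakly biseparating $U_{\mf p}$ and $U_{\mf r}$, I would use the parametrised recursion theorem to obtain, computably in $i,j$, an index $e=e(i,j)$ of a c.e. theory ${\sf W}_e$ defined with knowledge of $e$ itself, and then set $\Phi'(i,j):=\Phi(e)$. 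Since $e$ depends computably on $i,j$ and $\Phi$ is partial computable, $\Phi'$ is partial computable, which is exactly what the target notion demands.

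The theory ${\sf W}_e$ is enumerated as follows. Throughout, we enumerate precisely the axioms of $U$ into ${\sf W}_e$. In parallel we run $\Phi(e)$; once (if) it halts with value $\sigma$, we search for $\sigma$ entering ${\sf W}_i$ or ${\sf W}_j$. The first time $\sigma$ is seen in ${\sf W}_i$ we adjoin $\sigma$ to ${\sf W}_e$, and the first time it is seen in ${\sf W}_j$ we adjoin $\neg\sigma$ (ties broken arbitrarily); after a single such adjunction we stop reacting. Thus ${\sf W}_e$ is always a finite extension of $U$, equal to $U$ on the nose unless one sentence is adjoined. Convergence of $\Phi(e)$ is secured by a bootstrapping observation: if $\Phi(e){\uparrow}$ then nothing is ever adjoined, so ${\sf W}_e=U$ is a consistent finite extension of $U$, whence the hypothesis forces $\Phi(e){\downarrow}$, a contradiction. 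Hence $\sigma:=\Phi(e)$ is defined.

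It remains to analyse the final shape of ${\sf W}_e$. If we ever adjoin $\sigma$ (because $\sigma\in{\sf W}_i$), then ${\sf W}_i\cap U_{\mf r}=\emptyset$ gives $U\nvdash\neg\sigma$, so ${\sf W}_e=U+\sigma$ is a consistent finite extension of $U$; by hypothesis $\sigma$ is then independent of ${\sf W}_e$, contradicting ${\sf W}_e\vdash\sigma$. Symmetrically, adjoining $\neg\sigma$ (because $\sigma\in{\sf W}_j$) makes ${\sf W}_e=U+\neg\sigma$ consistent via ${\sf W}_j\cap U_{\mf p}=\emptyset$, again contradicting independence. Hence neither adjunction occurs, so ${\sf W}_e=U$ as a set and $\sigma\notin{\sf W}_i\cup{\sf W}_j$; moreover $\sigma=\Phi(e)\in\mc F({\sf W}_e)=\mc F(U)$. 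This is precisely the requirement on $\Phi'(i,j)=\sigma$.

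The delicate point I would treat most carefully is the circular convergence issue: ${\sf W}_e$ is defined using $\sigma=\Phi(e)$, yet $\Phi(e)$ is guaranteed to converge only when ${\sf W}_e$ is a consistent finite extension. The construction sidesteps this by the bootstrapping argument above (non-convergence would make ${\sf W}_e=U$, forcing convergence) together with the reductio in the case analysis (each adjunction produces a consistent finite extension whose promised independent sentence is provably non-independent). One must also keep ${\sf W}_e$ literally equal to $U$ in the surviving case, so that $\mc F({\sf W}_e)=\mc F(U)$ and the output genuinely lands in $\mc F(U)$; this is why we enumerate exactly $U$ and adjoin at most one axiom. If $U$ happens to be inconsistent the statement is vacuous, since then no pair weakly biseparates $U_{\mf p}$ and $U_{\mf r}$.
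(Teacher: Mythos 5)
Your proof is correct and takes essentially the same route as the paper's: apply the Recursion Theorem with parameters to build a self-referential finite extension of $U$ that adjoins $\Phi(e)$ or $\neg\,\Phi(e)$ according to which of ${\sf W}_i$, ${\sf W}_j$ captures it, refute both adjunction cases via the independence guarantee, and conclude that ${\sf W}_e = U$, so that $\Phi(e)$ converges to a sentence in $\mc F(U)$ outside ${\sf W}_i \cup {\sf W}_j$. Your ``first-seen, then stop reacting'' clause is even a small refinement: the paper's construction as written would adjoin both $\Phi(k^\ast)$ and $\neg\,\Phi(k^\ast)$ in the edge case $\Phi(k^\ast) \in {\sf W}_i \cap {\sf W}_j$ (weak biseparation does not force disjointness), which your single-adjunction rule cleanly avoids.
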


\begin{proof}
Let $U$ be effectively if-essentially $\mc F$-incomplete with  partial \compu\ witness $\Phi$.
Suppose ${\sf W}_i$ and ${\sf W}_j$ weakly biseparate $U_{\mf p}$ and $U_{\mf r}$.

Using the Recursion Theorem with parameters (cf.~\cite[Theorem 3.5]{Soar87}), we effectively find a $k^\ast$ (depending of $i$ and $j$) such that
\begin{multline*}
{\sf W}_{k^\ast} = U \cup \verz{\phi \mid  \Phi(k^\ast)\simeq \phi \text{ and }\phi \in {\sf W}_i}\; \cup \\
  \verz{\psi \mid \exists \phi \,(\Phi(k^\ast)\simeq \phi \text{ and }\phi \in {\sf W}_j  \text{ and } \psi = \neg\,\phi)}.
  \end{multline*}
We note that we have:
{\small
\[
	{\sf W}_{k^\ast} = \begin{cases} U \cup \{\Phi(k^\ast)\}  & \ \text{if $\Phi(k^\ast)$ converges and
      $\Phi(k^\ast) \in {\sf W}_i$},\\
	U \cup \{\neg\, \Phi(k^\ast)\}  & \ \ \text{if $\Phi(k^\ast)$ converges and
      $\Phi(k^\ast) \in {\sf W}_j$}, \\
	U & \ \text{otherwise}. \end{cases}
\]
}%

Let $U^\ast :={\sf W}_{k^\ast}$.
\begin{itemize}
    \item 
Suppose $\Phi(k^\ast)$ converges to, say, $\phi^\ast$ and 
$\phi^\ast \in {\sf W}_i$. It follows that  $U^\ast= U\cup\verz{\phi^\ast}$. 
Since $\phi^\ast\not\in U_{\mf r}$, we have that $U^\ast$ is consistent and, hence, $\phi^\ast$ is independent of $U^\ast$. A contradiction.
\item
Suppose $\Phi(k^\ast)$ converges to, say, $\phi^\ast$ and 
$\phi^\ast \in {\sf W}_j$. It follows that $U^\ast= U\cup\verz{\neg\,\phi^\ast}$.
Since $\phi^\ast\not\in U_{\mf p}$, we have that $U^\ast$ is consistent, and, hence, $\phi^\ast$ is independent of $U^\ast$. A contradiction.
\end{itemize}
We may conclude that $U^\ast = U$. Hence, $\Phi(k^\ast)$ converges, say to $\phi^\ast$. We have $\phi^\ast \not\in {\sf W}_i\cup{\sf W}_j$.

Clearly, our argument delivers a total \compu\ $\Psi$, such that, whenever ${\sf W}_i$ and ${\sf W}_j$ weakly biseparate $U_{\mf p}$ and $U_{\mf r}$,
we have $\Psi(i,j) \not\in {\sf W}_i\cup{\sf W}_j$. 

Finally, we note that the ${\sf W}_{k^\ast}$ are all equal to $U$ and, thus $\Psi(i,j)= \Phi(k^\ast) \in \mc F(U)$. 
\end{proof}

As a consequence of Theorem~\ref{pour-el}, we obtain the following corollary showing the equivalence of several relating notions. 
For each formula class $\mc X$ and theory $U$, let $[\mc X]_U$ be the closure of $\mc X$ under $U$-provable equivalence.


\begin{cor}\label{cor_pour-el}
For any consistent \ce~theory $U$ and set of sentences $\mc X$, the following are equivalent: 
\begin{enumerate}[a.]
    \item $U$ is effectively $\mc X$-inseparable.
    \item $U$ is effectively essentially $\mc X$-creative.
    \item $U$ is effectively essentially $\mc X$-incomplete.
    \item $U$ is effectively if-essentially $\mc X$-incomplete. 
\end{enumerate}

Moreover, in case $\mc X$ is \ce, each of \textup(a\textup), \textup(b\textup), \textup(c\textup), \textup(d\textup) is equivalent to a version, 
say \textup(a$'$\textup), \textup(b$'$\textup), \textup(c$'$\textup), \textup(d$\,'$\textup) where $\mc X$ is replaced by $[\mc X]_U$.
\end{cor}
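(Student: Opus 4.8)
The plan is to close the cycle of implications (d)$\,\To\,$(a)$\,\To\,$(b)$\,\To\,$(c)$\,\To\,$(d), whose one substantial link, (d)$\,\To\,$(a), is handed to us by Theorem~\ref{pour-el}. Specialising that theorem to the constant function $\mc F$ with value $\mc X$ gives precisely that effective if-essential $\mc X$-incompleteness implies effective $\mc X$-inseparability, since for constant $\mc F$ we have $\mc F({\sf W}_i) = \mc X$ for every $i$, so effective if-essential $\mc F$-incompleteness unfolds to (d), while $\mc F(U) = \mc X$, so effective $\mc F(U)$-inseparability is just (a). The remaining three links are effective renderings of classical implications.

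For (a)$\,\To\,$(b): starting from an effective $\mc X$-inseparability witness $\Phi$ for $(U_{\mf p}, U_{\mf r})$, given a consistent extension ${\sf W}_i$ of $U$ and an index $j$ with ${\sf W}_j \cap {\sf W}_{i\mf p} = \emptyset$, I would pass to the pair ${\sf W}_{i\mf p}$ and ${\sf W}_j \cup U_{\mf r}$, whose indices are computable from $i,j$. Using consistency of ${\sf W}_i$ and of $U$ together with ${\sf W}_j \cap {\sf W}_{i\mf p} = \emptyset$, one checks that these two sets weakly biseparate $U_{\mf p}$ and $U_{\mf r}$; hence $\Phi$ returns a value in $\mc X$ lying outside ${\sf W}_{i\mf p} \cup {\sf W}_j \cup U_{\mf r}$, which is exactly a creativity witness for ${\sf W}_{i\mf p}$ evaluated at $j$. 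For (b)$\,\To\,$(c): feeding the creativity witness the index of ${\sf W}_{i\mf r}$, which is disjoint from ${\sf W}_{i\mf p}$ by consistency, yields a sentence in $\mc X$ independent of ${\sf W}_i$. Finally (c)$\,\To\,$(d) is immediate, since every consistent finite extension is in particular a consistent c.e.\ extension, so an effective essential $\mc X$-incompleteness witness already works once its domain is restricted to indices of finite extensions. Each step keeps the chosen witness inside the fixed set $\mc X$.

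For the ``Moreover'' clause, I would invoke the already-proved main equivalence twice: once for $\mc X$, yielding the equivalence of (a)--(d), and once for the set $[\mc X]_U$, yielding the equivalence of the primed conditions (a$'$)--(d$'$); note the main equivalence imposes no hypothesis on the set, so this second application is legitimate and non-circular. Since $\mc X \subseteq [\mc X]_U$, each unprimed condition trivially implies its primed counterpart via the same witnessing function, so it remains only to recover one unprimed condition from a primed one. I would do this through (c$'$)$\,\To\,$(c): given $\Phi$ witnessing effective essential $[\mc X]_U$-incompleteness and a consistent extension ${\sf W}_i$ of $U$, set $\phi := \Phi(i)$ and search, in parallel, through an enumeration of $\mc X$ and through $U$-proofs for a $\psi \in \mc X$ with $U \vdash \phi \iff \psi$. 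Such a $\psi$ exists by the definition of $[\mc X]_U$, so the search halts, and here is the only place the hypothesis that $\mc X$ is c.e.\ is used. Because ${\sf W}_i \supseteq U$, the equivalence $\phi \iff \psi$ holds over ${\sf W}_i$, so independence of $\phi$ transfers to $\psi$, which now lies in $\mc X$.

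The one genuinely delicate decision is to route the $[\mc X]_U$-transfer through the incompleteness formulation rather than the inseparability one. Independence from ${\sf W}_i$ is invariant under $U$-provable equivalence, so swapping a witness $\phi$ for an equivalent $\psi \in \mc X$ is harmless; by contrast, an inseparability witness must avoid the arbitrary weakly biseparating sets ${\sf W}_i, {\sf W}_j$, which are not closed under provable equivalence, so the analogous substitution there would not obviously preserve the defining property. Beyond this routing choice and the use of c.e.-ness of $\mc X$ to force the search to converge, everything else is index bookkeeping.
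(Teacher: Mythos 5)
Your proposal is correct and follows essentially the same route as the paper: the substantial link (d)$\,\To\,$(a) via Theorem~\ref{pour-el} with constant $\mc F \equiv \mc X$, the remaining links of the cycle (which the paper dismisses as obvious and you correctly spell out), and the ``Moreover'' clause handled exactly as in the paper, by bridging (c$'$) to (c) through a search for a $U$-provably equivalent member of $\mc X$ (the only use of c.e.-ness), exploiting that independence over an extension of $U$ is invariant under $U$-provable equivalence.
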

\begin{proof}
The implications ``(a) to (b)'', ``(b) to (c)'', and ``(c) to (d)'' are obvious. 

\medskip
``(d) to (a)''. Immediate from Theorem~\ref{pour-el} by letting $\mc F$ to be the function having the constant value $\mc X$. 

\medskip
Suppose $\mc X$ is \ce
It suffices to show the equivalence of (c) and \textup(c$'$\textup). The implication ``(c) to \textup(c$'$\textup)'' is trivial because $\mc X \subseteq [\mc X]_U$. 
We treat ``\textup(c$'$\textup) to (c)''. 
Suppose that $U$ is effectively essentially $[\mc X]_U$-incomplete, as witnessed by a partial \compu\ function $\Phi$. 
Note that if $T$ is a consistent extension of $U$ and $\phi$ is independent of $T$, then each element of $[\{\phi\}]_U$ is also independent of $T$. 
Let $\Psi$ be a partial \compu\ function such that for each $i$, if $\Phi(i)$ converges, then $\Psi(i)$ is in $[\{\Phi(i)\}]_U \cap \mc X$. 
Then, it is shown that $\Psi$ witnesses the effective $\mc X$-incompleteness of $U$. 
\end{proof}

\begin{cor}\label{cor_pour-el_2}
If $U$ is effectively if-essentially $\mc F$-incomplete, then $U$ is effectively $\mc F(U)$-inseparable and effectively if-essentially $\mc F(U)$-incomplete
\end{cor}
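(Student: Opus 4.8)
The plan is to derive Corollary~\ref{cor_pour-el_2} directly by combining Theorem~\ref{pour-el} with a second application of the same machinery. We are given that $U$ is effectively if-essentially $\mc F$-incomplete. The first conjunct of the conclusion, namely that $U$ is effectively $\mc F(U)$-inseparable, is literally the statement of Theorem~\ref{pour-el}, so nothing new is needed there. The content of the corollary lies in the second conjunct: we must upgrade the original effective if-essential $\mc F$-incompleteness (where the witness for ${\sf W}_i$ lies in the variable set $\mc F({\sf W}_i)$) to effective if-essential $\mc F(U)$-incompleteness, where we now demand that the independent witness for \emph{every} consistent finite extension ${\sf W}_i$ of $U$ be drawn from the single fixed set $\mc F(U)$.

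The key observation is that the proof of Theorem~\ref{pour-el} already produces, for a pair $i,j$ weakly biseparating $U_{\mf p}$ and $U_{\mf r}$, a witness of the form $\Phi(k^\ast)$ where the diagonalized index $k^\ast$ always satisfies ${\sf W}_{k^\ast}=U$, whence $\Phi(k^\ast)\in\mc F(U)$. So the inseparability witness produced by that theorem already lands in $\mc F(U)$. To get effective if-essential $\mc F(U)$-incompleteness, I would feed this back through Corollary~\ref{cor_pour-el} (or, more directly, through the implication chain inside Theorem~\ref{pour-el} run in reverse): since $U$ is effectively $\mc F(U)$-inseparable, and effective $\mc X$-inseparability implies effective if-essential $\mc X$-incompleteness (this is the equivalence established in Corollary~\ref{cor_pour-el} with $\mc X := \mc F(U)$, a constant-valued choice of the function), we immediately conclude that $U$ is effectively if-essentially $\mc F(U)$-incomplete.

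Concretely, I would first invoke Theorem~\ref{pour-el} to obtain effective $\mc F(U)$-inseparability. Then I would set $\mc X := \mc F(U)$ and apply the ``(a) to (d)'' direction of Corollary~\ref{cor_pour-el}, which holds for any consistent \ce~theory $U$ and any set of sentences $\mc X$, to conclude that $U$ is effectively if-essentially $\mc F(U)$-incomplete. Since effective $\mc X$-inseparability gives effective essential $\mc X$-creativity, which gives effective essential $\mc X$-incompleteness, which in turn gives effective if-essential $\mc X$-incompleteness, this closes the argument.

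The only subtlety — and the step I would watch most carefully — is the interface between the variable-set formulation with the function $\mc F$ and the constant-set formulation with $\mc F(U)$ as a fixed $\mc X$. Once we replace $\mc F$ by the constant function with value $\mc F(U)$, effective if-essential $\mc F(U)$-incompleteness in the sense of the function $\mc F$ (witness for ${\sf W}_i$ in $\mc F(U)$, independently of which finite extension ${\sf W}_i$ is) coincides exactly with effective if-essential $\mc X$-incompleteness for $\mc X = \mc F(U)$ in the constant sense used in Corollary~\ref{cor_pour-el}. So no genuine obstacle arises; the corollary is essentially a packaging of Theorem~\ref{pour-el} together with the ``(a) to (d)'' loop of Corollary~\ref{cor_pour-el}, and the brevity of the intended proof reflects this.
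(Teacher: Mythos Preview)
Your proposal is correct and is exactly the intended argument: the paper gives no explicit proof of this corollary precisely because it follows immediately from Theorem~\ref{pour-el} for the first conjunct and then from the implication (a)$\Rightarrow$(d) of Corollary~\ref{cor_pour-el} with $\mc X := \mc F(U)$ for the second. Your remark about the interface between the function-$\mc F$ formulation and the constant-set formulation is the only thing worth noting, and you handle it correctly.
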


\subsection{A Variant: Double Generativity}\label{doublesmurf}

In Smullyan's book \cite{Smullyan93}, many notions that are equivalent to effective inseparability were introduced (see also \cite{Cheng23}).
As a sample of variants of the argument in Subsection \ref{mainsmurf}, we focus on double generativity among them as the double analogue of constructive non--\compab, and discuss its witness constraining version. 

We say that a theory $U$ is \textit{doubly $\mc X$-generative} iff there exists a total \compu\ function $\Phi$ such that for any $i, j \in \omega$, if ${\sf W}_i \cap {\sf W}_j = \emptyset$, then \begin{itemize}
    \item $\Phi(i, j) \in U_{\mf p}$ iff $\Phi(i, j) \in {\sf W}_j$, 
    \item $\Phi(i, j) \in U_{\mf r}$ iff $\Phi(i, j) \in {\sf W}_i$, 
    \item if $\Phi(i, j) \notin {\sf W}_i \cup {\sf W}_j$, then $\Phi(i, j) \in \mc X$. 
\end{itemize}

We obtain a \emph{prima facie} less constrictive notion if we demand that the ${\sf W}_i$, ${\sf W}_j$ are the $V_{\mf p}$, $V_{\mf r}$ of some theory $V$. 
A consistent \ce~theory $U$ is \emph{$\mc X$-theory-generative} iff, there is a total 
\compu\ function $\Psi$, such for every consistent theory $V$ in the
$U$-language with index $i$, we have:
\begin{itemize}
    \item $\Psi(i) \in U_{\mf p}$ iff $\Psi(i) \in V_{\mf r}$, 
    \item $\Psi(i) \in U_{\mf r}$ iff $\Psi(i) \in V_{\mf p}$, 
    \item if $\Psi(i) \notin V_{\mf p} \cup V_{\mf r}$, then $\Psi(i) \in \mc X$.
    \end{itemize}

\begin{thm}
For any consistent \ce~theory $U$ and set of sentences $\mc X$, the following are equivalent: 
\begin{enumerate}[a.]
    \item $U$ is doubly $\mc X$-generative.
    \item $U$ is $\mc X$-theory-generative.
    \item $U$ is effectively if-essentially $\mc X$-incomplete. 
\end{enumerate}
\end{thm}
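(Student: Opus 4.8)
The plan is to prove the cycle of implications (a) $\To$ (b) $\To$ (c) $\To$ (a). The implications (a) $\To$ (b) should be routine, since $\mc X$-theory-generativity is the special case of double $\mc X$-generativity where the pair of disjoint \ce\ sets is required to be of the form $(V_{\mf r}, V_{\mf p})$ for a consistent theory $V$; one checks that any such pair is indeed disjoint (by consistency of $V$) and that the three conditions match up after swapping the roles of the two sets appropriately. So I would first record that a consistent theory $V$ with index $i$ yields indices for $V_{\mf r}$ and $V_{\mf p}$, and then set $\Psi(i) := \Phi(i_{\mf r}, i_{\mf p})$, where $i_{\mf r}, i_{\mf p}$ are obtained effectively from $i$, and verify the defining clauses transfer.

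\medskip
For (b) $\To$ (c), I would extract the effective if-essential $\mc X$-incompleteness witness from the theory-generative function $\Psi$. Given an index of a consistent finite extension ${\sf W}_i$ of $U$, I set $V := {\sf W}_i$ (as a theory in the $U$-language) and compute $\Psi$ applied to an index of $V$. The point is that $\Psi(i) \in U_{\mf p}$ iff $\Psi(i) \in V_{\mf r}$ and $\Psi(i) \in U_{\mf r}$ iff $\Psi(i) \in V_{\mf p}$; since $V \supseteq U$, we have $U_{\mf p} \subseteq V_{\mf p}$ and $U_{\mf r} \subseteq V_{\mf r}$, and combining these containments with the biconditionals should force $\Psi(i) \notin V_{\mf p} \cup V_{\mf r}$, hence $\Psi(i)$ is independent of $V$ and, by the third clause, $\Psi(i) \in \mc X$. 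The bookkeeping with the $\mf p$/$\mf r$ swap is where I expect the only real care to be needed.

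\medskip
The direction (c) $\To$ (a) is the substantive one and is where I expect the main obstacle. The strategy is a self-reference construction parallel to the proof of Theorem~\ref{pour-el}. Given disjoint ${\sf W}_i, {\sf W}_j$, I would use the Recursion Theorem with parameters to build a diagonal theory $U^\ast = {\sf W}_{k^\ast}$ that adjoins to $U$ the sentence $\Phi(k^\ast)$ (respectively its negation) according to whether $\Phi(k^\ast)$ lands in ${\sf W}_j$ (respectively ${\sf W}_i$), as in the Pour-El argument, except that here the two sets play the roles demanded by double generativity: a value landing in ${\sf W}_j$ should be forced into $U_{\mf p}$ and one landing in ${\sf W}_i$ into $U_{\mf r}$. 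As in Theorem~\ref{pour-el}, the consistency of each candidate extension together with the independence guaranteed by (c) rules out both nontrivial cases, so $U^\ast = U$ and $\Phi(k^\ast)$ converges to a value $\phi^\ast$ that is independent of $U$, whence $\phi^\ast \notin {\sf W}_i \cup {\sf W}_j$ and, by effective if-essential $\mc X$-incompleteness, $\phi^\ast \in \mc X$. I then read off the total function $\Phi'$ witnessing double $\mc X$-generativity: for arbitrary disjoint ${\sf W}_i, {\sf W}_j$ the three clauses hold because the only surviving case places $\phi^\ast$ outside both sets and inside $\mc X$, while the biconditionals hold vacuously in the cases the diagonalization has excluded. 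The delicate point, exactly as in the earlier proof, is ensuring the witnessing function comes out \emph{total}, which is handled by dovetailing the search for $\Phi(k^\ast)$ against a search for a failure of the weak biseparation hypothesis and outputting a fixed default sentence if the latter is found first.
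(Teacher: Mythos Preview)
Your (a) $\To$ (b) and (b) $\To$ (c) are essentially right; just note that to match the clauses of theory-generativity you need $\Psi(i) := \Phi(k_0,k_1)$ with ${\sf W}_{k_0} = V_{\mf p}$ and ${\sf W}_{k_1} = V_{\mf r}$, not the reversed order you wrote.

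The substantive gap is in (c) $\To$ (a). You are transplanting the argument of Theorem~\ref{pour-el}, but double $\mc X$-generativity is strictly stronger than effective $\mc X$-inseparability: the only hypothesis on ${\sf W}_i,{\sf W}_j$ is disjointness, with \emph{no} weak-biseparation condition relating them to $U_{\mf p},U_{\mf r}$. Hence your claim that ``the consistency of each candidate extension \dots\ rules out both nontrivial cases, so $U^\ast = U$'' is unfounded: nothing prevents, say, $\Phi(k^\ast)\in{\sf W}_i$ while $\Phi(k^\ast)\in U_{\mf p}$, in which case your candidate extension is inconsistent and you cannot invoke (c) to reach a contradiction. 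Likewise your totality patch via ``failure of the weak biseparation hypothesis'' is a red herring --- there is no such hypothesis here. (Also, your stated adjunction --- add $\Phi(k^\ast)$ when it lands in ${\sf W}_j$ --- is the wrong way round for your stated goal of forcing that value into $U_{\mf p}$.)

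The correct analysis does not \emph{exclude} the nontrivial cases but \emph{uses} them. With the assignment ``adjoin $\phi^\ast$ if $\phi^\ast\in{\sf W}_i$; adjoin $\neg\,\phi^\ast$ if $\phi^\ast\in{\sf W}_j$'', one argues contrapositively: were $U^\ast$ consistent, (c) would make $\phi^\ast$ independent of $U^\ast$, contradicting $U^\ast\vdash\phi^\ast$ (resp.\ $U^\ast\vdash\neg\,\phi^\ast$). So $U^\ast$ is inconsistent, giving $\phi^\ast\in U_{\mf r}$ (resp.\ $U_{\mf p}$); combined with ${\sf W}_i\cap{\sf W}_j=\emptyset$ and the consistency of $U$, both biconditionals then hold \emph{non-vacuously}. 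The paper implements exactly this mechanism, using witness comparison against $U_{\mf p}\cup{\sf W}_x$ and $U_{\mf r}\cup{\sf W}_y$ to organise the case split. Totality is automatic: if $\Phi(k^\ast)$ diverged, the ``otherwise'' branch gives $U^\ast=U$, which is consistent, and (c) then forces convergence.
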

\begin{proof}
``(a) to (b)''. Suppose that a total \compu\ function $\Phi$ witnesses the double $\mc X$-generativity of $U$. 
Let $V$ be any consistent \ce~theory  with index $i$. 
We can effectively find $k_0$ and $k_1$ from $i$ such that ${\sf W}_{k_0} =V_{\mf p}$ and ${\sf W}_{k_1} = V_{\mf r}$. Let $\Psi(i) := \Phi(k_0,k_1)$.
It is immediate that $\Phi$ witnesses that $U$ is $\mc X$-theory-generative.

\medskip
``(b) to (c)''. Suppose $\Phi$ witnesses that $U$ is $\mc X$-theory-generative and let $V$ be a consistent \ce~theory that if-extends $U$ with index $i$.
Since the first two cases of $\mc X$-theory-generativity cannot be active, it follows that 
$\Phi(i)\not\in V_{\mf p} \cup V_{\mf r}$ and
$\Phi(i) \in \mc X$.

\medskip
``(c) to (a)''. Suppose that $U$ is effectively if-essentially $\mc X$-incomplete. 
Let $\Phi$ be a partial \compu\ function that witnesses the effective if-essential $\mc X$-incompleteness of $U$. 
We may assume that $\Phi(k)$ converges if ${\sf W}_k$ is an if-extension of $U$, whether ${\sf W}_k$ is consistent or not. 
By the Recursion Theorem with parameters, there exists a \compu\ function $\Psi(x, y)$ such that, setting $\Phi^*(x, y) : = \Phi(\Psi(x, y))$, we have:
{\small
\[
	{\sf W}_{\Psi(x, y)} = \begin{cases} U \cup \{\Phi^*(x, y)\}  & \ \text{if}\ \bigl(\Phi^*(x, y) \in (U_{\mf p} \cup {\sf W}_{x}) \bigr) \leq \bigl(\Phi^*(x, y) \in (U_{\mf r} \cup {\sf W}_{y}) \bigr),\\
	U \cup \{\neg \,\Phi^*(x, y)\}  & \ \text{if}\ \bigl(\Phi^*(x, y) \in (U_{\mf r} \cup {\sf W}_{y}) \bigr) < \bigl(\Phi^*(x, y) \in (U_{\mf p} \cup {\sf W}_{x}) \bigr), \\
	U & \ \text{otherwise}. \end{cases}
\]
}%
Since ${\sf W}_{\Psi(i, j)}$ is an if-essential extension of $U$ for all $i$ and $j$, we find that $\Phi^*(i, j)$ is a total \compu\ function. 

We show that $\Phi^*$ witnesses the double $\mc X$-generativity of $U$. 
Let $i$ and $j$ be such that ${\sf W}_i \cap {\sf W}_j = \emptyset$ and let $\phi^\ast := \Phi^\ast(i,j)$ 

\begin{itemize}
	\item Suppose $\bigl(\phi^* \in (U_{\mf p} \cup {\sf W}_{i}) \bigr) \leq \bigl(\phi^* \in (U_{\mf r} \cup {\sf W}_{j}) \bigr)$ holds. 
    Then, ${\sf W}_{\Psi(i, j)} = U \cup \{\phi^*\}$. 
 Since ${\sf W}_{\Psi(i, j)} \vdash \Phi(\Psi(i, j))$, we have that ${\sf W}_{\Psi(i, j)} = U \cup \{\phi^*\}$ is inconsistent. 
So, $\phi^* \in U_{\mf r}$. 
Since $U_{\mf p} \cap U_{\mf r} 
= \emptyset$ and $\phi^* \in U_{\mf p} \cup {\sf W}_i$, we obtain 
$\phi^* \in U_{\mf r} \cap {\sf W}_i$. 
 
 	\item Suppose $\bigl(\phi^* \in (U_{\mf r} \cup {\sf W}_{j}) \bigr) < \bigl(\phi^* \in (U_{\mf p} \cup {\sf W}_{i}) \bigr)$ holds. 
    As above, it is shown that $\phi^* \in U_{\mf p} \cap {\sf W}_j$. 

    \item Otherwise, $\phi^* \notin U_{\mf p} \cup {\sf W}_j$ and $\phi^* \notin U_{\mf r} \cup {\sf W}_i$. 
    Since ${\sf W}_{\Psi(i, j)} = U$ is a consistent if-essential extension of $U$, we have $\phi^* = \Phi(\Psi(i, j)) \in \mc X$. 
\end{itemize}
A simple exercise in propositional logic now  shows  that $\Phi^*$ witnesses the double $\mc X$-generativity of $U$.
\end{proof}

\subsection{Orey-Sentences of Extensions of Peano Arithmetic}\label{orey}

In this subsection we treat Orey-sentences for extensions of {\sf PA}. The results of Section~\ref{conssmurf} will extend these results, but
it is nice to see the simple case first.

Let $U$ be any consistent \ce~extension of Peano Arithmetic. An \emph{Orey-sentence} of $U$ is a sentence $O$, such that
$U \rhd (U+O)$ and $U \rhd (U+ \neg\, O)$. Clearly, an Orey sentence $O$ of $U$ is independent of $U$.
We also note that the Orey property is extensional in the sense that it only depends on the theorems of the
given theory.

Here is one way  to construct an Orey-sentence for $U$. Let ${\sf W}_i$ be an enumeration of the axioms of $U$.
We define ${\sf W}_{i,n}$ as the theory axiomatised by the first $n$ axioms enumerated in ${\sf W}_i$. We define
$\apr_i \phi :\iff \exists x\, (\opr_{{\sf W}_{i,x}} \phi \wedge \oco_{{\sf W}_{i,x}} \top)$. Here $\opr_{{\sf W}_{i,x}}$ stands for provability in ${\sf W}_{i,x}$ and 
$\oco$ stands for $\neg\opr\neg$, so, $\oco_{W_{i,n}}\top$ arithmetizes the consistency of ${\sf W}_{i,n}$.
We write $\aco$ for $\neg\apr\neg$. We find that $\apr_i$ satisfies the modal laws of {\sf K}. Moreover,
we have the seriality axiom {\sf D}, i.e. $\aco_i\top$. Finally, 
 we can prove $\aco_i\phi\rhd_U \phi$. See~\cite{fefe:arit60} or~\cite{viss:inte18}.

We find $\gamma_i$ with ${\sf PA} \vdash \gamma_i \iff \neg\, \apr_i \gamma_i$.
We claim that $\gamma_i$ is an Orey-sentence of $U$.
We have, temporarily omitting the subscripts $i$ and $U$:
\begin{eqnarray*}
     \gamma & \vdash &  \neg\,\apr \gamma \\
    & \vdash & \aco \neg\, \gamma \\
    & \rhd & \neg\, \gamma \\
    \neg \, \gamma & \vdash & \apr \gamma \\
    & \vdash & \aco \gamma \\
    & \rhd & \gamma
\end{eqnarray*}
Since, we have $\gamma \rhd \neg\, \gamma$ and $\neg\,\gamma \rhd \neg\, \gamma$, we find, using a disjunctive interpretation,
$\top \rhd \neg\, \gamma$. Similarly, we find $\top \rhd  \gamma$. Thus, $\gamma_i$ is an Orey-sentence of $U$ as promised.

\begin{rem}
    The Orey-sentences $\gamma_i$ produced in our example are all known to be true. We can also use $\apr_i$ to build
    $U$-internally a Henkin interpretation of $U$. This interpretation comes with a truth predicate ${\sf H}_i$.
    The Liar sentence $\lambda_i$ of ${\sf H}_i$ is also an Orey-sentence of $U$. However, in this case, it is unknown whether
    $\lambda_i$ or $\neg\,\lambda_i$ is true. 

    We note that, in the real world, the Henkin construction just depends on the theorems of the given theory, not on the
    axiomatisation. It \emph{does} depend on the chosen enumeration of sentences. Thus, as long as we keep the enumeration fixed,
    the truth-value of $\lambda_i$ remains the same when we run through $i$ that enumerates axiom sets of {\sf PA}.
\end{rem}

Let $\mathbb O$ be the function that assigns to sets of sentences $\mc A$ in the signature of arithmetic the Orey-sentences of the theory axiomatised
by $\mc A$. Note that it is possible that there  are no such Orey-sentences, so the empty set will be in the range of this function. 
We have shown: 
\begin{thm}
  ${\sf PA}$ is effectively essentially $\mathbb O$-incomplete.  
\end{thm}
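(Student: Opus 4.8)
The plan is to show that $\PA$ is effectively essentially $\mathbb O$-incomplete by exhibiting a partial \compu\ function $\Phi$ that, given an index $i$ of a consistent \ce~extension ${\sf W}_i$ of $\PA$, outputs an Orey-sentence of ${\sf W}_i$. The preceding discussion has already done almost all of the work: for each index $i$ it constructs an explicit sentence $\gamma_i$, obtained by a fixed-point (diagonal) argument from the provability-like operator $\apr_i$, and shows that $\gamma_i$ is an Orey-sentence of the theory enumerated by ${\sf W}_i$. So the core of the proof is to observe that this construction is \emph{uniform} in $i$.

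First I would note that the operator $\apr_i$ is defined purely arithmetically from the enumeration ${\sf W}_i$, via the finite approximations ${\sf W}_{i,x}$, so a Gödel number for the formula $\apr_i\phi$ can be computed uniformly from $i$. Then, applying the Gödel–Carnap fixed-point lemma effectively, I would produce $\gamma_i$ with $\PA \vdash \gamma_i \iff \neg\,\apr_i\gamma_i$, again uniformly in $i$; this yields a total \compu\ function $i \mapsto \gn{\gamma_i}$. Setting $\Phi(i) := \gn{\gamma_i}$, I would invoke the earlier verification (the derivation chain showing $\gamma_i \rhd \neg\,\gamma_i$, $\neg\,\gamma_i \rhd \neg\,\gamma_i$, and the symmetric pair, followed by the disjunctive-interpretation step giving $\top \rhd \gamma_i$ and $\top \rhd \neg\,\gamma_i$) to conclude that $\gamma_i$ is an Orey-sentence of ${\sf W}_i$ whenever ${\sf W}_i$ is a consistent extension of $\PA$.

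It remains to check the two formal requirements of effective essential $\mathbb O$-incompleteness: that $\Phi(i) \in \mathbb O({\sf W}_i)$ and that $\Phi(i)$ is independent of ${\sf W}_i$. The first holds by definition of $\mathbb O$, since $\gamma_i$ is an Orey-sentence of the theory axiomatised by ${\sf W}_i$. The second is immediate, as remarked earlier, because any Orey-sentence is independent of its base theory. Note that $\Phi$ is in fact total here, which is harmless: the definition only requires correct behaviour on indices of consistent extensions.

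The main obstacle I anticipate is not mathematical depth but the bookkeeping of uniformity and the subtle dependence on presentation. The Orey construction via $\apr_i$ genuinely depends on the \emph{enumeration} ${\sf W}_i$ of axioms, not merely on the theory's theorems — the consistency statements $\oco_{{\sf W}_{i,x}}\top$ refer to finite initial segments of that enumeration — so I must be careful that the modal laws ($\mathsf K$, seriality $\mathsf D$, and $\aco_i\phi \rhd_U \phi$) used in the verification hold for \emph{every} enumeration of a consistent extension of $\PA$, which is exactly what the cited references establish. Once the operator's properties are secured uniformly, the passage to an effective witness function is routine, and the theorem follows.
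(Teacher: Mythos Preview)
Your proposal is correct and matches the paper's approach exactly: the paper presents the construction of $\gamma_i$ and its verification as an Orey-sentence \emph{before} the theorem statement and then simply writes ``We have shown'' in lieu of a formal proof, so the theorem is meant to be read as a summary of that preceding discussion. Your added remarks about uniformity in $i$ (the effective computability of $\apr_i$ and of the fixed point $\gamma_i$) make explicit precisely what the paper leaves implicit, and your observation about the enumeration-dependence of $\apr_i$ is apt but, as you note, harmless for the argument.
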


Applying Corollary~\ref{cor_pour-el_2}, we find:

\begin{thm}\label{lolsmurf}
    {\sf PA} is effectively $\mathbb O({\sf PA})$-inseparable and, thus,
effectively essentially $\mathbb O({\sf PA})$-incomplete.
\end{thm}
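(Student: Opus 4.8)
The plan is to obtain Theorem~\ref{lolsmurf} as a direct instance of the machinery already assembled, specifically by combining the preceding theorem (that ${\sf PA}$ is effectively essentially $\mathbb O$-incomplete) with Corollary~\ref{cor_pour-el_2}. The point is that all the real work has been done: the construction of the Orey-sentences $\gamma_i$ from an index $i$ of the axiom set is uniform and computable, so the map sending $i$ to (a code for) $\gamma_i$ is a partial computable function witnessing effective essential $\mathbb O$-incompleteness, where $\mathbb O(\mc A)$ is the set of Orey-sentences of the theory axiomatised by $\mc A$.

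First I would recall that effective essential $\mc F$-incompleteness is a special case of effective if-essential $\mc F$-incompleteness (the if-version only quantifies over finite extensions, so it is implied whenever the full essential version holds). Hence the previous theorem gives that ${\sf PA}$ is effectively if-essentially $\mathbb O$-incomplete. Now I apply Corollary~\ref{cor_pour-el_2} with $U := {\sf PA}$ and $\mc F := \mathbb O$. The corollary's conclusion is precisely that ${\sf PA}$ is effectively $\mathbb O({\sf PA})$-inseparable \emph{and} effectively if-essentially $\mathbb O({\sf PA})$-incomplete. Since the construction of each $\gamma_i$ does not depend on the particular enumeration beyond yielding an Orey-sentence of the \emph{same} theory ${\sf PA}$, the constant value $\mathbb O({\sf PA})$ of the function at the argument ${\sf PA}$ is exactly the set of all Orey-sentences of ${\sf PA}$, which is what the statement asserts.

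The second clause of the theorem, that ${\sf PA}$ is effectively essentially $\mathbb O({\sf PA})$-incomplete, does not follow immediately from the if-version delivered by the corollary, so I would close the small gap by appealing to Corollary~\ref{cor_pour-el}: with $\mc X := \mathbb O({\sf PA})$, the four conditions (a)--(d) are equivalent for the consistent c.e.~theory ${\sf PA}$, and effective $\mathbb O({\sf PA})$-inseparability (condition (a)) is equivalent to effective essential $\mathbb O({\sf PA})$-incompleteness (condition (c)). Thus from the inseparability already obtained, the full essential incompleteness with witnesses confined to $\mathbb O({\sf PA})$ follows.

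The only place requiring genuine attention is the verification that the witnessing function is honestly computable and lands in the claimed target set. The potential obstacle is intensional: the functions in Corollary~\ref{cor_pour-el_2} act on indices of the extensions, whereas the Orey-construction above was described as taking an enumeration ${\sf W}_i$ of the axioms of ${\sf PA}$; I would therefore note that the fixed-point equation ${\sf PA} \vdash \gamma_i \iff \neg\,\apr_i\gamma_i$ produces $\gamma_i$ primitive recursively in $i$ by the standard diagonal lemma with parameters, and that the verification $\gamma_i$ is an Orey-sentence (carried out in the preceding display using $\aco_i\phi \rhd_U \phi$ and a disjunctive interpretation) goes through uniformly. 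Granting this uniformity, the theorem is immediate from the two cited corollaries.
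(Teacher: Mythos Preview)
Your proposal is correct and follows essentially the same route as the paper: the paper simply says ``Applying Corollary~\ref{cor_pour-el_2}, we find:'' and then states the theorem, with the ``thus'' in the statement implicitly invoking the equivalence of Corollary~\ref{cor_pour-el} to pass from $\mc X$-inseparability to effective essential $\mc X$-incompleteness---exactly as you spell out. Your final paragraph on uniformity of the $\gamma_i$-construction is unnecessary here, since that is the content of the \emph{preceding} theorem (effective essential $\mathbb O$-incompleteness of {\sf PA}), which you are entitled to take as given; Theorem~\ref{lolsmurf} itself requires no further computability check beyond invoking the two corollaries.
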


Theorem~\ref{lolsmurf} illustrates the important insight that \emph{independence, \emph{per se}, has nothing to do with strength}.
Of course, we are familiar with this point e.g. from the well-known results on, e.g., the continuum hypothesis which is an Orey-sentence
of {\sf ZF} that is independent of many extensions that have to do with strength. However, Theorem~\ref{lolsmurf} has a somewhat different
flavor in that it presents a systematic construction of such sentences for a wide range of theories.

\subsection{Consistency and Conservativity}\label{conssmurf}

We say a formula $\alpha(x)$ is a \textit{binumeration} of a theory $U$ iff for any $U$-sentence $\varphi$, if $\varphi \in U$, then $U \vdash \alpha(\gn{\varphi})$; and if $\varphi \notin U$, then $U \vdash \neg\, \alpha(\gn{\varphi})$. 
For each binumeration $\alpha(x)$ of $U$, we can naturally construct a formula $\Prf_{\alpha}(x, y)$ saying that $y$ is the code of a proof of a formula with the code $x$ in the theory defined by $\alpha$ (cf.~\cite{fefe:arit60}). 
Let $\Con_\alpha$ be the consistency statement $\neg\, \exists y\, \Prf_\alpha(\gn{\bot}, y)$ of $U$, where $\bot$ is some $U$-refutable sentence. 
Let $\mathbb C$ be the function that assigns to sets of sentences $\mc A$ in the signature of arithmetic 
the set of all sentences of the form $\Con_\alpha$, where $\alpha$ is a binumeration of some \compu\ axiomatisation 
of an extension $A$ of $\PA$ axiomatised by $\mc A$.
It is known that $\PA$ is effectively essentially $\mathbb{C}$-incomplete (cf.~Lindstr{\"o}m \cite[Theorem 2.8]{Lin03}). 
Thus, $\PA$ is effectively $\mathbb{C}(\PA)$-inseparable and effectively essentially $\mathbb{C}(\PA)$-incomplete. 

For each $n \geq 1$, let $\mathbb D_n$ be the function that assigns to sets of sentences $\mc A$ in the signature of arithmetic the set of all sentences $\phi$ such that $\phi$ and $\neg\, \phi$ are both $\Pi_n$-conservative over the theory axiomatised by $\mc A$. 
It is well-known that for any consistent \ce~extension $U$ of $\PA$, $\mathbb D_1(U) = \mathbb{O}(U)$ (cf.~\cite[Theorem 6.6]{Lin03}). 
For every pair of functions $\mc F$ and $\mc G$ from sets of sentences to sets of sentences, 
let $\mc F \cap \mc G$ be the function defined by $(\mc F \cap \mc G)(\mc A) = \mc F(\mc A) \cap \mc G(\mc A)$. 

For each binumeration $\alpha$ of an extension $U$ of $\PA$, the formula $\Prf_{\alpha}^{\Sigma_n}(x, y)$ is defined as
\[
    \exists u \leq y\, (\Sigma_n(u) \land {\sf true}_{\Sigma_n}(u) \land \Prf_\alpha(u \dot{\to} x, y)). 
\]
Then, we have the following: 

\begin{prop}[The small reflection principle (cf.~{\cite[Lemma 5.1(ii)]{Lin03}})]
Let $U$ be any \compu\ extension of $\PA$ and $\alpha$ be a binumeration of $U$. 
Then, for any sentence $\varphi$ and natural number $m$, we have \[U \vdash \exists y < \num{m}\, \Prf_{\alpha}^{\Sigma_n}(\gn{\varphi}, y) \to \varphi.\] 
\end{prop}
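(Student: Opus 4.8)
The plan is to prove the principle for each fixed standard $m$ by peeling off the bounded quantifiers into a finite conjunction of instances with numeral proof-codes. Since $m$ is standard, $\PA$ proves $\exists y < \num m\, \Prf_{\alpha}^{\Sigma_n}(\gn{\varphi},y) \leftrightarrow \bigvee_{y<m}\Prf_{\alpha}^{\Sigma_n}(\gn{\varphi},\num y)$, so it suffices to establish $U \vdash \Prf_{\alpha}^{\Sigma_n}(\gn{\varphi},\num y)\to\varphi$ for every standard $y<m$. Unfolding the definition of $\Prf_{\alpha}^{\Sigma_n}$ and again using that the bound $\num y$ is a numeral, the antecedent $\Prf_{\alpha}^{\Sigma_n}(\gn{\varphi},\num y)$ is $\PA$-provably equivalent to the finite disjunction over standard $u\le y$ of $\Sigma_n(\num u)\wedge{\sf true}_{\Sigma_n}(\num u)\wedge\Prf_\alpha(\num u\dot{\to}\gn{\varphi},\num y)$. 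Hence everything reduces to showing, for each standard $u\le y$, that $U$ proves this conjunction implies $\varphi$.

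The workhorse is the following observation about numeral proof-codes: for every standard sentence $\theta$ and standard number $y$, $U\vdash\Prf_\alpha(\gn{\theta},\num y)\to\theta$. To see this, I would decide externally whether $y$ genuinely codes a $U$-derivation of $\theta$, where a line counts as an axiom iff it really lies in $U$. If it does, then $U\vdash\theta$ outright, so the implication holds. If it does not, then $U\vdash\neg\,\Prf_\alpha(\gn{\theta},\num y)$, and the implication again holds trivially. The point driving the second case is that, for the fixed numeral $\num y$, decoding the sequence coded by $y$ and checking the purely syntactic justification of each line (logical axiom, modus ponens, generalisation, correct endpoint) are $\Delta_0$ facts about standard numbers, which $\PA$---and a fortiori $\RQ$---decides correctly; and the only remaining clause, that a putative axiom-line $\psi_i$ satisfies $\alpha(\gn{\psi_i})$, is decided by $U$ because $\alpha$ is a binumeration: $\psi_i\notin U$ gives $U\vdash\neg\,\alpha(\gn{\psi_i})$. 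Thus $U$ settles the truth value of $\Prf_\alpha(\gn{\theta},\num y)$, affirmatively exactly when $y$ is a genuine derivation.

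Applying the workhorse with $\theta:=\sigma_u\to\varphi$ (the sentence coded by $u\dot{\to}\gn{\varphi}$) yields $U\vdash\Prf_\alpha(\num u\dot{\to}\gn{\varphi},\num y)\to(\sigma_u\to\varphi)$, after noting that $\PA$ evaluates the numeral term $\num u\dot{\to}\gn{\varphi}$ to $\gn{\sigma_u\to\varphi}$. It then remains to discharge $\sigma_u$ using the truth conjunct. If $u$ really codes a $\Sigma_n$-sentence $\sigma_u$, the partial truth predicate satisfies $\PA\vdash{\sf true}_{\Sigma_n}(\num u)\to\sigma_u$ (indeed ${\sf true}_{\Sigma_n}(\gn{\sigma_u})\leftrightarrow\sigma_u$), so chaining the implications gives $U\vdash(\,{\sf true}_{\Sigma_n}(\num u)\wedge\Prf_\alpha(\num u\dot{\to}\gn{\varphi},\num y)\,)\to\varphi$; and if $u$ does not code a $\Sigma_n$-sentence, then $U\vdash\neg\,\Sigma_n(\num u)$ and the conjunction is refuted, so the instance holds vacuously. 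Conjoining over the finitely many standard $u\le y$ and then over the finitely many standard $y<m$ gives the displayed conclusion.

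I expect the main obstacle to be the careful treatment of the clause carrying the binumeration $\alpha$ inside $\Prf_\alpha(\gn{\theta},\num y)$: everything else is the routine $\Delta_0$-decidability of proof-checking for a standard code, but soundness in the negative case genuinely relies on $\alpha$ being a \emph{binumeration} (a mere numeration would let $U$ fail to refute $\alpha(\gn{\psi_i})$ for a non-axiom $\psi_i$, breaking the argument). I would also take care to invoke the construction of $\Prf_\alpha$ from $\alpha$ in \cite{fefe:arit60}, so that its defining clauses are precisely logical-axiom / inference-rule / $\alpha$-axiom, since the decomposition in the workhorse step depends on exactly that shape.
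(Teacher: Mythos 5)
Your proof is correct. The paper itself does not prove this proposition at all—it imports it from Lindstr\"om \cite[Lemma 5.1(ii)]{Lin03}—and your argument (finite disjunctions over the standard bounds, deciding $\Prf_\alpha(\gn{\theta},\num{y})$ inside $U$ at standard proof codes via $\Delta_0$-completeness together with the binumeration property, and the Tarski biconditionals for ${\sf true}_{\Sigma_n}$) is precisely the standard proof of that cited lemma, including the correct identification that binumeration, rather than mere numeration, is what makes the refutation case go through.
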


\begin{thm}
  For each $n \geq 1$, ${\sf PA}$ is effectively essentially $\mathbb{C} \cap \mathbb{D}_n$-incomplete.  
\end{thm}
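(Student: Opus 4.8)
The plan is to produce, uniformly and effectively from an index $i$ of a consistent \ce{} extension $U=\W{i}$ of $\PA$, a binumeration $\alpha$ of a \compu{} axiomatisation of $U$ such that $\Con_\alpha$ is independent of $U$ while both $\Con_\alpha$ and $\neg\,\Con_\alpha$ are $\Pi_n$-conservative over $U$. Setting $\Phi(i):=\Con_\alpha$ then gives the required partial \compu{} witness, since $\Con_\alpha\in\mathbb C(U)$ by construction and $\Con_\alpha\in\mathbb D_n(U)$ by the conservativity. Two preliminary remarks organise the reduction. First, $\mathbb D_n(U)$ is closed under $U$-provable equivalence, so it is enough that $\Con_\alpha$ be $U$-provably equivalent to a doubly $\Pi_n$-conservative sentence. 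Second, $\alpha$ must be genuinely intensional: a $\Pi_1$ sentence that is $\Pi_n$-conservative over $U$ is already $U$-provable (take the conservativity instance to be the sentence itself), so an independent witness cannot be $\Pi_1$; we must therefore let the complexity of $\Con_\alpha$ rise above $\Pi_n$ through the choice of $\alpha$.

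The construction I would use generalises the Orey-sentence construction of Section~\ref{orey} from ordinary provability to $\Sigma_{n-1}$-oracle provability, which is the level matching $\Pi_n$-conservativity; for $n=1$ this is exactly the plain case treated there, and indeed $\mathbb D_1(U)=\mathbb O(U)$. Working with the proof predicate $\Prf_\alpha^{\Sigma_{n-1}}$ and its associated provability operator $\Prov_\alpha^{\Sigma_{n-1}}$, I would fix a \compu{} axiomatisation of $U$ and define $\alpha$ by the Recursion Theorem so that an $\alpha$-proof is guarded by $\Sigma_{n-1}$-oracle consistency, in the manner of Feferman's intensional consistency predicates: a stage of the axiomatisation contributes only as long as no smaller $\Prf^{\Sigma_{n-1}}$-proof of $\bot$ from the axioms enumerated so far has appeared. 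This self-referential choice forces $\Con_\alpha$ to behave, provably in $U$, like the level-$n$ analogue of the Orey fixed point $\gamma_i$, governed by the operator $\apr^{n-1}\phi:=\exists x\,(\Prov^{\Sigma_{n-1}}_{\W{i,x}}\phi \wedge \neg\,\Prov^{\Sigma_{n-1}}_{\W{i,x}}\bot)$ ranging over the finite subtheories $\W{i,x}$ of $U$. The whole recipe is uniform in $i$, so $i\mapsto\Con_\alpha$ is partial \compu.

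For the verification, $U\nvdash\Con_\alpha$ is G\"odel's second theorem in the same effective form already used for $\mathbb C$-incompleteness. The arithmetical core is the level-$(n-1)$ analogue of the principle $\aco_i\phi\rhd_U\phi$ of Section~\ref{orey}: the small reflection principle stated above, instantiated at level $n-1$ and applied to $\Prf_\alpha^{\Sigma_{n-1}}$, converts short $\Sigma_{n-1}$-oracle proofs into truth and thereby shows that $\Sigma_{n-1}$-oracle consistency of $U+\phi$ entails $\Pi_n$-conservativity of $\phi$ over $U$ — this is the Orey--H\'ajek--Guaspari correspondence between $\Pi_n$-conservativity and $\Sigma_{n-1}$-oracle provability (cf.~\cite{Lin03}). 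Feeding this base principle into the same modal argument as in Section~\ref{orey} — seriality of $\apr^{n-1}$, the fixed-point equation for $\Con_\alpha$, and a disjunctive interpretation — yields the level-$n$ analogues of $\top\rhd\gamma$ and $\top\rhd\neg\,\gamma$, i.e.\ that both $U+\Con_\alpha$ and $U+\neg\,\Con_\alpha$ are $\Pi_n$-conservative over $U$. In particular $U+\neg\,\Con_\alpha$ is consistent, so $U\nvdash\neg\,\Con_\alpha$ and $\Con_\alpha$ is independent, completing the verification.

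The main obstacle is that the witness must be a bona fide consistency statement (for $\mathbb C$) and simultaneously doubly $\Pi_n$-conservative (for $\mathbb D_n$), and these requirements interact only through the intensional design of $\alpha$; threading the self-reference so that $\Con_\alpha$ is forced, up to $U$-provable equivalence, to coincide with the level-$n$ Orey sentence is the technical heart. The delicate subcase is $n\ge 3$. For $n\le 2$ every finite subtheory of a consistent extension of $\PA$ is $\Pi_{n-1}$-sound, hence $\Sigma_{n-1}$-oracle consistent, so the guard never fires spuriously and $\alpha$ plainly binumerates $U$; but for $n\ge 3$ the theory $U$ may prove a false $\Pi_{n-1}$ sentence, so some finite subtheories are $\Sigma_{n-1}$-oracle inconsistent and a naive guard would eventually reject all axioms. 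The fix is to phrase both the guard and the seriality argument in terms of the $\Sigma_{n-1}$-oracle consistency of \emph{finite} subtheories, of which there are always enough that are $\Pi_{n-1}$-sound, rather than of $U$ as a whole, and to check that the reflection and modal steps survive this restriction.
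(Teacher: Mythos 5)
Your global plan coincides with the paper's: from an index $i$ of a consistent \ce~extension of $\PA$, effectively build a binumeration $\alpha$ of a \compu\ axiomatisation so that $\Con_\alpha$ is independent and doubly $\Pi_n$-conservative, and set $\Phi(i):=\Con_\alpha$. The gap is in the technical heart: the correspondence you invoke is off by one level, and this breaks the construction for every $n\geq 2$. The notion that matches $\Pi_n$-conservativity is $\Sigma_n$-oracle provability, not $\Sigma_{n-1}$-oracle provability; this is exactly why the paper works with $\Prf_\beta^{\Sigma_n}$ throughout. The reason for the level is visible in the first step of the conservativity argument: from ${\sf W}_k \cup \{\Con_\alpha\}\vdash \pi$ with $\pi\in\Pi_n$ one gets a proof of $\neg\,\Con_\alpha$ from ${\sf W}_k$ together with the sentence $\neg\,\pi$, and $\neg\,\pi$ is $\Sigma_n$; to absorb it into an oracle proof predicate (provably, under the assumption $\neg\,\pi$, via the truth predicate) the oracle must contain true $\Sigma_n$ sentences. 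Your indexing looks right only because of a degenerate coincidence at $n=1$: there the $\Sigma_1$-oracle is provably redundant by provable $\Sigma_1$-completeness, plain consistency already certifies $\Pi_1$-conservativity, and $\mathbb D_1(U)=\mathbb O(U)$. That coincidence does not propagate upward.

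Concretely, for $n\geq 2$ your operator $\apr^{n-1}$ built from $\Prf^{\Sigma_{n-1}}$ is a $\Sigma_n$ formula, so your fixed point (equivalently, your guarded $\Con_\alpha$, whose guards are Boolean combinations of $\Sigma_{n-1}$ and $\Pi_{n-1}$ formulas) has complexity $\Pi_n$. Your own second preliminary remark then refutes the construction: a $\Pi_n$ sentence that is $\Pi_n$-conservative over $U$ is $U$-provable, so your witness can never be simultaneously independent and in $\mathbb D_n(U)$. (The paper's $\Sigma_n$-oracle guards push $\Con_\alpha$ up to $\Pi_{n+1}$, escaping precisely this trap.) For a direct counterexample to your claimed correspondence at $n=2$: the Feferman--Orey sentence $\gamma_i$ of Section~\ref{orey} is a $\Pi_2$ sentence, it is an Orey sentence of $\PA$ (so $\PA$ proves the plain, equivalently $\Sigma_1$-oracle, consistency of the finite subtheories of $\PA+\gamma_i$), yet it is not $\Pi_2$-conservative, being independent. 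Two further consequences of the same error: your ``delicate subcase $n\geq 3$'' is an artifact of the wrong level --- the paper needs no soundness or oracle consistency of finite subtheories of a possibly unsound extension, since it uses the small reflection principle plus essential reflexivity (plain consistency of finite subtheories) together with a two-sided guard on $\alpha$ (inflate on short oracle proofs of $\Con_\alpha$, truncate on short oracle proofs of $\neg\,\Con_\alpha$); and your closing ``disjunctive interpretation'' step only delivers interpretability statements, which coincide with $\Pi_1$-conservativity by Orey--H\'ajek but are strictly weaker than $\Pi_n$-conservativity for $n\geq 2$, so even after correcting the oracle level you would have to replace that step by a direct conservativity computation of the kind the paper carries out.
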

\begin{proof}
Suppose that ${\sf W}_i$ is a consistent \ce~extension of $\PA$. 
By Craig's trick, we can effectively find a $k$ from $i$ such that ${\sf W}_k$ is a primitive \compu\ axiomatisation of ${\sf W}_i$. 
Let $\beta(x)$ be an effectively found primitive \compu\ binumeration of ${\sf W}_k$.
We can effectively find a formula $\alpha(x)$ such that: 
{\small
\[
    \PA \vdash \alpha(x) \leftrightarrow \Bigl( \bigl(\beta(x) \lor \exists y < x\, \Prf_{\beta}^{\Sigma_n}(\gn{\Con_\alpha}, y) \bigr) 
    \land \forall z < x\, \neg\, \Prf_{\beta}^{\Sigma_n}(\gn{\neg\, \Con_\alpha}, z) \Bigr).
\]
}
We show that $\Con_\alpha$ is $\Pi_n$-conservative over ${\sf W}_k$. 
Let $\pi$ be any $\Pi_n$ sentence such that ${\sf W}_k \cup \{\Con_\alpha\} \vdash \pi$. 
Then, ${\sf W}_k \cup \{\neg\, \pi\} \vdash \neg\, \Con_\alpha$. 
There exists a number $q$ such that $\PA \cup \{\neg\, \pi\} \vdash \Prf_{\beta}^{\Sigma_n}(\gn{\neg\, \Con_\alpha}, \num{q})$. 
Thus, $\PA \cup \{\neg\, \pi\} \vdash \alpha(x) \to x \leq \num{q}$. 
We have: {\small
\begin{align*}
	\PA \cup \{\neg\, \pi\}  \vdash \forall y < \num{q}\, \neg\, \Prf_{\beta}^{\Sigma_n}(\gn{\Con_\alpha}, y) & \to 
 \Bigl (\alpha(x) \to    \bigl( x\leq \num q \;\wedge \\
& \hspace*{1cm}
 \forall y < x\, \neg\, \Prf_{\beta}^{\Sigma_n}(\gn{\Con_\alpha}, y) \bigr)\Bigr) \\
 &
 \to \bigl(\alpha(x) \to (\beta(x) \land x \leq \num{q})\bigr) \\
& \to (\Con_{\beta \leq \num{q}} \to \Con_\alpha). \\
& \to \Con_\alpha
\end{align*}
}%
The last step uses the fact that $\beta(x)$ a is binumeration of ${\sf W}_k$ in combination with the the essential reflexiveness of $\PA$.
By combining this with the small reflection principle, we obtain
${\sf W}_k \cup \{\neg\, \pi\} \vdash \Con_\alpha$.
Since ${\sf W}_k \cup \{\Con_\alpha\} \vdash \pi$, we conclude ${\sf W}_k \vdash \pi$. 

We show that $\neg\, \Con_\alpha$ is also $\Pi_n$-conservative over ${\sf W}_k$.
Let $\pi$ be any $\Pi_n$ sentence such that ${\sf W}_k \cup \{\neg\, \Con_\alpha\} \vdash \pi$. 
Then, ${\sf W}_k \cup \{\neg\, \pi\} \vdash \Con_\alpha$. 
There exists a number $p$ such that $\PA \cup \{\neg\, \pi\} \vdash \Prf_{\beta}^{\Sigma_n}(\gn{\Con_\alpha}, \num{p})$. 
For each $q > p$, we have $\PA \cup \{\neg\, \pi\} \vdash \exists y < \num{q}\, \Prf_{\beta}^{\Sigma_n}(\gn{\Con_\alpha}, y)$. 
We obtain
\[
    \PA \cup \{\neg\, \pi\} \vdash \forall z < \num{q}\, \neg\, \Prf_{\beta}^{\Sigma_n}(\gn{\neg\, \Con_\alpha}, z) \to \alpha(\num{q}).
\]
So, for a sufficiently large $q > p$, we have 
\[
	\PA \cup \{\neg\, \pi\} \vdash \forall z < \num{q}\, \neg\, \Prf_{\beta}^{\Sigma_n}(\gn{\neg\, \Con_\alpha}, z) \to \neg\, \Con_\alpha.
\] 
By combining this with the small reflection principle, we have ${\sf W}_k \cup \{\neg\, \pi\} \vdash \neg\, \Con_\alpha$. 
Since ${\sf W}_k \cup \{\neg\, \Con_\alpha\} \vdash \pi$, we conclude ${\sf W}_k \vdash \pi$. 

We have proved that $\Con_\alpha \in \mathbb{D}_n({\sf W}_i)$. 
Consequently, we have that $\Con_{\alpha}$ is independent of ${\sf W}_i$. 

By $\Pi_n$-conservativity, we obtain that for each $m \in \omega$, 
\begin{itemize}
    \item ${\sf W}_k \vdash \forall y < \num{m}\, \neg\, \Prf_{\beta}^{\Sigma_n}(\gn{\Con_\alpha}, y)$ and
    \item ${\sf W}_k \vdash \forall z < \num{m}\, \neg\, \Prf_{\beta}^{\Sigma_n}(\gn{\neg\, \Con_\alpha}, z)$. 
\end{itemize}
Hence, we have ${\sf W}_k \vdash \alpha(\num{m}) \leftrightarrow \beta(\num{m})$. 
This means that $\alpha(x)$ is also a binumeration of ${\sf W}_k$, and, thus,
$\Con_{\alpha} \in \mathbb C({\sf W}_i)$. 
We take $\Phi(i) : = \Con_\alpha$. 
Then, $\Phi$ witnesses the effective essential $\mathbb{C} \cap \mathbb{D}_n$-incompleteness of $\PA$. 
\end{proof} 

\begin{cor}
$\PA$ is effectively $(\mathbb{C} \cap \mathbb{D}_n)(\PA)$-inseparable and effectively essentially $(\mathbb{C} \cap \mathbb{D}_n)(\PA)$-incomplete.
\end{cor}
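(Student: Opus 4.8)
The plan is to combine the theorem just established with the two Pour-El corollaries; essentially no new work is required. The theorem shows that for each $n \geq 1$, $\PA$ is effectively essentially $\mathbb{C}\cap\mathbb{D}_n$-incomplete, witnessed by some partial \compu\ function $\Phi$ that, on an index $i$ of any consistent \ce~extension of $\PA$, returns a sentence in $(\mathbb{C}\cap\mathbb{D}_n)({\sf W}_i)$ independent of ${\sf W}_i$.

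First I would observe that effective essential $\mc F$-incompleteness entails effective if-essential $\mc F$-incompleteness: the finite extensions of $\PA$ form a sub-collection of all consistent \ce~extensions, so the very same witnessing function $\Phi$ already does what is required on indices of consistent finite extensions. Hence $\PA$ is effectively if-essentially $\mathbb{C}\cap\mathbb{D}_n$-incomplete.

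Next I would apply Corollary~\ref{cor_pour-el_2} with $\mc F := \mathbb{C}\cap\mathbb{D}_n$ and $U := \PA$. This immediately yields that $\PA$ is effectively $(\mathbb{C}\cap\mathbb{D}_n)(\PA)$-inseparable, which is the first conjunct of the statement. Note that no effectivity or \ce\ assumption on $\mc F$ (equivalently, on the set $(\mathbb{C}\cap\mathbb{D}_n)(\PA)$) is needed here, since Theorem~\ref{pour-el} and Corollary~\ref{cor_pour-el_2} place no such constraint on the witness-class.

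For the second conjunct, I would set $\mc X := (\mathbb{C}\cap\mathbb{D}_n)(\PA)$, a fixed set of sentences. By what has just been shown, $\PA$ is effectively $\mc X$-inseparable, so the implication (a)~$\To$~(c) of Corollary~\ref{cor_pour-el} (applied with this constant $\mc X$) gives that $\PA$ is effectively essentially $\mc X$-incomplete, i.e.\ effectively essentially $(\mathbb{C}\cap\mathbb{D}_n)(\PA)$-incomplete. There is no genuine obstacle: all the substance sits in the preceding theorem, and the only points to keep straight are that ``essentially'' specialises to ``if-essentially'' and that passing from a variable witness-function $\mc F$ to its constant value $(\mathbb{C}\cap\mathbb{D}_n)(\PA)$ lets us re-enter the constant-$\mc X$ equivalences of Corollary~\ref{cor_pour-el}.
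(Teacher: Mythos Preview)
Your proposal is correct and follows essentially the same route as the paper: the paper treats this corollary as an immediate application of Corollary~\ref{cor_pour-el_2} (to obtain effective $(\mathbb{C}\cap\mathbb{D}_n)(\PA)$-inseparability), together with the equivalence (a)~$\Leftrightarrow$~(c) of Corollary~\ref{cor_pour-el} to pass from inseparability back to effective essential incompleteness, exactly as you do. Your remark that effective essential $\mc F$-incompleteness trivially yields the if-version is the only step you make explicit beyond what the paper spells out.
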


\section{Effective ef-essential Incompleteness}\label{sec_ef}
Effective forms of incompleteness employ presentations of the extensions considered.
This is necessitated by the fact that our witnessing partial \compu\ functions need finite objects to operate on.
In the finite case,
we do have an alternative available to presenting an extension by a \ce~index. We can simply specify the sentence
we extend with. 
In this section, we study this notion of effective incompleteness for finite extensions and a variant. 
We consider the relation of these two notions to if-essential incompleteness.

\begin{defn}
Consider a \ce~theory $U$. We define:
\begin{itemize}
\item 
 $U$ is \textit{effectively ef-essentially $\mc F$-incomplete} iff 
 there exists a \compu\ function $\Phi$, such that, for any sentence $\phi$, if $\adj U\phi$ is consistent, 
 then $\Phi(\phi)$ is independent of $\adj U \phi$ and $\Phi(\phi) \in \mc F(\adj U\phi)$. 
    \item
$U$ is \textit{effectively ef-essentially $\mc F$-incomplete w.r.t.~$N$ and $\xi$} iff
$N$ is an interpretation of $\RR$ in $U$ and, for any sentence $\phi$, if $\adj U \phi$ is consistent, 
then $\xi(\gn{\phi})$ is independent of $\adj U \phi$ and $\xi(\gn{\phi}) \in \mc F(\adj U \phi)$.
Here the numerals are the numerals provided by $N$. To avoid heavy notation, we pretend that $N$ is one-dimensional. Of course, this is
inessential.
 \end{itemize}
\end{defn}

The notion of effective ef-essential incompleteness was studied by Jones~\cite{Jones69} as the name \textit{effective nonfinite completability}. 
We have the following simple insight.
\begin{thm}\label{if_to_ef}
Every effectively if-essentially $\mc F$-incomplete theory is effectively ef-essentially $\mc F$-incomplete.
\end{thm}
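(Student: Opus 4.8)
The plan is to translate the extensional presentation of a finite extension into an intensional one and then invoke the hypothesis directly. The key observation is that the two notions differ only in how a finite extension is presented to the witnessing function: the ef-notion feeds the function a sentence $\phi$ (so that the extension is $\adj U \phi$), whereas the if-notion feeds it an index $i$ of a \ce~set ${\sf W}_i$ that happens to be a finite extension of $U$. So the entire content is that we can pass effectively from a sentence $\phi$ to an index of a \ce~presentation of $\adj U \phi$, and that this presentation is a consistent finite extension of $U$ exactly when $\adj U \phi$ is consistent.

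Concretely, suppose $U$ is effectively if-essentially $\mc F$-incomplete, witnessed by a partial \compu\ function $\Phi$. First I would note that from a sentence $\phi$ we can effectively compute an index $g(\phi)$ such that ${\sf W}_{g(\phi)} = \adj U \phi$: we enumerate the (fixed) \ce~axiomatisation of $U$ and additionally throw in $\phi$. The map $\phi \mapsto g(\phi)$ is total \compu. Now ${\sf W}_{g(\phi)}$ is a finite extension of $U$ (we have only added one sentence to the axioms), hence an if-extension. Moreover ${\sf W}_{g(\phi)}$ is consistent precisely when $\adj U\phi$ is consistent, since they are literally the same theory. I would then define $\Psi(\phi) := \Phi(g(\phi))$, which is a partial \compu\ function.

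It remains to verify that $\Psi$ witnesses effective ef-essential $\mc F$-incompleteness. Fix a sentence $\phi$ with $\adj U\phi$ consistent. Then ${\sf W}_{g(\phi)}$ is a consistent finite extension of $U$, so by the defining property of $\Phi$ we get that $\Phi(g(\phi))$ converges, $\Phi(g(\phi)) \in \mc F({\sf W}_{g(\phi)})$, and $\Phi(g(\phi))$ is independent of ${\sf W}_{g(\phi)}$. Since ${\sf W}_{g(\phi)} = \adj U\phi$, this says exactly that $\Psi(\phi)$ converges, is independent of $\adj U\phi$, and lies in $\mc F(\adj U\phi)$, as required.

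I expect no real obstacle here; the statement is genuinely a "simple insight" as the text advertises. The only point deserving a moment's care is that the if-notion quantifies over \emph{all} indices $i$ such that ${\sf W}_i$ is a consistent finite extension of $U$ (an intensional condition on the presentation), and I need to make sure my constructed presentation ${\sf W}_{g(\phi)}$ indeed qualifies as such a finite extension in the relevant sense --- i.e.\ that adjoining a single sentence to the axiom enumeration counts as a finite (if-)extension. This is immediate from the definitions, so the argument is essentially bookkeeping about presentations rather than anything substantive.
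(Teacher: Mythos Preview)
Your proposal is correct and follows exactly the paper's approach: the paper's proof is the single sentence ``This is immediate seeing that, given $\phi$, we can effectively find an index of $\adj U\phi$,'' and you have spelled out precisely this idea with $g(\phi)$ and $\Psi := \Phi \circ g$.
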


\begin{proof}
    This is immediate seeing that, given $\phi$, we can effectively find an index of $\adj U\phi$.
\end{proof}

However, an effectively ef-essentially incomplete theory can be decidable as we show in the following example.
So, the converse of Theorem~\ref{if_to_ef} does not generally hold. 

\begin{ex}\label{succ-example}
We consider the theory ${\sf Succ}^\circ$ in the language of zero and successor. The theory is axiomatised by: 
zero is not a successor; successor is injective; every number is either zero or a successor; for every $n$, there is at most one
successor-cycle of size $n$. 

One can show that ${\sf Succ}^\circ$ is decidable and that every sentence is equivalent to a Boolean combination of sentences ${\sf C}_n$ saying
`there is a cycle of size $n$'. See, e.g., \cite[Appendix A]{viss:nomi23}. We note that, over ${\sf Succ}^\circ$, the ${\sf C}_n$ are mutually independent.

Let $\phi$ be any ${\sf Succ}^\circ$-sentence. We can effectively find a sentence $\psi$, equivalent to $\phi$, which is a Boolean combination of the ${\sf C}_n$.
Let $k$ be the smallest number so that ${\sf C}_k$ does not occur in $\psi$. We set $\Phi(\phi) := {\sf C}_k$. It is easily seen that
$\Phi$ witnesses the effective ef-essential incompleteness of ${\sf Succ}^\circ$.
\end{ex}

We can say more about the difference of the two notions.
The following two theorems reveal an intrinsic difference between \emph{if-} and \emph{ef-}.

\begin{thm}\label{keukensmurf}
Suppose $\mc X$ is \ce~and $\Phi$ is a partial \compu\ witness that
$U$ is an effectively if-essentially $\mc X$-incomplete \ce~theory. 
Then, we can find, effectively from an index of $\Phi$, a $\phi \in \mc X$, such that $U \cup \{\phi\}$ is 
inconsistent. 
In particular, $U_{\mf p} \cup \mc X$ is not mono-consistent.
\end{thm}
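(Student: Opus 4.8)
The plan is to run the Pour-El diagonalisation of Theorem~\ref{pour-el} ``against $U$ itself'', but first to repair $\Phi$ so that mere convergence already certifies membership in $\mc X$. Since $\mc X$ is \ce, I would fix an enumeration of it and define a partial \compu\ function $\Phi'$ as follows: on input $i$, simulate $\Phi(i)$ and, once it halts with value $\phi$, search for $\phi$ in the enumeration of $\mc X$, outputting $\phi$ if and only if it is found. Whenever ${\sf W}_i$ is a consistent finite extension of $U$ we have that $\Phi(i)$ converges to an element of $\mc X$, so $\Phi'(i)$ halts with the same value; hence $\Phi'$ is again a partial \compu\ witness that $U$ is effectively if-essentially $\mc X$-incomplete, now with the extra feature that $\Phi'(i){\downarrow}$ implies $\Phi'(i)\in \mc X$. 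All of this is effective in an index of $\Phi$.

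Next I would apply the Recursion Theorem to obtain an index $k^\ast$ with
\[ {\sf W}_{k^\ast} = U \cup \verz{\phi \mid \Phi'(k^\ast)\simeq \phi}, \]
so that ${\sf W}_{k^\ast} = \adj U{\Phi'(k^\ast)}$ when $\Phi'(k^\ast){\downarrow}$ and ${\sf W}_{k^\ast}=U$ otherwise. If $\Phi'(k^\ast)$ were to diverge, then ${\sf W}_{k^\ast}=U$ would be a consistent finite extension of $U$, and the witness property of $\Phi'$ would force $\Phi'(k^\ast)$ to converge, a contradiction. So $\Phi'(k^\ast)$ converges to some $\phi^\ast$, and by the construction of $\Phi'$ we have $\phi^\ast\in \mc X$ and ${\sf W}_{k^\ast}=\adj U{\phi^\ast}$. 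If $\adj U{\phi^\ast}$ were consistent, it would be a consistent finite extension of $U$, whence $\phi^\ast=\Phi'(k^\ast)$ would be independent of ${\sf W}_{k^\ast}=\adj U{\phi^\ast}$; but $\adj U{\phi^\ast}\vdash \phi^\ast$, a contradiction. Hence $\adj U{\phi^\ast}$ is inconsistent. Since $k^\ast$, and therefore $\phi^\ast=\Phi'(k^\ast)$, are obtained effectively from an index of $\Phi$, this yields the desired $\phi^\ast\in \mc X$ with $\adj U{\phi^\ast}$ inconsistent.

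The one genuinely delicate point -- and the reason \ce ness of $\mc X$ is assumed -- is that the finite extension actually realised, namely $\adj U{\phi^\ast}$, turns out to be \emph{inconsistent}, so the original witness property of $\Phi$ gives no information at $k^\ast$ and in particular need not certify $\Phi(k^\ast)\in \mc X$. The preprocessing step that turns $\Phi$ into $\Phi'$ is exactly what secures $\phi^\ast\in \mc X$ regardless, by tying convergence of $\Phi'$ to a successful search in the enumeration of $\mc X$. Finally, for the ``in particular'': from the inconsistency of $\adj U{\phi^\ast}$ we obtain $U\vdash \neg\,\phi^\ast$, i.e.\ $\neg\,\phi^\ast\in U_{\mf p}$, while $\phi^\ast\in \mc X$; thus $U_{\mf p}\cup \mc X$ contains the mutually contradictory pair $\phi^\ast,\neg\,\phi^\ast$, so it is not mono-consistent (its conjunction-closure contains the inconsistent sentence $\phi^\ast\wedge \neg\,\phi^\ast$).
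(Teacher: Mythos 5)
Your proposal is correct and is essentially the paper's own proof: the paper applies the Recursion Theorem to the set ${\sf W}_{k^\ast}$ that equals $U \cup \{\Phi(k^\ast)\}$ exactly when $\Phi(k^\ast)$ converges \emph{and} its value is found in the enumeration of $\mc X$ (and equals $U$ otherwise), which is literally the same c.e.\ set you build -- you merely package the $\mc X$-membership search into a preprocessed witness $\Phi'$ instead of into the case distinction defining ${\sf W}_{k^\ast}$, and the ensuing diagonal argument (divergence is impossible, hence convergence to some $\phi^\ast \in \mc X$ with ${\sf W}_{k^\ast} \vdash \phi^\ast$, forcing $U \cup \{\phi^\ast\}$ to be inconsistent) is identical. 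Your explicit justification of the ``in particular'' clause is a small addition the paper leaves tacit, and it is fine.
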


\begin{proof} 
By the Recursion Theorem, we find an $i$ such that
\[
    {\sf W}_{i} = \begin{cases} U \cup \{\Phi(i)\} & \text{if}\ \Phi(i)\ \text{converges and}\ \Phi(i) \in \mc X, \\
    U & \text{otherwise}.
    \end{cases}
\]
If the second clause would obtain, ${\sf W}_i$ would be a consistent if-extension of $U$.
So, $\Phi(i)$ would converge to an element of $\mc X$. \emph{Quod non}.
Thus, only the first clause can be active. Hence, $\Phi(i)$ converges to an element $\phi$ of $\mc X$.
Then, ${\sf W}_i = U \cup \{\phi\}$ and ${\sf W}_i \vdash \Phi(i)$. 
By the effective if-essential $\mc X$-incompleteness of $U$, we have that $U \cup \{\phi\}$ is inconsistent. 
\end{proof}

\begin{thm}\label{tuinsmurf}
    Suppose $U$ is a consistent effectively ef-essentially incomplete \ce~theory. 
Then, there is a \ce~set $\mc X$ such that $U$ is effectively ef-essentially $\mc X$-incomplete
and every $\phi$ in $\mc X$ is consistent with $U$, i.o.w., $U_{\mf p} \cup \mc X$ is mono-consistent.
\end{thm}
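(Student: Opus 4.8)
The plan is to keep the given witness function but compose it with a logical guard that neutralises precisely the inputs on which it is unreliable. Let $\Phi$ be a \compu\ function witnessing that $U$ is effectively ef-essentially incomplete, so that for every sentence $\phi$ with $\adj U\phi$ consistent, $\Phi(\phi)$ converges and is independent of $\adj U\phi$. The naive guess $\mc X := \mathrm{range}(\Phi)$ fails: when $\adj U\phi$ is \emph{in}consistent the hypothesis says nothing about $\Phi(\phi)$, which may well be inconsistent with $U$, and we cannot screen out these bad $\phi$ since the consistency of $\adj U\phi$ is only $\Pi^0_1$. The fix is to replace $\Phi$ by $\Phi'(\phi) := (\phi \to \Phi(\phi))$ and set $\mc X := \{\phi \to \Phi(\phi) \mid \Phi(\phi){\downarrow}\}$, which is \ce\ as the range of a partial \compu\ function.

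First I would check that $\Phi'$ still witnesses incompleteness on the good inputs. If $\adj U\phi$ is consistent, then $\Phi(\phi)$ converges, so $\Phi'(\phi)$ converges and lies in $\mc X$; moreover $\adj U\phi \vdash \phi$, so over $\adj U\phi$ the sentence $\phi \to \Phi(\phi)$ is equivalent to $\Phi(\phi)$. Since $\Phi(\phi)$ is independent of $\adj U\phi$, so is $\Phi'(\phi)$. Hence $\Phi'$ witnesses the effective ef-essential $\mc X$-incompleteness of $U$.

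Next I would verify that every element of $\mc X$ is consistent with $U$, by a case split on an arbitrary $\psi = \phi \to \Phi(\phi) \in \mc X$. If $\adj U\phi$ is inconsistent, i.e.\ $U \vdash \neg\, \phi$, then $U \vdash \phi \to \Phi(\phi)$, so $\psi$ is a theorem of the consistent theory $U$ and is therefore consistent with $U$. If $\adj U\phi$ is consistent, then independence gives $\adj U\phi \nvdash \neg\, \Phi(\phi)$, so $U \cup \{\phi, \Phi(\phi)\}$ has a model, and that model satisfies $\phi \to \Phi(\phi)$; thus $\adj U\psi$ is consistent. In both cases $\psi$ is consistent with $U$, which gives the mono-consistency of $U_{\mf p} \cup \mc X$ claimed in the theorem.

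The only real subtlety is the choice of guard, and it is instructive to see why the two obvious alternatives fail. The guard $\phi \wedge \Phi(\phi)$ preserves independence on good inputs but is itself inconsistent with $U$ whenever $\phi$ is, so it does not help; and dropping the guard altogether (using $\Phi$ directly) founders on exactly the undecidability obstruction noted above. The implication $\phi \to \Phi(\phi)$ threads the needle because it collapses to $\Phi(\phi)$ modulo $\adj U\phi$ on good inputs, yet becomes a $U$-theorem on bad inputs. This is precisely the structural feature unavailable in the intensional (if) case, which is why Theorem~\ref{keukensmurf} goes the other way there.
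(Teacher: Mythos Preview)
Your proof is correct and follows exactly the same approach as the paper: define $\Psi(\phi):=(\phi\to\Phi(\phi))$, take $\mc X$ to be its range, and verify by the same two-case split that every value is consistent with $U$. Your write-up is in fact a bit more explicit than the paper's (you spell out why $\Psi$ still witnesses independence over $\adj U\phi$, which the paper leaves implicit), and the closing discussion of why the alternative guards fail is a nice touch.
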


\begin{proof}
    Suppose $U$ is effectively ef-essentially incomplete as witnessed by $\Phi$. 
    We take:
    \[
    \Psi(\phi) :=  \begin{cases} (\phi \to \Phi(\phi)) & \text{if}\ \Phi(\phi)\ \text{converges}, \\
    \text{undefined} & \text{otherwise}.
    \end{cases}
    \]
    Let $\mc X$ be the range of $\Psi$.
    
    If $\phi$ is inconsistent with $U$ and $\Phi(\phi)$ converges, then $U\vdash \phi \to \Phi(\phi)$ and, thus,
    $\Psi(\phi)$ is consistent with $U$. If $\phi$ is consistent with $U$, then $\Phi(\phi)$ converges
    and $\Phi(\phi)$ is independent of $U \cup \{\phi\}$ and, \emph{a fortiori},
    consistent with $U$. 
\end{proof}

Combining Theorems~\ref{keukensmurf} and~\ref{tuinsmurf}, we immediately find the following corollary:
\begin{cor}\label{non_ef_to_if}
    Suppose $U$ is a consistent effectively ef-essentially incomplete \ce~theory. 
Then, there is a \ce~set $\mc X$ such that $U$ is effectively ef-essentially $\mc X$-incomplete
but not effectively if-essentially $\mc X$-incomplete.
\end{cor}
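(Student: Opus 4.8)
The plan is to simply combine the two preceding theorems, so the proof is essentially a one-line deduction once both are in hand. The corollary claims that a consistent effectively ef-essentially incomplete \ce\ theory $U$ admits a \ce\ set $\mc X$ witnessing effective ef-essential $\mc X$-incompleteness while \emph{failing} to witness effective if-essential $\mc X$-incompleteness. First I would invoke Theorem~\ref{tuinsmurf} to produce a \ce\ set $\mc X$ such that $U$ is effectively ef-essentially $\mc X$-incomplete and $U_{\mf p} \cup \mc X$ is mono-consistent, i.e.\ every $\phi \in \mc X$ is consistent with $U$. This handles the positive half of the claim directly.

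For the negative half, the strategy is a proof by contradiction using Theorem~\ref{keukensmurf}. Suppose, for contradiction, that $U$ were \emph{also} effectively if-essentially $\mc X$-incomplete, with some partial \compu\ witness $\Phi$. Since $\mc X$ is \ce, Theorem~\ref{keukensmurf} applies and yields (effectively from an index of $\Phi$) a sentence $\phi \in \mc X$ such that $U \cup \{\phi\}$ is inconsistent; equivalently, $U_{\mf p} \cup \mc X$ is not mono-consistent. This flatly contradicts the mono-consistency of $U_{\mf p} \cup \mc X$ delivered by Theorem~\ref{tuinsmurf} in the previous step. Hence no such $\Phi$ can exist, so $U$ is not effectively if-essentially $\mc X$-incomplete, which is exactly what we wanted.

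I do not expect any genuine obstacle here: the real work was already carried out in Theorems~\ref{keukensmurf} and~\ref{tuinsmurf}, and the corollary is precisely the observation that their conclusions are incompatible. The one point worth double-checking is the hypothesis matching: Theorem~\ref{keukensmurf} requires $\mc X$ to be \ce, and Theorem~\ref{tuinsmurf} indeed produces a \ce\ set (the range of a partial \compu\ function), so the two theorems compose cleanly. The consistency of $U$ is needed for Theorem~\ref{tuinsmurf} to apply and is assumed in the corollary, so the contradiction is genuine and the argument closes.
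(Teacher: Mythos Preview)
Your proposal is correct and matches the paper's approach exactly: the paper simply states that the corollary follows immediately by combining Theorems~\ref{keukensmurf} and~\ref{tuinsmurf}, which is precisely what you do. Your check that the $\mc X$ produced by Theorem~\ref{tuinsmurf} is \ce\ (so that Theorem~\ref{keukensmurf} applies) is the only detail worth noting, and you handle it correctly.
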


The next theorem gives a condition under which effective ef-essential $\mc X$-incompleteness w.r.t.~some $N$ and $\xi$ implies effective essential $\mc X$-incompleteness. 

\begin{thm}\label{optimisticsmurf}
Let $U$ be a consistent \ce~theory and suppose $U$ is effectively ef-essentially 
$\mc X$-incomplete w.r.t.~$N$ and $\xi$. Then, $U$ is effectively essentially $\mc X$-incomplete via a witnessing function $\Phi$, 
where $\Phi(i)$ is of the form $\xi(\gn \sigma)$.

\end{thm}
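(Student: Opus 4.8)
The plan is to reduce the c.e.-extension case to the finite-extension case by manufacturing, from an index $i$ of a consistent extension ${\sf W}_i$ of $U$, a single sentence $\sigma$ such that $\adj U\sigma$ is consistent and $U$-provably settles $\xi(\gn{\sigma})$ exactly when ${\sf W}_i$ does. Write $\chi := \xi(\gn{\sigma})$ and let $\psi_0 :\equiv \bigl(\Prov_i\gn{\chi}\bigr)\leq\bigl(\Prov_i\gn{\neg\,\chi}\bigr)$ and $\psi_1 :\equiv \bigl(\Prov_i\gn{\neg\,\chi}\bigr)<\bigl(\Prov_i\gn{\chi}\bigr)$, where $\Prov_i$ is the provability predicate of ${\sf W}_i$ arithmetised through the interpretation $N$ of $\RR$, and $\leq,<$ are the witness-comparison relations. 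Using the Recursion Theorem with parameters together with the Diagonal Lemma (legitimate since $\gn{\xi(\gn{\sigma})}$ is a computable function of $\gn{\sigma}$), I would obtain, effectively in $i$, a sentence $\sigma$ with
\[ U \vdash \sigma \iff (\chi \wedge \psi_0) \vee (\neg\, \chi \wedge \psi_1) \vee (\neg\, \psi_0 \wedge \neg\, \psi_1). \]
I then set $\Phi(i) := \xi(\gn{\sigma})$; since $\sigma$ is found effectively from $i$, the function $\Phi$ is total and every value has the required form $\xi(\gn{\sigma})$.

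Next I would show that, assuming ${\sf W}_i$ is a consistent extension of $U$, it can decide neither $\chi$ nor $\neg\,\chi$. Suppose ${\sf W}_i \vdash \chi$. Then $\psi_0$ is a true $\Sigma_1$ statement, so $U \vdash \psi_0$ by the $\Sigma_1$-completeness of $\RR$ carried through $N$; moreover, letting $\bar p$ name the least ${\sf W}_i$-proof of $\chi$, Rosser's Lemma (which needs only that $\RR$, hence $U$ via $N$, correctly decides the $\Delta_0$ proof-checks at the numerals $\leq \bar p$) gives $U \vdash \neg\, \psi_1$. Consequently $U \vdash \sigma \iff \chi$, so $\adj U\sigma$ proves $\chi$; and $\adj U\sigma$ is consistent because any model of ${\sf W}_i$ models $\adj U\sigma$. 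But then effective ef-essential $\mc X$-incompleteness w.r.t.\ $N$ and $\xi$, applied to the sentence $\sigma$, says $\chi = \xi(\gn{\sigma})$ is independent of $\adj U\sigma$ — contradicting $\adj U\sigma \vdash \chi$. The symmetric argument (now $U \vdash \sigma \iff \neg\,\chi$) rules out ${\sf W}_i \vdash \neg\,\chi$. Hence $\xi(\gn{\sigma}) = \Phi(i)$ is independent of ${\sf W}_i$.

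It remains to secure $\Phi(i) \in \mc X$, and for this I must again invoke ef-essential incompleteness on $\sigma$, which requires the consistency of $\adj U\sigma$ in the surviving case where ${\sf W}_i$ decides neither $\chi$ nor $\neg\,\chi$. This is the step I expect to be the main obstacle, since $U$ is an arbitrary consistent theory and may be $\Sigma_1$-unsound through $N$, so a crude model-theoretic argument (trying to satisfy the third disjunct $\neg\,\psi_0\wedge\neg\,\psi_1$ in a model of ${\sf W}_i$) can fail because of nonstandard internal ``proofs''. The way I would handle it is to note that $\sigma$ has been built as a genuine Rosser-type sentence, and to show $U \nvdash \neg\,\sigma$ by the standard Gödel–Rosser mechanism adapted over the weak base: this preservation of consistency uses only the $\Delta_0$-correctness and $\Sigma_1$-completeness of $\RR$ through $N$ and the consistency of ${\sf W}_i$, never soundness. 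Granting $U \nvdash \neg\,\sigma$, the theory $\adj U\sigma$ is consistent, so ef-essential $\mc X$-incompleteness yields $\xi(\gn{\sigma}) \in \mc X$, completing the verification that $\Phi$ witnesses the effective essential $\mc X$-incompleteness of $U$ through sentences of the form $\xi(\gn{\sigma})$.
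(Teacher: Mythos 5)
Your construction reaches independence soundly: the two cases where ${\sf W}_i$ decides $\chi = \xi(\gn\sigma)$ are correctly dispatched by $\Sigma_1$-completeness and the Rosser trick carried through $N$. The genuine gap is exactly at the step you flagged: the claim that, in the surviving case, $U \nvdash \neg\,\sigma$ follows ``by the standard G\"odel--Rosser mechanism''. That mechanism does not apply to your $\sigma$. In Rosser's theorem the provability predicates inside the fixed point speak about $\sigma$ \emph{itself}, so the assumption $U \vdash \neg\,\sigma$ hands you an actual proof whose code feeds the witness comparisons. In your $\sigma$ the predicates $\Prov_i$ speak about $\chi$, and in the surviving case ${\sf W}_i$ proves neither $\chi$ nor $\neg\,\chi$; a hypothetical $U$-proof of $\neg\,\sigma$ yields no standard proof of $\chi$ or of $\neg\,\chi$ to compare, so the argument has nothing to run on. What you would actually need is the implication: if ${\sf W}_i \vdash \psi_0 \vee \psi_1$ (which is what $\neg\,\sigma$ gives, since $\neg\,\sigma$ entails the negation of your third disjunct), then $\chi \in {\sf W}_{i\mf p} \cup {\sf W}_{i\mf r}$. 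Without soundness this is false: take $U = \PA + \neg\,\Con(\PA)$ with $N$ the identity and ${\sf W}_i := U$ (the hypotheses of the theorem hold for such $U$ by the Rosser-type remark in Section~\ref{sec_ef}). Then $U$ proves $\Prov_i(\gn{\bot})$, hence proves both $\Prov_i(\gn{\chi})$ and $\Prov_i(\gn{\neg\,\chi})$, and, having full induction, proves $\psi_0 \vee \psi_1$ --- i.e., it refutes your third disjunct outright --- even though both $\psi_0$ and $\psi_1$ are false and $\chi$ is undecided by ${\sf W}_i$. So the consistency of $\adj U \sigma$, and with it the conclusion $\Phi(i) \in \mc X$, is not secured by your construction.

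The paper closes this hole with an extra self-referential layer that your proposal is missing. Besides the witness-comparison formulas $\beta_0 := \alpha_0 \leq \alpha_1$ and $\beta_1 := \alpha_1 < \alpha_0$ (your $\psi_0,\psi_1$ in different clothing, with $\alpha_0,\alpha_1$ representing ${\sf W}_{i\mf p}$ and ${\sf W}_{i\mf r}$), it builds an FGH-style fixed point $\rho$ with ${\sf R} \vdash \rho(\num n) \iff \eta(\num n) \leq \alpha_0(\gn{\rho^N(\num n)})$, whose point is that ${\sf W}_i \vdash \rho^N(\num n)$ iff $\rho(\num n)$ is \emph{true} iff $n \in {\sf W}_{i\mf p} \cup {\sf W}_{i\mf r}$: provability at a numeral entails membership. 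Setting $\gamma_j := \beta_j \wedge \rho$ yields property (B): if ${\sf W}_i \vdash (\gamma_0(\num n) \vee \gamma_1(\num n))^N$, then $n \in {\sf W}_{i\mf p} \cup {\sf W}_{i\mf r}$. The diagonal sentence $\sigma$ is then formed with $\gamma_0,\gamma_1$ where you use $\psi_0,\psi_1$; since $\neg\,\sigma$ implies $(\gamma_0(\gn{\xi(\gn{\sigma})}) \vee \gamma_1(\gn{\xi(\gn{\sigma})}))^N$, property (B) together with the already-established independence of $\xi(\gn{\sigma})$ immediately gives $U \nvdash \neg\,\sigma$, with no soundness assumption anywhere. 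To repair your proof you would have to graft this ``provability implies membership'' device (or an equivalent) onto your witness comparisons; plain comparisons of provability statements cannot do the job.
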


\begin{proof}
We assume the conditions of the theorem.
Suppose that ${\sf W}_{i}$ is a consistent extension of $U$. 
We construct formulas $\gamma_0$ and $\gamma_1$ with some desired properties as follows.
Our construction is an adaptation of a solution to \cite[Exercise 3.4]{Lin03}.
\begin{enumerate}[{\sf St{a}ge 1.}]
    \item We start with $\Sigma_1$-formulas $\alpha_0$ and $\alpha_1$, where $\alpha_0$ represents ${\sf W}_{i\mf p}$
    and $\alpha_1$ represents ${\sf W}_{i\mf r}$. We arrange it so that the $\alpha_i$ start with a single existential
    quantifier followed by a $\Delta_0$-formula.
    \item 
    Let $\beta_0 := \alpha_0 \leq \alpha_1$ and $\beta_1 := \alpha_1 < \alpha_0$.
    We note that the $\beta_i$ represent the same sets as the $\alpha_i$. 
    Moreover, if $n \in {\sf W}_{i \mf p}$, then $\beta_0(\num n)$ is true, and, hence ${\sf R} \vdash \beta_{0}(\num n) \wedge \neg\, \beta_{1}(\num n)$. 
    Also, if $n \in {\sf W}_{i \mf r}$, then ${\sf R} \vdash \beta_{1}(\num n) \wedge \neg\, \beta_{0}(\num n)$. 
    \item 
    Let $\eta(x)$ be a $\Sigma_1$-formula with one existential quantifier followed by a $\Delta_0$-formula that
    is equivalent in predicate logic with $(\beta_0(x) \vee \beta_1(x))$.
    By the Fixed Point Lemma, we find a $\rho$, such that,
    for every $n$, we have ${\sf R} \vdash \rho(\num n) \iff \eta(\num n) \leq \alpha_0(\gn{\rho^N(\num n)}) $.
    By the reasoning of the FGH Theorem (see, e.g., \cite[Section 3]{viss:faith05} or~\cite{kura:FGHt23}), 
    we have: ${\sf W}_i \vdash \rho^N(\num n)$ iff $\rho(\num{n})$ is true iff $n \in {\sf W}_{i \mf p}
    \cup  {\sf W}_{i\mf r}$. We define $\gamma_i(x) := (\beta_i(x) \wedge \rho(x))$. We find:
    \begin{enumerate}[A.]
        \item The $\gamma_0$ and $\gamma_1$ represent ${\sf W}_{i \mf p}$ and ${\sf W}_{i \mf r}$, respectively.
        \item 
       If ${\sf W}_i \vdash (\gamma_0(\num n) \vee \gamma_1(\num n))^N$, then $n \in {\sf W}_{i\mf p}\cup {\sf W}_{i\mf r}$.
       \item If $n\in {\sf W}_{i\mf p}$, then ${\sf R} \vdash \gamma_0(\num n) \wedge \neg \, \gamma_{1}(\num n)$.
       \item
       If $n\in {\sf W}_{i\mf r}$, then ${\sf R} \vdash\gamma_1(\num n) \wedge \neg \, \gamma_{0}(\num n)$.
    \end{enumerate}
\end{enumerate}

We can effectively find a sentence $\sigma$ from $i$ satisfying the following equivalence:
\[
	U \vdash \sigma \leftrightarrow   \Bigl(\bigl(\gamma_0(\gn{\xi(\gn{\sigma})})^N \to \xi(\gn{\sigma}) \bigr)
     \land \bigl( \gamma_1(\gn{\xi(\gn{\sigma})})^N \to \neg\, \xi(\gn{\sigma})\bigr) \Bigr).
\]
Suppose, towards a contradiction, that $\xi(\gn{\sigma}) \in {\sf W}_{i \mf p} \cup {\sf W}_{i \mf r}$. 

\begin{itemize}
    \item Suppose $\xi(\gn{\sigma}) \in {\sf W}_{i \mf p}$. Then, by (C), we obtain $U \vdash \sigma \leftrightarrow \xi(\gn{\sigma})$.
Since $U \cup \{\sigma\} \vdash \xi(\gn{\sigma})$, we have that $U \cup \{\sigma\}$ is inconsistent because $U$ is ef-essentially $\mc X$-incomplete w.r.t.~$N$ and $\xi$. 
Thus, $U \vdash \neg\, \sigma$, and, hence, $U \vdash \neg\, \xi(\gn{\sigma})$. 
This contradicts the consistency of ${\sf W}_i$. 
    \item Suppose $\xi(\gn{\sigma}) \in {\sf W}_{i \mf r}$. Then, by (D), we obtain
    $U \vdash \sigma \leftrightarrow \neg\, \xi(\gn{\sigma})$.
Since $U \cup \{\sigma\} \vdash \neg\, \xi(\gn{\sigma})$, we have that $U \cup \{\sigma\}$ is inconsistent. 
Thus, $U \vdash \xi(\gn{\sigma})$. 
This contradicts the consistency of ${\sf W}_{i}$. 
\end{itemize}
We have shown that $\xi(\gn{\sigma})$ is independent of ${\sf W}_{i}$. 
By (B), we find that $U \nvdash (\gamma_0(\gn{\xi(\gn{\sigma})}) \lor \gamma_1(\gn{\xi(\gn{\sigma})}))^N$. 
Since \[U\vdash \neg\, \sigma \to \bigl (\gamma_0(\gn{\xi(\gn{\sigma})}) \lor \gamma_1(\gn{\xi(\gn{\sigma})})\bigr)^N,\]
we have that $U \cup \{\sigma\}$ is consistent. 
Hence, $\xi(\gn{\sigma}) \in \mc X$. 
We take $\Phi(i) : = \xi(\gn{\sigma})$. Thus,
$\Phi$ witnesses the effective essential $\mc X$-incompleteness of $U$. 
\end{proof}

We show that the converse of Theorem~\ref{optimisticsmurf} does not generally hold. 
For this, as compared with Corollary~\ref{cor_pour-el}, we prove the following proposition stating that the notion of effective ef-essential $\mc X$-incompleteness w.r.t.~$N$ and $\xi$ is not equivalent to the one obtained by replacing $\mc X$ with $[\mc X]_U$. 

\begin{prop}\label{ef_non_closure}
    If $U$ is effectively ef-essentially incomplete w.r.t.~$N$ and $\xi$, then there exists a \compu\ set $\mc X$ of formulas such that $U$ is effectively ef-essentially $[\mc X]_U$-incomplete w.r.t.~$N$ and $\xi$, but $U$ is not effectively ef-essentially $\mc X$-incomplete w.r.t.~$N$ and $\eta$ for all $\eta$. 
\end{prop}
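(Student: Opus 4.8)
The plan is to build the set $\mc X$ by attaching to each value $\xi(\gn\phi)$ a padding that is $U$-provably trivial but that cannot be reproduced uniformly by substituting a numeral into a single formula. Fix a logically valid (hence $U$-provable) sentence $t$, say $t := \forall v\,(v=v)$, and for each sentence $\phi$ let $\pi_\phi$ be the conjunction of $\gn\phi+1$ copies of $t$. Set $\theta_\phi := \xi(\gn\phi)\wedge\pi_\phi$ and $\mc X := \verz{\theta_\phi \mid \phi \text{ a sentence}}$. Since $\pi_\phi$ is $U$-provable we have $U\vdash\theta_\phi\leftrightarrow\xi(\gn\phi)$, so $\xi(\gn\phi)\in[\mc X]_U$ for every $\phi$; together with the hypothesis that each $\xi(\gn\phi)$ is independent of a consistent $\adj U\phi$, this shows that $\xi$ itself witnesses the effective ef-essential $[\mc X]_U$-incompleteness of $U$ w.r.t.\ $N$ and $\xi$. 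Moreover $\mc X$ is computable: to test a formula one checks whether it is a conjunction whose right conjunct is a block of $m$ copies of $t$ and whose left conjunct is exactly $\xi(\gn\phi)$ for the sentence $\phi$ with $\gn\phi=m-1$.

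For the negative half I would show that no $\eta$ witnesses effective ef-essential $\mc X$-incompleteness w.r.t.\ $N$ and $\eta$. Suppose one did. Then $\eta(\gn\phi)\in\mc X$ for every consistent $\adj U\phi$, and since every element of $\mc X$ is a conjunction, $\eta$ must itself be a conjunction $\eta=\eta'\wedge\eta''$ (substitution preserves the principal connective). The decisive point is a structural observation about the right conjunct. If the free variable genuinely occurs in $\eta''$, then $\eta''(\gn\phi)$ contains the $N$-numeral term $\num{\gn\phi}^N$ as a subterm; for $\gn\phi$ large this term is bigger than $t$, so it cannot occur inside a block of copies of $t$, whence $\eta''(\gn\phi)$ is no $\pi_\psi$ and $\eta(\gn\phi)\notin\mc X$. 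As there are consistent $\adj U\phi$ with arbitrarily large codes (because $\phi\wedge t$ is consistent with $U$ whenever $\phi$ is, and has a strictly larger code), this contradicts the assumption. Hence $\eta''$ is variable-free, and if $\eta(\gn\phi)\in\mc X$ then $\eta''$ equals a fixed block $\pi_{\psi_0}$, forcing $\eta(\gn\phi)=\theta_{\psi_0}$ for a single fixed sentence $\theta_{\psi_0}$ whenever it lands in $\mc X$.

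It then remains to defeat this constant case through the independence clause. The sentence $\theta_{\psi_0}$ is $U$-decided or $U$-independent. If $U$ proves or refutes $\theta_{\psi_0}$, take $\phi:=\top$, so $\adj U\phi=U$ is consistent and $\eta(\gn\phi)=\theta_{\psi_0}$ is not independent of $\adj U\phi$. If $\theta_{\psi_0}$ is $U$-independent, take $\phi:=\theta_{\psi_0}$, so $\adj U\phi$ is consistent but proves $\eta(\gn\phi)=\theta_{\psi_0}$, again destroying independence. In every parse, therefore, some consistent $\adj U\phi$ violates the witnessing condition for $\eta$, and $\mc X$ is as required.

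I expect the main obstacle to be exactly this structural observation about substitution and the $N$-numerals: one must verify carefully that substituting the $N$-numeral of a large number into a formula in which the variable really occurs introduces a subterm too large to sit inside a conjunction of copies of $t$, and one must check that the remaining degenerate parses — $\eta$ not a conjunction at all, or $\eta''$ variable-free but not a block of $t$'s — each force $\eta(\gn\phi)\notin\mc X$ outright. The rest (computability of $\mc X$, the $[\mc X]_U$ half, and the unbounded supply of consistent finite extensions) is routine.
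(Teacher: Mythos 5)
Your proposal is correct and takes essentially the same route as the paper: the paper's set is $\mc X := \{\xi(\gn{\phi}) \land \bigwedge_{i=1}^{\gn{\phi}} (0=0)^N \mid \phi\ \text{a $U$-sentence}\}$, i.e.\ the identical length-coded padding device (with the provably true padding sentence $(0=0)^N$ in place of your $\forall v\,(v=v)$, a choice that keeps the padding inside the $U$-language under any conventions), and the $[\mc X]_U$ half is argued exactly as you do. For the negative half the paper only records the key fact that codes of sentences consistent with $U$ are unbounded and leaves the rest as ``it is shown''; your case split --- a substituted numeral cannot sit inside a block of copies of a fixed closed padding sentence, and the residual constant case $\eta(\gn{\phi})=\theta_{\psi_0}$ is defeated by choosing $\phi:=\top$ or $\phi:=\theta_{\psi_0}$ --- is precisely the intended fleshing-out of that remark.
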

\begin{proof}
    Suppose that $U$ is effectively ef-essentially incomplete w.r.t.~$N$ and $\xi$. 
    Let $\mc X$ be the \compu\ set defined by
    \[
        \mc X : = \{\xi(\gn{\phi}) \land \bigwedge_{i = 1}^{\gn{\phi}} (0=0)^N \mid \phi\ \text{is a}\ U\text{-sentence}\}. 
    \]
    Since each $\xi(\gn{\phi})$ is $U$-provably equivalent to some element of $\mc X$, we have that $U$ is effectively ef-essentially $[\mc X]_U$-incomplete w.r.t.~$N$ and $\xi$. 
    On the other hand, since there are unboundedly many $U$-sentences $\phi$ in the sense of G\"odel numbers such that $U \cup \{\phi\}$ is consistent, it is shown that there is no single formula $\eta$ such that $U$ is effectively ef-essentially $\mc X$-incomplete w.r.t.~$N$ and $\eta$. \end{proof}


\begin{rem}
    Suppose $U$ is a \ce~theory that interprets {\sf R} via $N$. Then, we can find a $\rho$ such that 
    $U$ is essentially ef-incomplete w.r.t. $N$ and $\rho$. This is a variant of Rosser's Theorem.
    It follows, by Theorem~\ref{optimisticsmurf}, that $U$ is effectively if-essentially incomplete.
    Moreover, this can be relativised
    to $\mc X$, for any $\mc X$ that contains all the $\rho(\gn \phi)$'s.
    From the results presented in this section, we have the following observations. 
    \begin{itemize}
        \item  By Theorem~\ref{keukensmurf}, we can effectively find a $\sigma$, such that
    $U \cup \{\rho(\sigma)\}$ is inconsistent. 
    
        \item By Corollary~\ref{non_ef_to_if}, we can find a \ce~set $\mc Y$, such that $U$ is
    effectively ef-essentially $\mc Y$-incomplete, but not effectively if-essentially $\mc Y$-incomplete.
    
        \item Finally, by Theorem~\ref{ef_non_closure}, we can find a \compu\ $\mc Z$ such that $U$ is effectively ef-essentially
    $[\mc Z]_U$-incomplete w.r.t.~$N$ and $\rho$, but not effectively ef-essentially
    $\mc Z$-incomplete w.r.t.~$N$ and $\eta$ for all $\eta$. 
    By Theorem~\ref{optimisticsmurf}, $U$ is effectively if-essentially $[\mc Z]_U$-incomplete, and so by Corollary~\ref{cor_pour-el}, we find that $U$ is effectively if-essentially $\mc Z$-incomplete.
    \end{itemize}
\end{rem}

We provide some versions of the converses of Theorems~\ref{if_to_ef} and~\ref{optimisticsmurf} 
when $U$ and $\mc X$ satisfy a certain condition.

\begin{thm}\label{truesmurf}
Let $U$ be a consistent \ce~theory and let $N$ be an interpretation of $\RR$ in $U$. 
Let $\mc X$ be a \ce~set of sentences. 
Suppose that we have a $U$-formula {\sf true} such that
 $U \vdash {\sf true}(\gn \phi) \iff \phi$, for all $\phi \in \mc X$.
Then, the following are equivalent:
\begin{enumerate}[a.]
    \item $U$ is effectively ef-essentially $[\mc X]_U$-incomplete w.r.t.~$N$ and $\xi$ for some $\xi$.
    \item $U$ is effectively if-essentially $\mc X$-incomplete.
    \item $U$ is effectively ef-essentially $\mc X$-incomplete.
\end{enumerate}   
\end{thm}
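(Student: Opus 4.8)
The plan is to prove the cycle (a) $\Rightarrow$ (b) $\Rightarrow$ (c) $\Rightarrow$ (a). The first two implications are quick consequences of results already in hand, while (c) $\Rightarrow$ (a) carries the real content and is the only place where the truth predicate ${\sf true}$ is used.

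For (a) $\Rightarrow$ (b): assuming (a), I would apply Theorem~\ref{optimisticsmurf} with $[\mc X]_U$ playing the role of the witness class, obtaining that $U$ is effectively essentially $[\mc X]_U$-incomplete. Since $\mc X$ is \ce, the ``moreover'' part of Corollary~\ref{cor_pour-el} tells us that effective essential $[\mc X]_U$-incompleteness is equivalent to effective essential $\mc X$-incompleteness, and these are in turn equivalent to effective if-essential $\mc X$-incompleteness. This last is exactly (b). For (b) $\Rightarrow$ (c): this is immediate from Theorem~\ref{if_to_ef} applied with $\mc F$ the constant function $\mc X$.

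For (c) $\Rightarrow$ (a), which is the heart of the matter, let $\Phi$ be a total \compu\ witness of (c). I would set $f(\gn \phi) := \gn{\Phi(\phi)}$, a total \compu\ function, and fix a formula $\theta(x,y)$ strongly representing $f$ in $\RR$, so that $\RR \vdash \forall y\,(\theta(\num m, y) \iff y = \num{f(m)})$ for every $m$. Interpreting via $N$, I would then define the single $U$-formula $\xi(x) := \exists y\, (\theta^N(x,y) \wedge {\sf true}(y))$, with the quantifier relativised to $N$. The point is that for any $\phi$ with $U \cup \{\phi\}$ consistent, $N$-representability gives $U \vdash \xi(\gn \phi) \iff {\sf true}(\gn{\Phi(\phi)})$, and, since $\Phi(\phi) \in \mc X$, the hypothesis on ${\sf true}$ gives $U \vdash {\sf true}(\gn{\Phi(\phi)}) \iff \Phi(\phi)$; hence $U \vdash \xi(\gn \phi) \iff \Phi(\phi)$. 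From this equivalence I would read off both required properties: $\xi(\gn \phi) \in [\mc X]_U$, because it is $U$-provably equivalent to the element $\Phi(\phi)$ of $\mc X$; and $\xi(\gn \phi)$ is independent of $U \cup \{\phi\}$, because $\Phi(\phi)$ is and the two are $U$-provably (hence $U \cup \{\phi\}$-provably) equivalent. Thus $\xi$ witnesses (a).

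The main obstacle is precisely this last construction: converting the extensional, per-sentence witness $\Phi$ into one uniform formula $\xi$ whose substitution instances are guaranteed to land in $[\mc X]_U$. This is exactly what the truth predicate buys us, since it lets us replace the externally computed sentence $\Phi(\phi)$ by the internally recoverable ${\sf true}(\gn{\Phi(\phi)})$, reached uniformly inside $U$ through the $\RR$-representation of $f$ under $N$; the equivalence $U \vdash \xi(\gn \phi) \iff \Phi(\phi)$ is what ties the internal witness back to membership in $\mc X$. The two remaining implications are bookkeeping over Theorems~\ref{optimisticsmurf} and~\ref{if_to_ef} and Corollary~\ref{cor_pour-el}.
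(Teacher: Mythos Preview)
Your proof is correct and follows essentially the same route as the paper: the cycle (a) $\Rightarrow$ (b) $\Rightarrow$ (c) via Theorem~\ref{optimisticsmurf}, Corollary~\ref{cor_pour-el}, and Theorem~\ref{if_to_ef}, and then (c) $\Rightarrow$ (a) by representing the witness function $\Phi$ in $\RR$, pushing it through $N$, and composing with ${\sf true}$ to obtain the single formula $\xi$. The paper writes the representation as ${\sf G}_\Phi$ applied directly to $\Phi$ rather than your $\theta$ representing the induced code-to-code map $f$, but this is purely cosmetic.
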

\begin{proof}
``(a) to (b)''. This follows from Theorem~\ref{optimisticsmurf} and Corollary~\ref{cor_pour-el}. 

\medskip

``(b) to (c)''. By Theorem~\ref{if_to_ef}. 

\medskip

``(c) to (a)''. 
    Let $\Phi$ witness the effective ef-essential $\mc X$-incompleteness of $U$. 
    By the representability theorem, we find a formula ${\sf G}_\Phi(x, y)$ representing $\Phi$ in $\RR$. 
    Define \[ \xi(x):=
        \exists y\in \delta_N \,  \bigl({\sf G}^N_\Phi(x, y) \land  {\sf true}(y) \bigr).\]
    We show that $U$ is effectively ef-essentially $[\mc X]_U$-incomplete w.r.t.~$N$ and $\xi$. 
    Suppose that $U \cup \{\phi\}$ is consistent and, thus, $\Phi(\varphi) \in \mc X$ and $\Phi(\phi)$ is independent of $\adj U\phi$. 
    We get:  
    \begin{eqnarray*}
        U \vdash \xi(\gn{\phi}) & \iff & \exists y\in \delta_N \, \bigl({\sf G}^N_\Phi(\gn{\phi}, y)  \land {\sf true}(y) \bigr) \\
        & \iff &{\sf true}(\gn{\Phi(\phi)}) \\
        & \iff& \Phi(\phi). 
    \end{eqnarray*}
    Thus, $\xi(\gn{\phi})$ is also independent of $U \cup \{\phi\}$ and we find $\xi(\gn{\phi}) \in [\mc X]_U$. 
\end{proof}

\begin{rem}
    One might wonder whether there is an infinitary version for the ef-notions. It seems that this will be
    non-trivial to attain. The reason is that we can use Craig's trick to transform every index $i$ effectively
    to an index $i^\ast$, so that $(U+{\sf W}_i)_{\mf p} = (U+{\sf W}_{i^\ast})_{\mf p}$ and ${\sf W}_{i^\ast}$ is
    \compu\. We can even make ${\sf W}_{i^\ast}$ p-time decidable. In case $U$ is a pair theory, we can even
    make ${\sf W}_{i^\ast}$ a scheme. See~\cite{vaug:axio67} or~\cite{viss:vaug12}. So, we would have to look for some really different
    notion of extension.
\end{rem}

\section{Heredity}\label{heredity}
In this section, we study what happens when we consider effective notions combined with hereditariness. Our main insight here is that
an adapted version of Pour-El's result also holds in this case.

\begin{defn}
A consistent \ce~theory $U$ is \textit{effectively essentially hereditarily $\mc X$-creative} iff there exists a partial \compu\ function $\Phi$ such that for any $i, j, k \in \omega$, if ${\sf W}_k$ is a consistent extension of $U$, ${\sf W}_{i}$ is a subtheory of ${\sf W}_k$, and ${\sf W}_{i \mf p} \cap {\sf W}_{j} = \emptyset$, then $\Phi(i, j, k)$ converges, $\Phi(i, j, k) \in \mc X$, and $\Phi(i, j, k) \notin {\sf W}_{i \mf p} \cup {\sf W}_j$. 
\end{defn}

\begin{lem}\label{eehc1}
For any consistent \ce~theory $U$, the following are equivalent: 
\begin{enumerate}[i.]
	\item $U$ is effectively essentially hereditarily $\mc X$-creative. 
	\item There exists a partial \compu\ function $\Psi$ such that for any $i, j \in \omega$, if ${\sf W}_{i}$ is a theory consistent with $U$ and 
 ${\sf W}_{i \mf p} \cap {\sf W}_{j} = \emptyset$, then $\Psi(i, j)$ converges, $\Psi(i, j) \in \mc X$, and $\Psi(i, j) \notin {\sf W}_{i \mf p} \cup {\sf W}_{j}$. 
\end{enumerate}
\end{lem}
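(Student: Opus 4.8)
The plan is to prove the two implications by isolating the conceptual core: the hypothesis on the triple $(i,j,k)$ in (i) and the hypothesis on the pair $(i,j)$ in (ii) amount, modulo an effective choice of $k$, to the same constraint on ${\sf W}_i$. Concretely, for a \ce~theory ${\sf W}_i$ in the signature of $U$, the statement ``${\sf W}_i$ is a subtheory of some consistent extension of $U$'' is equivalent to ``${\sf W}_i$ is consistent with $U$''. Granting this observation, each direction reduces to plugging one witnessing function into the other.

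For ``(i) to (ii)'', I would take a witness $\Phi$ for (i) and define $\Psi$ as follows. Given $i$, using that $U$ is presented by a fixed \ce~index, I effectively find (by a padding/$s$-$m$-$n$ argument) an index $k$ with ${\sf W}_k = U \cup {\sf W}_i$, and set $\Psi(i,j) := \Phi(i,j,k)$. To see this works, suppose the hypothesis of (ii) holds, i.e.\ ${\sf W}_i$ is consistent with $U$ and ${\sf W}_{i\mf p} \cap {\sf W}_j = \emptyset$. Then ${\sf W}_k = U \cup {\sf W}_i$ is a consistent extension of $U$ and ${\sf W}_i$ is a subtheory of ${\sf W}_k$, so the hypothesis of (i) holds for $(i,j,k)$. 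Hence $\Phi(i,j,k)$ converges, lies in $\mc X$, and avoids ${\sf W}_{i\mf p} \cup {\sf W}_j$, which is exactly what is required of $\Psi(i,j)$.

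For ``(ii) to (i)'', I would take a witness $\Psi$ for (ii) and simply set $\Phi(i,j,k) := \Psi(i,j)$, discarding the third argument. The only thing to verify is that the hypothesis of (i) entails the hypothesis of (ii). So suppose ${\sf W}_k$ is a consistent extension of $U$, ${\sf W}_i$ is a subtheory of ${\sf W}_k$, and ${\sf W}_{i\mf p} \cap {\sf W}_j = \emptyset$. Every theorem of $U \cup {\sf W}_i$ is then a theorem of ${\sf W}_k$, and since ${\sf W}_k$ is consistent, $U \cup {\sf W}_i$ is consistent; that is, ${\sf W}_i$ is consistent with $U$. Combined with the disjointness, this is precisely the hypothesis of (ii), so $\Psi(i,j)$ converges with the required properties, and hence so does $\Phi(i,j,k)$.

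Neither direction presents a genuine obstacle. The two points deserving care are the effective passage from $i$ to an index $k$ of $U \cup {\sf W}_i$ in the first direction, which relies on $U$ being given by a fixed \ce~presentation so that the union with ${\sf W}_i$ is uniformly computable, and the elementary consistency observation in the second direction. I would also note in passing that all theories involved share the signature of $U$, so the restriction to extensions and subtheories ``in the same language'' is respected throughout.
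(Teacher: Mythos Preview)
Your proof is correct and follows essentially the same approach as the paper's: in the (i)$\to$(ii) direction you set $\Psi(i,j):=\Phi(i,j,k)$ for an effectively found index $k$ of $U\cup{\sf W}_i$ (the paper uses $(U\cup{\sf W}_i)_{\mf p}$, an inessential variation), and in the (ii)$\to$(i) direction you set $\Phi(i,j,k):=\Psi(i,j)$, exactly as the paper does. Your write-up is in fact more explicit than the paper's, which dispatches both verifications with ``It is easy to see''.
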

\begin{proof}
``(i) to (ii)''. Let $\Phi(i, j, k)$ be a partial \compu\ function witnessing the effective essential hereditary creativity of $U$. 
We define a partial \compu\ function $\Psi$ by $\Psi(i, j) : = \Phi(i, j, k)$, where $k$ is an effectively found index of $(U \cup {\sf W}_{i})_{\mf p}$. 
It is easy to see that $\Psi$ witnesses condition (ii). 

``(ii) to (i)''. Let $\Psi$ be a partial \compu\ function that witnesses condition (ii). 
We define a partial \compu\ function $\Phi$ by $\Phi(i, j, k) : = \Psi(i, j)$ for all $k$. 
It is easy to see that $\Phi$ witnesses the effective essential hereditary $\mc X$-creativity of $U$. 
\end{proof}

\begin{thm}\label{eehc2}
For any consistent \ce~theory $U$ and set $\mc X$ of sentences, the following are equivalent: 
\begin{enumerate}[a.]
	 \item $U$ is strongly effectively $\mc X$-inseparable.
\item $U$ is effectively essentially hereditarily $\mc X$-creative. 
\end{enumerate}
\end{thm}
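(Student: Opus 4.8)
The plan is to route everything through the two‑variable reformulation of (b) supplied by Lemma~\ref{eehc1}: it suffices to prove that (a) is equivalent to condition (ii) there, which asks for a partial \compu\ $\Psi$ so that, whenever ${\sf W}_i$ is consistent with $U$ and ${\sf W}_{i\mf p}\cap{\sf W}_j=\emptyset$, the value $\Psi(i,j)$ converges into $\mc X\setminus({\sf W}_{i\mf p}\cup{\sf W}_j)$. Unfolding the definitions, (a) says exactly that $U_{\mf p}$ and $(0_U)_{\mf r}$ are effectively $\mc X$‑inseparable, i.e. that some witness, for every pair ${\sf W}_i,{\sf W}_j$ weakly biseparating $U_{\mf p}$ and $(0_U)_{\mf r}$, produces an element of $\mc X$ outside ${\sf W}_i\cup{\sf W}_j$. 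Both notions thus produce a fresh $\mc X$‑sentence; the real content is to match the positive set $U_{\mf p}$ against the (deductively closed) theorem set ${\sf W}_{i\mf p}$ of a theory consistent with $U$, and the negative set $(0_U)_{\mf r}$ against an arbitrary disjoint ${\sf W}_j$. As in the source proofs, once the non‑total witness is built, the totalisation trick used in Theorem~\ref{effehr3} yields a total one.

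For ``(ii)$\Rightarrow$(a)'' I would mimic the self‑referential arguments of Theorem~\ref{pour-el} and of the double‑generativity theorem in Section~\ref{doublesmurf}. Given a weakly biseparating pair ${\sf W}_i,{\sf W}_j$ for $U_{\mf p}$ and $(0_U)_{\mf r}$, I use the Recursion Theorem with parameters (cf.~\cite[Theorem 3.5]{Soar87}) to build, from the creativity witness $\Psi$, a theory ${\sf W}_c$ and a set ${\sf W}_d$ that react to the candidate output $p:=\Psi(c,d)$ by a witness comparison of $p\in{\sf W}_i$ against $p\in{\sf W}_j$. When $p$ surfaces in the negative cover first I keep ${\sf W}_c=U$ and drop $p$ into ${\sf W}_d$ (safe, since ${\sf W}_j$ is disjoint from $U_{\mf p}$); when $p$ surfaces in the positive cover first I adjoin $p$ to the theory. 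Each reactive branch is designed so that $\Psi$'s guarantee $p\notin{\sf W}_{c\mf p}\cup{\sf W}_d$ collides with the construction, so that the only surviving fixed point is the inert branch, where ${\sf W}_c=U$ and $p\in\mc X\setminus(U_{\mf p}\cup{\sf W}_j)$ lies outside both covers; since the final theory is $U$, the output falls into the intended class $\mc X$, giving (a).

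For the converse ``(a)$\Rightarrow$(ii)'' I would feed the strong‑inseparability witness $\Phi$ the pair $V_{\mf p}$ and $(0_U)_{\mf r}\cup{\sf W}_d$, where $V:=U\cup{\sf W}_i$ is consistent (its index computed from that of ${\sf W}_i$). Here $V_{\mf p}\supseteq U_{\mf p}$ is disjoint from $(0_U)_{\mf r}$ and contains ${\sf W}_{i\mf p}$, so any output outside $V_{\mf p}$ already avoids ${\sf W}_{i\mf p}$; and those members of ${\sf W}_d$ that lie in $U_{\mf p}$ are automatically covered by the positive side, so only ${\sf W}_d\setminus V_{\mf p}$ must be placed into the negative cover. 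Since that set difference is not \compu, I would again invoke the Recursion Theorem to let the pair fed to $\Phi$ react to the single output $p$, routing $p$ into the negative cover only when a witness comparison of $p\in{\sf W}_d$ against $p\in V_{\mf p}$ declares it safe, so that the pair presented to $\Phi$ genuinely weakly biseparates $U_{\mf p}$ and $(0_U)_{\mf r}$.

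The main obstacle in both directions is this reconciliation of a deductively closed positive set with arbitrary cover sets, aggravated by two mismatches. First, the covers need not be deductively closed and may meet $U_{\mf p}$, so a static union breaks the disjointness that a weak biseparation demands, and the offending elements $U_{\mf p}\cap{\sf W}_d$ (resp.\ $U_{\mf p}\cap{\sf W}_i$) cannot be removed computably. Second, and more seriously, strong inseparability is phrased with the \emph{logical} refutables $(0_U)_{\mf r}$, whereas the extensions used by hereditary creativity must be consistent with $U$: a positive‑cover sentence that is merely $U$‑refutable but not logically refutable can be adjoined to $0_U$ yet not to $U$. Because none of membership in $U_{\mf p}$, in $U_{\mf r}$, or in $(0_U)_{\mf r}$ is decidable, no routing can be fixed in advance. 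The decisive point, which I expect to be the hard part, is to let the Recursion Theorem fixed point perform the decision: by comparing membership of the \emph{single} output in the positive cover with its membership in $U_{\mf r}$ (respectively $U_{\mf p}$)—in direct analogy with the double‑generativity proof, which compares $(\,\cdot\in U_{\mf p}\cup{\sf W}_x)$ with $(\,\cdot\in U_{\mf r}\cup{\sf W}_y)$ and adjoins the output or its negation to $U$—one arranges that every reactive branch contradicts the hypothesis, leaving only the branch that yields a genuine $\mc X$‑witness outside both sets. Checking that these constructed inputs meet the preconditions of the hypotheses (consistency with $U$ on one side, disjointness and weak biseparation on the other) for \emph{all} behaviours of the given witness is the delicate core of the proof.
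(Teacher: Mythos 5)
Your high-level plan---reduce (b) to condition (ii) of Lemma~\ref{eehc1}, then run a recursion-theorem fixed point against the witnessing function so that every reactive branch collides with the hypothesis and only the inert branch survives---is the shape of the paper's (b)-to-(a) argument. But there is a genuine gap, and it comes from the orientation of the inseparable pair. You read strong effective $\mc X$-inseparability as effective inseparability of $(U_{\mf p}, 0_{U\mf r})$ (admittedly, the definition in Section~2 literally reads this way), but the notion the theorem is about, and the one the paper's proof and the companion notion of strong double $\mc X$-generativity use, is effective inseparability of $(0_{U\mf p}, U_{\mf r})$: logical validities on the provable side, \emph{$U$-refutable} sentences on the refutable side. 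Without the witness constraint the two are interchangeable (conjugate under negation), but the $\mc X$-constrained notions are not, and with your orientation the fixed-point argument does not close. Concretely, in your (ii)$\Rightarrow$(a) direction, the branch ``$p$ surfaces in the positive cover first, so adjoin $p$ to the theory'' needs $U\cup\{p\}$ to be consistent before condition (ii) says anything; a cover of $U_{\mf p}$ is only guaranteed to avoid $0_{U\mf r}$, so $p$ may be $U$-refutable, the precondition fails, $\Psi$ is unconstrained on that input, and the scenario in which $\Psi$ outputs exactly such a $p$ is self-consistent --- no contradiction, so the bad fixed point survives. Your proposed repair (witness-comparing the output's membership in the cover against $U_{\mf r}$, resp.\ $U_{\mf p}$) cannot fix this: when $p$ lies in the cover but never shows up in either $U_{\mf p}$ or $U_{\mf r}$, the construction must already have committed at some finite stage to a configuration placing $p$ in the forbidden set, and every such commitment (theory slot or disjoint-set slot) is unsafe against a later enumeration of $p$ into $U_{\mf r}$, resp.\ $U_{\mf p}$. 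The mirror-image case (output in ${\sf W}_j\cap U_{\mf p}$, enumerated into ${\sf W}_j$ first) breaks your (a)$\Rightarrow$(ii) routing for the same reason.

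With the orientation $(0_{U\mf p}, U_{\mf r})$ both directions go through, and this is what the paper does. For (a)$\Rightarrow$(b) no recursion theorem is needed at all: the static pair ${\sf W}_{k_0}={\sf W}_{i\mf p}$, ${\sf W}_{k_1}=U_{\mf r}\cup{\sf W}_j$ already biseparates $0_{U\mf p}$ and $U_{\mf r}$, because the refutable-side cover only has to avoid $0_{U\mf p}$, and ${\sf W}_j\cap 0_{U\mf p}\subseteq{\sf W}_j\cap{\sf W}_{i\mf p}=\emptyset$; the elements of ${\sf W}_j\cap U_{\mf p}$ that wreck your version are simply harmless here. For (b)$\Rightarrow$(a) the paper's Double Recursion Theorem construction uses the configurations ${\sf W}_{k_0}=\{\phi\}$ or $\emptyset$, and ${\sf W}_{k_1}=\{\neg\,\phi\}_{\mf r}$ or $0_{U\mf r}$ --- note: the one-sentence theory $\{\phi\}$ and the \emph{empty} theory, not $U\cup\{\phi\}$ and $U$. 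These are exactly tuned to the cover properties: $\phi\in{\sf W}_i$ (a cover of $0_{U\mf p}$ avoiding $U_{\mf r}$) guarantees that $\{\phi\}$ is consistent with $U$, and $\phi\in{\sf W}_j$ (a cover of $U_{\mf r}$ avoiding $0_{U\mf p}$) guarantees that $\{\neg\,\phi\}_{\mf r}$ misses $0_{U\mf p}$; since the covers may be assumed disjoint, no witness comparison is needed at all. Your choices ``adjoin $p$ to $U$'' and ``keep ${\sf W}_c=U$'' would require the covers to avoid $U_{\mf r}$ and $U_{\mf p}$ respectively, which is precisely the guarantee the transposed orientation lacks; this is also where the hereditary character of (b) is genuinely used: the theories fed to $\Psi$ are tiny theories merely consistent with $U$, not extensions of $U$.
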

\begin{proof}
``(a) to (b)''. Let $\Phi$ be a partial \compu\ function that witnesses the strong effective $\mc X$-inseparability of $U$. 
Suppose that $U \cup {\sf W}_i$ is consistent and ${\sf W}_{i \mf p} \cap {\sf W}_j = \emptyset$. 
we can effectively find numbers $k_0$ and $k_1$ from $i$ and $j$ such that ${\sf W}_{k_0} = {\sf W}_{i \mf p}$ and ${\sf W}_{k_1} = U_{\mf r} \cup {\sf W}_j$. 
Clearly, ${\sf W}_{k_0}$ and ${\sf W}_{k_1}$ biseparate $0_{U \mf p}$ and $U_{\mf r}$. 
By strong effective $\mc X$-inseparability,  $\Phi(k_0, k_1)$ converges, $\Phi(k_0, k_1) \in \mc X$, and $\Phi(k_0, k_1) \notin {\sf W}_{k_0} \cup {\sf W}_{k_1}$. 
We take $\Psi(i, j) : = \Phi(k_0, k_1)$. 
Then, $\Psi$ satisfies Condition (ii) of Lemma~\ref{eehc1}, and, hence, $U$ is effectively essentially hereditarily $\mc X$-creative. 

``(b) to (a)''. Suppose that $U$ is effectively essentially hereditarily $\mc X$-creative. 
Let $\Psi$ be a partial \compu\ function that witnesses Condition (ii) of Lemma~\ref{eehc1}. 
Suppose that ${\sf W}_{i}$ and ${\sf W}_{j}$ 
biseparate $0_{U \mf p}$ and $U_{\mf r}$. 
By the Double Recursion Theorem with parameters (cf.~\cite[Exercise 3.15.(b)]{Soar87}), we can effectively find numbers $k_0$ and $k_1$ from $i$ and $j$ such that
\begin{itemize}
	\item ${\sf W}_{k_0} = \begin{cases} \{\phi\} & \text{if}\ \Psi(k_0, k_1) \simeq \phi\ \text{and}\ 
\phi \in {\sf W}_{i}, \\
	\emptyset & \text{otherwise.}\end{cases}$

	\item ${\sf W}_{k_1} = \begin{cases} \{\neg\, \phi\}_{\mf r} & \text{if}\ \Psi(k_0, k_1) \simeq \phi\ \text{and}\ 
 \phi \in {\sf W}_{j},\\
	0_{U \mf r} & \text{otherwise.}\end{cases}$
\end{itemize}

Suppose, towards a contradiction, that $\Psi(k_0, k_1) \simeq \phi$ and $\phi \in {\sf W}_{i} \cup {\sf W}_{j}$. 
\begin{itemize}
    \item  If 
    $\phi \in {\sf W}_{i}$,
    then ${\sf W}_{k_0} = \{\phi\}$ and ${\sf W}_{k_1} = 0_{U \mf r}$. 
    Since ${\sf W}_i \cap U_{\mf r} = \emptyset$, we have $\phi \notin U_{\mf r}$, and hence $U \cup {\sf W}_{k_0}$ is consistent. 
    If $\xi \in {\sf W}_{k_0 \mf p} \cap {\sf W}_{k_1}$ for some $\xi$, then $\phi \vdash \xi$ and $0_U \vdash \neg\, \xi$. 
    This contradicts $\phi \notin U_{\mf r}$. 
    Hence, ${\sf W}_{k_0 \mf p} \cap {\sf W}_{k_1} = \emptyset$, so we obtain $\phi = \Psi(k_0, k_1) \notin {\sf W}_{k_0 \mf p} \cup {\sf W}_{k_1}$. 
    This contradicts $\phi \in {\sf W}_{k_0 \mf p}$. 

    \item If 
    $\phi \in {\sf W}_{j}$, then ${\sf W}_{k_0} = \emptyset$ and ${\sf W}_{k_1} = \{\neg\, \phi\}_{\mf r}$. 
    If $\xi \in {\sf W}_{k_0 \mf p} \cap {\sf W}_{k_1}$ for some $\xi$, then $0_U \vdash \xi$ and $\neg\, \phi \vdash \neg\, \xi$. 
    We obtain $\phi \in 0_{U\mf p}$, and this contradicts ${\sf W}_j \cap 0_{U\mf p} = \emptyset$. 
    Thus, we have ${\sf W}_{k_0 \mf p} \cap {\sf W}_{k_1} = \emptyset$. 
    Since $U \cup {\sf W}_{k_0} = U$ is consistent, we obtain $\phi \notin {\sf W}_{k_0 \mf p} \cup {\sf W}_{k_1}$. 
    This contradicts $\phi \in {\sf W}_{k_1}$. 
\end{itemize}

    We have shown ${\sf W}_{k_0} = \emptyset$ and ${\sf W}_{k_1} = 0_{U \mf r}$. 
    Since $U \cup {\sf W}_{k_0}$ is consistent and ${\sf W}_{k_0 \mf p} \cap {\sf W}_{k_1} = \emptyset$, 
    we obtain that $\Psi(k_0, k_1)$ converges and $\Psi(k_0, k_1) \in \mc X$. 
    We have also shown $\Psi(k_0, k_1) \notin {\sf W}_i \cup {\sf W}_j$. 
    We take $\Phi(i, j) : = \Psi(k_0, k_1)$. Thus, $\Phi$ witnesses the strong effective $\mc X$-inseparability of $U$. 
\end{proof}

We note that the proof in the (b) to (a) direction only employs finitely axiomatised theories.

We also study an adapted version of the result established in Subsection \ref{doublesmurf}. 
We say that a theory $U$ is \textit{strongly doubly $\mc X$-generative} iff there exists a total \compu\ function $\Phi$ such that for any $i, j \in \omega$, if ${\sf W}_i \cap {\sf W}_j = \emptyset$, then 
\begin{itemize}
    \item $\Phi(i, j) \in 0_{U \mf p}$ iff $\Phi(i, j) \in {\sf W}_j$,
    \item $\Phi(i, j) \in U_{\mf r}$ iff $\Phi(i, j) \in {\sf W}_i$, 
    \item if $\Phi(i, j) \notin {\sf W}_i \cup {\sf W}_j$, then $\Phi(i, j) \in \mc X$. 
\end{itemize}

\begin{thm}\label{eehc3}
For any consistent \ce~theory $U$, the following are equivalent: 
\begin{enumerate}[a.]
    \item $U$ is strongly doubly $\mc X$-generative. 
 \item $U$ is effectively essentially hereditarily $\mc X$-creative.
\end{enumerate}
\end{thm}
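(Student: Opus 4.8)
The plan is to prove the two implications separately. Throughout I invoke Lemma~\ref{eehc1}, replacing effective essential hereditary $\mc X$-creativity by its two-argument form (ii): a partial \compu\ $\Psi$ with $\Psi(i,j)\in \mc X$ and $\Psi(i,j)\notin {\sf W}_{i\mf p}\cup {\sf W}_j$ whenever ${\sf W}_i$ is consistent with $U$ and ${\sf W}_{i\mf p}\cap {\sf W}_j=\emptyset$.

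\emph{Direction ``(a) to (b)''.} This I expect to be a direct reduction, with no recursion theorem needed. Let $\Phi$ be a total witness of strong double $\mc X$-generativity, and suppose $U\cup {\sf W}_a$ is consistent and ${\sf W}_{a\mf p}\cap {\sf W}_b=\emptyset$. I would feed the \emph{crossed} pair ${\sf W}_i:={\sf W}_{a\mf p}$, ${\sf W}_j:={\sf W}_b$ into $\Phi$ (legitimate, since these are disjoint and we can find their indices effectively), and set $\Psi(a,b):=\Phi(i,j)$. The generativity clause ``$\Phi(i,j)\in 0_{U\mf p}$ iff $\Phi(i,j)\in {\sf W}_j$'' makes membership of $\Phi(i,j)$ in ${\sf W}_b$ entail $\Phi(i,j)\in 0_{U\mf p}\subseteq {\sf W}_{a\mf p}$, which is barred by ${\sf W}_{a\mf p}\cap {\sf W}_b=\emptyset$; the clause ``$\Phi(i,j)\in U_{\mf r}$ iff $\Phi(i,j)\in {\sf W}_i$'' makes membership in ${\sf W}_{a\mf p}$ entail $U$-refutability, contradicting the consistency of $U\cup {\sf W}_a$. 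So $\Phi(i,j)$ lies outside ${\sf W}_{a\mf p}\cup {\sf W}_b$, whence the third clause yields $\Phi(i,j)\in \mc X$. By Lemma~\ref{eehc1}, (b) follows.

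\emph{Direction ``(b) to (a)''.} This is the substantial half: a \emph{total} function obeying the two biconditionals must be produced from the merely partial witness $\Psi$ of Lemma~\ref{eehc1}(ii). I would follow the template of the ``(b) to (a)'' argument of Theorem~\ref{eehc2} and the ``(c) to (a)'' argument of the double-generativity theorem of Subsection~\ref{doublesmurf}. Given $i,j$ with ${\sf W}_i\cap {\sf W}_j=\emptyset$, use the Double Recursion Theorem with parameters (cf.~\cite[Exercise 3.15.(b)]{Soar87}) to find $k_0,k_1$ with $\phi:=\Psi(k_0,k_1)$ and
\[
{\sf W}_{k_0}=\begin{cases}\{\phi\}&\text{if }\Psi(k_0,k_1)\simeq \phi\text{ and }\phi\in {\sf W}_i,\\ \emptyset&\text{otherwise},\end{cases}
\qquad
{\sf W}_{k_1}=\begin{cases}\{\neg\,\phi\}_{\mf r}\cup U_{\mf r}&\text{if }\Psi(k_0,k_1)\simeq \phi\text{ and }\phi\in {\sf W}_j,\\ U_{\mf r}&\text{otherwise},\end{cases}
\]
and put $\Phi(i,j):=\Psi(k_0,k_1)$.

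The one real departure from Theorem~\ref{eehc2} --- and the step I expect to be the crux --- is that the \emph{default} refutation set here is $U_{\mf r}$, not $0_{U\mf r}$. This is forced by the biconditional ``$\Phi(i,j)\in U_{\mf r}$ iff $\Phi(i,j)\in {\sf W}_i$'': in the case $\phi\notin {\sf W}_i\cup {\sf W}_j$ I must guarantee $\phi\notin U_{\mf r}$, and only a default set as large as $U_{\mf r}$ delivers this from $\Psi$'s guarantee $\phi\notin {\sf W}_{k_0\mf p}\cup {\sf W}_{k_1}$. That this choice keeps $\Psi$'s preconditions available rests on $0_{U\mf p}\cap U_{\mf r}=\emptyset$, which is just the consistency of $U$. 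The remaining verifications are then routine: totality, because divergence of $\Psi(k_0,k_1)$ would leave ${\sf W}_{k_0}=\emptyset$ and ${\sf W}_{k_1}=U_{\mf r}$, a pair meeting $\Psi$'s preconditions and hence forcing convergence; in the case $\phi\in {\sf W}_i$ the consistent subcase is impossible (it would place $\phi$ in ${\sf W}_{k_0\mf p}$ against $\Psi$'s guarantee), so $\phi\in U_{\mf r}$; symmetrically $\phi\in {\sf W}_j$ forces $\phi\in 0_{U\mf p}$ via $\phi\in\{\neg\,\phi\}_{\mf r}\subseteq {\sf W}_{k_1}$; and $\phi\notin {\sf W}_i\cup {\sf W}_j$ yields $\phi\in \mc X$ together with $\phi\notin 0_{U\mf p}\cup U_{\mf r}$. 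A short propositional check then confirms the three generativity clauses in each case, so $\Phi$ witnesses strong double $\mc X$-generativity. As in Theorem~\ref{eehc2}, this construction uses only finitely axiomatised auxiliary theories.
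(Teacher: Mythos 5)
Your proof is correct in both directions. The (a)-to-(b) half is, in substance, the paper's own argument: feed the pair $({\sf W}_{a\mf p},{\sf W}_b)$ to the generativity witness, use $0_{U\mf p}\subseteq{\sf W}_{a\mf p}$ and $U_{\mf r}\cap{\sf W}_{a\mf p}=\emptyset$ to exclude both memberships, and invoke Lemma~\ref{eehc1}.

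For (b)-to-(a) you follow the same high-level template as the paper --- a diagonalisation, via the recursion theorem with parameters, of two auxiliary \ce~sets against the two-argument witness $\Psi$ of Lemma~\ref{eehc1}(ii) --- but your implementation of the diagonalisation genuinely differs, and in an instructive way. The paper triggers its auxiliary sets on the \emph{composite} events $\Phi^*\in 0_{U\mf p}\cup{\sf W}_x$ and $\Phi^*\in U_{\mf r}\cup{\sf W}_y$; since these can overlap even when ${\sf W}_x\cap{\sf W}_y=\emptyset$, the paper needs witness comparison ($\leq$, $<$) to make its cases exclusive, and in exchange it can keep the default pair small ($\emptyset$ and $0_{U\mf r}$), because in the residual case the crucial fact $\Phi^*\notin U_{\mf r}$ comes directly from the case hypothesis. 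You trigger instead on the bare events $\phi\in{\sf W}_i$ and $\phi\in{\sf W}_j$, which are mutually exclusive under the disjointness hypothesis, so no witness comparison is needed; the price, which you correctly isolate as the crux, is that the residual case then only yields $\phi\notin{\sf W}_{k_0\mf p}\cup{\sf W}_{k_1}$, so the default second component must be inflated to $U_{\mf r}$ in order to read off $\phi\notin U_{\mf r}$ from $\Psi$'s guarantee. Both resolutions are sound; your case verifications check out, and your self-correcting totality argument spares you the paper's preliminary step ``we may assume $\Psi$ is total''. One caveat concerning your closing sentence: in the paper's construction the second components $0_{U\mf r}$ and $\{\neg\,\Phi^*\}_{\mf r}$ are refutation sets of finitely axiomatised theories, whereas yours, $U_{\mf r}$ and $\{\neg\,\phi\}_{\mf r}\cup U_{\mf r}$, are refutation sets of theories containing all of $U$; so the ``only finitely axiomatised auxiliary theories'' feature survives only for the first components $\emptyset$ and $\{\phi\}$ --- this loss is precisely the cost of your enlarged default.
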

\begin{proof}
``(a) to (b)''. Suppose that a total \compu\ function $\Phi$ witnesses the strong double $\mc X$-generativity of $U$. 
Suppose that ${\sf W}_i$ is a theory consistent with $U$ and ${\sf W}_{i \mf p} \cap {\sf W}_j = \emptyset$. 
We can effectively find $k$ from $i$ such that ${\sf W}_{k} ={\sf W}_{i \mf p}$. 
Since ${\sf W}_{k} \cap {\sf W}_{j} = \emptyset$, we have that $\Phi(k, j) \in 0_{U \mf p}$ iff $\Phi(k, j) \in {\sf W}_{j}$; and $\Phi(k, j) \in U_{\mf r}$ iff $\Phi(k, j) \in {\sf W}_{i \mf p}$. 
Since $0_{U \mf p} \cap {\sf W}_{j} = U_{\mf r} \cap {\sf W}_{i \mf p} = \emptyset$, we obtain $\Phi(k, j) \notin {\sf W}_{i \mf p} \cup {\sf W}_{j}$ and $\Phi(k, j) \in \mc X$. 
We take $\Psi(i, j) : = \Phi(k, j)$. Thus, $\Psi$ satisfies Condition (ii) of Lemma~\ref{eehc1}. 
So, $U$ is effectively essentially hereditarily $\mc X$-creative. 

``(b) to (a)''. Suppose that $U$ is effectively essentially hereditarily $\mc X$-creative. 
Let $\Psi$ be a partial \compu\ function that witnesses Condition (ii) of Lemma~\ref{eehc1}. 
We may assume that $\Psi$ is a total function. 
By the Recursion Theorem with parameters, there exist total \compu\ functions $\Theta_0$ and $\Theta_1$ such that, setting
 $\Phi^*(x, y) : = \Psi(\Theta_0(x, y), \Theta_1(x, y))$, we have:
{\small
\[
	{\sf W}_{\Theta_0(x, y)} = \begin{cases} 
    \{\Phi^*(x, y)\} & \ \text{if}\ \bigl(\Phi^*(x, y) \in (0_{U \mf p} \cup {\sf W}_{x}) \bigr) \leq \bigl(\Phi^*(x, y) \in (U_{\mf r} \cup {\sf W}_{y}) \bigr),\\
	\emptyset & \ \text{otherwise}. \end{cases}
\]
\[
	{\sf W}_{\Theta_1(x, y)} = \begin{cases} \{\neg\, \Phi^*(x, y)\}_{\mf r}  &
 \ \text{if}\ \bigl(\Phi^*(x, y) \in (U_{\mf r} \cup {\sf W}_{y}) \bigr) < \bigl(\Phi^*(x, y) \in (0_{U \mf p} \cup {\sf W}_{x}) \bigr), \\
	0_{U \mf r} & \ \text{otherwise}. \end{cases}
\]
}%
We show that $\Phi^*$ witnesses the strong double $\mc X$-generativity of $U$.
Let $i$ and $j$ be such that ${\sf W}_i \cap {\sf W}_j = \emptyset$. We set $\phi^\ast :=\Phi^*(i, j)$.
\newcommand{\phia}{\phi^\ast}

\begin{itemize}
	\item Suppose $\bigl(\phia \in (0_{U \mf p} \cup {\sf W}_{i}) \bigr) \leq \bigl(\phia \in (U_{\mf r} \cup {\sf W}_{j}) \bigr)$ holds. 
    Then, ${\sf W}_{\Theta_0(i, j)} = \{\phia\}$ and ${\sf W}_{\Theta_1(i, j)} = 0_{U \mf r}$. 
 Since, \[\Psi(\Theta_0(i, j), \Theta_1(i, j)) = \phia \in 
 {\sf W}_{\Theta_0(i, j) \mf p} \subseteq {\sf W}_{\Theta_0(i, j) \mf p} \cup {\sf W}_{\Theta_1(i, j)},\] 
 by  Condition (ii) of Lemma~\ref{eehc1}, we have that (I) ${\sf W}_{\Theta_0(i, j)}$ is inconsistent with $U$ or 
 (II) ${\sf W}_{\Theta_0(i, j) \mf p} \cap {\sf W}_{\Theta_1(i, j)} \neq \emptyset$. 
 If (I) holds, then $\phia \in U_{\mf r}$. 
 If (II) holds, then $\phia \vdash \xi$ and $0_U \vdash \neg\, \xi$, for some $\xi$, and hence $\phia \in 0_{U \mf r} \subseteq U_{\mf r}$. 
 Therefore, in either case,  $\phia \in U_{\mf r}$. 
 Since, $0_{U \mf p} \cap U_{\mf r} = {\sf W}_i \cap {\sf W}_j = \emptyset$ and 
 $\phia \in {\sf W}_i \cup 0_{U \mf p}$, we obtain 
 $\phia \notin 0_{U \mf p} \cup {\sf W}_j$ and $\phia \in U_{\mf r} \cap {\sf W}_i$. 
 
 	\item Suppose $\bigl(\phia \in (U_{\mf r} \cup {\sf W}_{j}) \bigr) < \bigl(\phia \in (0_{U \mf p} \cup {\sf W}_{i}) \bigr)$ holds.
    Then, ${\sf W}_{\Theta_0(i, j)} = \emptyset$ and ${\sf W}_{\Theta_1(i, j)} = \{\neg\, \phia \}_{\mf r}$. 
 Since ${\sf W}_{\Theta_0(i, j)}$ is consistent with $U$ and 
 \[\Psi(\Theta_0(i, j), \Theta_1(i, j)) = \phia \in {\sf W}_{\Theta_1(i, j)} 
 \subseteq {\sf W}_{\Theta_0(i, j) \mf p} \cup {\sf W}_{\Theta_1(i, j)},\] 
 by Condition (ii) of Lemma~\ref{eehc1}, 
 we have that $\xi \in {\sf W}_{\Theta_0(i, j) \mf p} \cap {\sf W}_{\Theta_1(i, j)}$ for some $\xi$. 
Then $0_U \vdash \xi$ and $\neg\, \phia \vdash \neg\, \xi$, and hence $\phia \in 0_{U \mf p}$. 
 Since, $0_{U \mf p} \cap U_{\mf r} = {\sf W}_i \cap {\sf W}_j = \emptyset$ and $\phia \in  U_{\mf r}\cup {\sf W}_j$, 
 we obtain $\phia \in 0_{U \mf p} \cap {\sf W}_j$ and $\phia \notin U_{\mf r} \cup {\sf W}_i$. 

    \item Otherwise, $\phia \notin 0_{U \mf p} \cup {\sf W}_j$ and $\phia \notin U_{\mf r} \cup {\sf W}_i$.
    In this case, ${\sf W}_{\Theta_0(i, j)} = \emptyset$ and ${\sf W}_{\Theta_1(i, j)} = 0_{U \mf r}$. 
    Since ${\sf W}_{\Theta_0(i, j)}$ is consistent with $U$ and 
    ${\sf W}_{\Theta_0(i, j) \mf p} \cap {\sf W}_{\Theta_1(i, j)} = \emptyset$, 
    we obtain $\phia = \Psi(\Theta_0(i, j), \Theta_1(i, j)) \in \mc X$. 
\end{itemize}
Thus, $\Phi^*$ witnesses the strong double $\mc X$-generativity of $U$. 
\end{proof}

We prove a closure property for strong effective inseparability.
For a theory $U$ and formula classes $\mc X$ and $\mc Y$, we write:
\begin{itemize}
\item
$\mc X \wedge \mc Y$ for $\verz{(\phi \wedge \psi)\mid \phi \in \mc X\text{ and } \psi \in \mc Y}$. 
\item
$U \domi \mc X$ iff, for all $\phi\in \mc X$, we have $\phi \vdash U$.
\end{itemize}

\begin{thm}\label{domi}
Suppose $ U \domi  {\mc X}$. 
Suppose further that $U$ is $\mc X$-creative and effectively $\mc Y$-inseparable. 
Then, $U$ is strongly effectively $\mc X\wedge \mc Y$-inseparable.
\end{thm}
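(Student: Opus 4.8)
The plan is to prove the effective $\mc X\wedge\mc Y$-inseparability of the pair $(U,0_U)$ directly. So I would fix indices $i,j$ such that ${\sf W}_i,{\sf W}_j$ weakly biseparate $U_{\mf p}$ and $(0_U)_{\mf r}$; thus $U_{\mf p}\subseteq{\sf W}_i$, $(0_U)_{\mf r}\subseteq{\sf W}_j$, ${\sf W}_i\cap(0_U)_{\mf r}=\emptyset$ and ${\sf W}_j\cap U_{\mf p}=\emptyset$, and aim to produce, effectively in $i,j$, a sentence $\chi\wedge\psi$ with $\chi\in\mc X$, $\psi\in\mc Y$ and $\chi\wedge\psi\notin{\sf W}_i\cup{\sf W}_j$. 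Two tools are available: a total witness $\Phi_0$ for the $\mc X$-creativity of $U$ (so that, whenever ${\sf W}_a\cap U_{\mf p}=\emptyset$, $\Phi_0(a)\in\mc X$ and $\Phi_0(a)\notin U_{\mf p}\cup{\sf W}_a$), and — via Corollary~\ref{cor_pour-el} applied to the effective $\mc Y$-inseparability of $U$ — a witness $\Phi_1$ for the effective essential $\mc Y$-incompleteness of $U$ (so that, whenever ${\sf W}_k$ is a consistent extension of $U$, $\Phi_1(k)\in\mc Y$ is independent of ${\sf W}_k$). The point of the hypothesis $U\domi\mc X$ is that every value $\chi:=\Phi_0(a)\in\mc X$ then satisfies $\chi\vdash U$, while $\chi\notin{\sf W}_a\supseteq(0_U)_{\mf r}$ makes $\chi$ consistent.

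Next, by the Double Recursion Theorem with parameters (cf.~\cite[Exercise 3.15.(b)]{Soar87}) I would define two indices $a,k^\ast$ as computable functions of $i,j$, put $\chi:=\Phi_0(a)$ and $\psi:=\Phi_1(k^\ast)$, and govern them by
\[
{\sf W}_a:={\sf W}_j\cup\begin{cases}\{\chi\}&\text{if }\chi\wedge\psi\in{\sf W}_j\text{ and }\psi\in U_{\mf p},\\\emptyset&\text{otherwise,}\end{cases}
\]
and
\[
{\sf W}_{k^\ast}:=\begin{cases}U\cup\{\psi\}&\text{if }(\chi\wedge\psi\in{\sf W}_i)\le(\chi\wedge\psi\in{\sf W}_j),\\U\cup\{\neg\,\psi\}&\text{if }(\chi\wedge\psi\in{\sf W}_j)<(\chi\wedge\psi\in{\sf W}_i),\\U&\text{if }\chi\wedge\psi\notin{\sf W}_i\cup{\sf W}_j.\end{cases}
\]
The sole job of ${\sf W}_a$ is to police the ${\sf W}_j$-side, and the key observation is that it is \emph{unconditionally} a legitimate creativity index. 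Indeed ${\sf W}_j\cap U_{\mf p}=\emptyset$, and in the one clause that adjoins $\chi$ we have $\chi\wedge\psi\in{\sf W}_j$ (so $U\nvdash\chi\wedge\psi$) together with $\psi\in U_{\mf p}$ (so $U\vdash\psi$), whence $U\nvdash\chi$, i.e.\ $\chi\notin U_{\mf p}$; this is a timing-independent logical fact, so ${\sf W}_a\cap U_{\mf p}=\emptyset$ holds outright. Creativity therefore applies and yields $\chi\notin{\sf W}_a$, so the adjoining clause is never active, giving the crucial implication: \emph{if $\chi\wedge\psi\in{\sf W}_j$ then $\psi\notin U_{\mf p}$}.

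I would then finish with the case analysis dictated by the three clauses of ${\sf W}_{k^\ast}$, in the style of the proof of Theorem~\ref{pour-el}. In the first clause $\chi\wedge\psi\in{\sf W}_i$; since ${\sf W}_i\cap(0_U)_{\mf r}=\emptyset$ the sentence $\chi\wedge\psi$ is satisfiable, and as $\chi\vdash U$ any of its models models $U\cup\{\psi\}$, so ${\sf W}_{k^\ast}=U\cup\{\psi\}$ is a consistent extension of $U$ — but then $\Phi_1(k^\ast)=\psi$ must be independent of it, contradicting ${\sf W}_{k^\ast}\vdash\psi$. In the second clause $\chi\wedge\psi\in{\sf W}_j$, so by the crucial implication $\psi\notin U_{\mf p}$ and hence ${\sf W}_{k^\ast}=U\cup\{\neg\,\psi\}$ is consistent; again $\psi$ would have to be independent of it, contradicting ${\sf W}_{k^\ast}\vdash\neg\,\psi$. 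Thus only the third clause survives: $\chi\wedge\psi\notin{\sf W}_i\cup{\sf W}_j$, while ${\sf W}_{k^\ast}=U$ is consistent, so $\psi\in\mc Y$ and $\chi\in\mc X$. Setting $\Phi(i,j):=\chi\wedge\psi$ then witnesses the strong effective $\mc X\wedge\mc Y$-inseparability of $U$.

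I expect the main obstacle to be exactly the ${\sf W}_j$-side, and it is worth isolating why. Because ${\sf W}_j$ need only contain $(0_U)_{\mf r}$ and not all of $U_{\mf r}$, the naive move of handing the conjunction-preimage $\{\theta\mid\chi\wedge\theta\in{\sf W}_j\}$ to $\Phi_1$ fails: this preimage can meet $U_{\mf p}$ (for $\theta\in U_{\mf p}$ one has $\chi\wedge\theta\equiv\chi\notin U_{\mf p}$, so nothing forbids $\chi\wedge\theta\in{\sf W}_j$), and no witness-comparison device can excise a c.e.\ set from a c.e.\ set against an adversarial enumeration. The self-referential creativity index ${\sf W}_a$ is what breaks this: it reacts \emph{only} in the configuration $\chi\wedge\psi\in{\sf W}_j$ \emph{and} $\psi\in U_{\mf p}$, which — thanks to $\chi\vdash U$ — is precisely the configuration forcing $\chi\notin U_{\mf p}$, so ${\sf W}_a$ remains a valid creativity index and the creativity witness itself refutes that configuration. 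By the usual totalisation device one may assume $\Phi_0,\Phi_1$ total, so that $\chi,\psi$ are always defined.
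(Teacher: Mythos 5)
Your proof is correct, but it takes a genuinely different route from the paper's. The paper's own proof is a pure reduction argument, with no recursion theorem and no appeal to Pour-El's equivalence: it applies $\mc X$-creativity to the deductively defined set $\mc W := \verz{\psi \mid \exists \phi \in {\sf W}_j\; U \cup \verz{\psi} \vdash \phi}$, which is disjoint from $U_{\mf p}$ by the separation hypothesis, obtaining $\phi^\ast \in \mc X$ with $\phi^\ast \notin U_{\mf p} \cup \mc W$ and hence $\verz{\phi^\ast}_{\mf p} \cap {\sf W}_j = \emptyset$; then, using $\phi^\ast \vdash U$, it shows that the conjunction preimages ${\sf W}_{i^\ast} := \verz{\psi \mid (\phi^\ast \wedge \psi) \in {\sf W}_i} \cup \verz{\phi^\ast}_{\mf p}$ and ${\sf W}_{j^\ast} := \verz{\psi \mid (\phi^\ast \wedge \psi) \in {\sf W}_j}$ weakly biseparate $U_{\mf p}$ and $U_{\mf r}$, so that the effective $\mc Y$-inseparability witness applies \emph{directly}, and $\phi^\ast \wedge \psi^\ast$ is the output. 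You instead police the ${\sf W}_j$-side with a self-referential creativity index and handle the $\mc Y$-side with a Rosser-style witness-comparison extension; this costs you the Double Recursion Theorem plus the detour through Corollary~\ref{cor_pour-el} to trade inseparability for (if-)essential incompleteness. The paper's route buys economy of means and freedom from convergence bookkeeping, since its two witnesses are only ever applied to sets that demonstrably satisfy their hypotheses; on the other hand, it assumes the input pair is genuinely biseparating (it uses ${\sf W}_i \cap {\sf W}_j = \emptyset$, licensed by the earlier proposition on biseparation), whereas your argument runs on weak biseparation as given and stays in the diagonal style of Theorems~\ref{pour-el}, \ref{eehc2} and~\ref{eehc3}. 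Two small caveats on your write-up. First, full totalisation of $\Phi_1$ is not literally available, since non-extensions of $U$ cannot be effectively recognised; but your ${\sf W}_{k^\ast}$ is unconditionally an if-extension of $U$, and on such indices the usual parallel search for an inconsistency — or your implicit bootstrap: were $\psi$ undefined, ${\sf W}_{k^\ast} = U$ would be a consistent extension of $U$, forcing $\Phi_1(k^\ast)$ to converge — does yield convergence, so nothing breaks. Second, in your closing commentary, the fact that the configuration $\chi \wedge \psi \in {\sf W}_j$, $\psi \in U_{\mf p}$ forces $\chi \notin U_{\mf p}$ uses only ${\sf W}_j \cap U_{\mf p} = \emptyset$ and closure of $U_{\mf p}$ under conjunction, not $\chi \vdash U$; the hypothesis $U \domi \mc X$ enters exactly once, in your first clause, to pass from the satisfiability of $\chi \wedge \psi$ to the consistency of $U \cup \verz{\psi}$.
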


\begin{proof}
Suppose $U \domi  \mc X$. Let $U$ be $\mc X$-creative and effectively $\mc Y$-inseparable. 
Suppose ${\sf W}_i$ and ${\sf W}_j$ bi-separate $U_{\mf p}$ and $0_{U\mf r}$.

Let $\mc W :=  \verz{\psi \mid \exists \phi \in {\sf W}_j\; U \cup \{\psi\} \vdash \phi}$. 
Suppose $\psi \in U_{\mf p}\cap\mc W$. Then, for some $\phi \in {\sf W}_j$, we have $U \cup \{\psi\} \vdash  \phi$ and $U \vdash \psi$. So, $U \vdash \phi$. \emph{Quod non}.
By $\mc X$-creativity,
we can effectively find a $\phi^\ast\in \mc X$ such that $\phi^\ast\not\in U_{\mf p} \cup \mc W$.     
We claim that (\dag) $\verz{\phi^\ast}_{\mf p} \cap {\sf W}_j = \emptyset$. If not, $\phi^\ast \vdash \chi$, for some $\chi\in {\sf W}_j$.
So, \emph{a fortiori},  $\phi^\ast\in\mc W$. \emph{Quod non}.

Let ${\sf W}_{i^\ast} := \verz{\psi \mid (\phi^\ast \wedge \psi) \in {\sf W}_i}\cup \verz{\phi^\ast}_{\mf p}$ and 
${\sf W}_{j^\ast} := \verz{\psi \mid (\phi^\ast \wedge \psi) \in {\sf W}_j}$.
Suppose $\psi \in {\sf W}_{i^\ast} \cap {\sf W}_{j^\ast}$. Then $\phi^\ast \vdash \psi$ and $(\phi^\ast \wedge \psi) \in {\sf W}_j$.
It follows that $(\phi^\ast\wedge \psi) \in  \verz{\phi^\ast}_{\mf p} \cap {\sf W}_j $. \emph{Quod non}, by (\dag).
Since $\phi^\ast\in \mc X$, we find that $U_{\mf p} \subseteq \verz{\phi^\ast}_{\mf p} \subseteq {\sf W}_{i^\ast}$.
We have:
\begin{eqnarray*}
\phi \in U_{\mf r} & \To & \phi^\ast \vdash \neg\,\phi \\
& \To & 0_U \vdash \neg \,(\phi^\ast \wedge \phi) \\
& \To & (\phi^\ast \wedge \phi)\in 0_{U\mf r} \\
& \To & (\phi^\ast \wedge \phi)\in {\sf W}_j \\
& \To & \phi\in {\sf W}_{j^\ast}
\end{eqnarray*}
So $U_{\mf r} \subseteq {\sf W}_{j^\ast}$.

We can effectively find a $\psi^\ast\in \mc Y$, such that $\psi^\ast \not\in {\sf W}_{i^\ast}\cup {\sf W}_{j^\ast}$.  So, $(\phi^\ast \wedge\psi^\ast) \not\in {\sf W}_i \cup{\sf W}_j$
and $(\phi^\ast \wedge\psi^\ast)\in \mc X \wedge \mc Y$. 
\end{proof}


\begin{ex}
The strong effective inseparability of the theory $\RR$ was proved by Vaught \cite[5.2]{Vau62}. 
In \cite[Theorem 2.4]{KV23}, the authors provided new proofs of this fact. 
Here, we give a proof of this fact in terms of Theorem~\ref{domi}, based on the method developed in~\cite{KV23}.
In \cite[Generalised Certified Extension Theorem]{KV23}, it is proved that for any $\Sigma_1$-sentence
$\sigma$, we can effectively find a sentence $[\sigma]$ satisfying the following conditions:
\begin{enumerate}[i.]
    \item $[\sigma] \vdash \sigma$, 
    \item if $\sigma$ is true, then $\RR \vdash [\sigma]$, 
    \item if $\sigma$ is false, then $[\sigma] \vdash \RR$. 
\end{enumerate}
Let $\mc X = \{\phi \mid \phi \vdash \RR\}$, then $\RR \domi \mc X$. 
Since $\RR$ is not finitely axiomatisable, we have $\RR_{\mf p} \cap \mc X = \emptyset$. 
We show that $\RR$ is $\mc X$-creative. 
Let ${\sf W}_i$ be any \ce~set disjoint from $\RR_{\mf p}$. 
From $i$, we effectively find a $\Sigma_1$ sentence $\rho$ satisfying
\[
    \RR \vdash \rho \leftrightarrow [\rho] \in {\sf W}_i.
\]
If $\rho$ were true, then $[\rho] \in {\sf W}_i$. 
Also by (ii), $[\rho] \in \RR_{\mf p}$, a contradiction. 
Thus, $\rho$ is false, which implies $[\rho] \notin {\sf W}_i$. 
By (iii), we get $[\rho] \vdash \RR$, and so we find $[\rho] \in \mc X$. 
Since $\RR_{\mf p} \cap \mc X = \emptyset$, we also have $[\rho] \notin \RR_{\mf p}$. 
Then, the partial \compu\ function $\Phi$ defined by $\Phi(i) : = [\sigma]$ witnesses the $\mc X$-creativity of $\RR$. 

Since $\RR$ is effectively ${\sf sent}$-inseparable, by Theorem~\ref{domi}, we have that $\RR$ is strongly effectively $(\mc X \land {\sf sent})$-inseparable. 
%
\end{ex}

It is known that there exists a consistent \ce~theory which is effectively inseparable but is not essentially hereditarily undecidable (\cite[Example 6]{Vis22}). 
Relating to this example, we propose the following problem. 

 \begin{prob}
     For any effectively inseparable consistent \ce~theory $U$, can we find a formula class $\mc X$ such that
     $U$ is effectively $\mc X$-inseparable but not strongly effectively $\mc X$-inseparable?
 \end{prob}

The relationships between effective notions we have considered so far are visualised in Figure~\ref{Fig2}. 

\begin{figure}[ht]
\centering
\begin{tikzpicture}
{\scriptsize
\node (EEC1) at (0,0.3) {effectively};
\node (EEC2) at (0,0) {essentially $\mc X$-creative};

\node (EEI1) at (4,0.3) {effectively};
\node (EEI2) at (4,0) {essentially $\mc X$-incomplete};

\node (EefEI1) at (4,-0.7) {effectively};
\node (EefEI2) at (4,-1) {ef-essentially $\mc X$-incomplete};

\node (EI) at (0,2) {effectively $\mc X$-inseparable};

\node (EUEI1) at (4,2.3) {effectively uniformly};
\node (EUEI2) at (4,2) {essentially $\mc X$-incomplete};

\node (EEHC1) at (-4,1.3) {effectively essentially};
\node (EEHC2) at (-4, 1) {hereditarily $\mc X$-creative};

\node (SEI1) at (-4,3.3) {strongly};
\node (SEI2) at (-4,3) {effectively $\mc X$-inseparable};

\draw [<->, double] (EEC2)--(EEI2);
\draw [<->, double] (EI)--(EUEI2);
\draw [<->, double] (EI)--(EEC1);
\draw [<->, double] (EUEI2)--(EEI1);
\draw [->, double] (EEHC2)--(EEC1);
\draw [<->, double] (SEI2)--(EEHC1);
\draw [->, double] (SEI2)--(EI);
\draw [->, double] (EEI2)--(EefEI1);
}
\end{tikzpicture}
\caption{Implications between effective notions}\label{Fig2}
\end{figure}
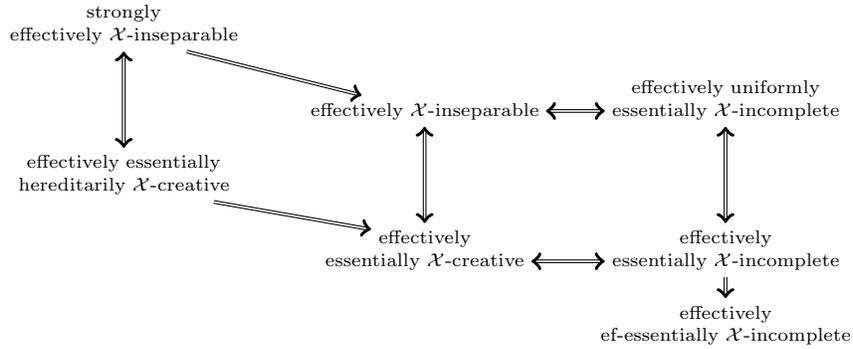

\section*{Acknowledgments}
We are grateful to Yong Cheng for his helpful comments.

\bibliographystyle{alpha}
\bibliography{reference2}

\appendix

\section{Pour-El's Original Proof}\label{original}

For the sake of completeness, we reproduce Pour-El's original argument here. Where our argument in Section~\ref{mainsmurf}, is a direct diagonal argument,
Pour-El's argument proceeds by embedding disjoint pairs of \ce~sets in the given theory. 

We say that
$\tupel{\mc X,\mc Y}\leq_{\mf s}\tupel{\mc Z,\mc W}$, or $\tupel{\mc X,\mc Y}$ is \emph{semi-reducible} to 
$\tupel{\mc Z,\mc W}$, iff there is a \compu\ $\Phi$ such that, for all $n$, if
$n\in \mc X$, then $\Phi(n) \in \mc Z$, and, if $n\in \mc Y$, then $\Phi(n) \in \mc W$.
(As far as we can trace it, this notion is due to Smullyan (cf.~\cite{Smullyan60,Smullyan93,Cheng23}.)

\begin{lem}\label{ei}
Suppose $U$ is effectively if-essentially incomplete as witnessed by a partial \compu\ function $\Phi$. 
Let $\mc X$ and $\mc Y$ be disjoint \ce~sets. 
Then $\tupel{\mc X, \mc Y} \leq_{\mf s} \tupel{U_{\mf p},U_{\mf r}}$.
An index of a witness for the semi-reducibility can be effectively obtained from an index of $\Phi$ and the indices of $\mathcal X$, and $\mathcal Y$.
\end{lem}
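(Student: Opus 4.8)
The plan is to adapt the diagonal construction of Theorem~\ref{pour-el}, but to drive the branching by membership of the \emph{input} $n$ in $\mc X$ or $\mc Y$ rather than by the location of the output. Concretely, I would use the Recursion Theorem with parameters to obtain a computable function $k$ such that, for every $n$,
\[
{\sf W}_{k(n)} = U \cup \verz{\neg\,\phi \mid \Phi(k(n)) \simeq \phi \text{ and } n \in \mc X} \cup \verz{\phi \mid \Phi(k(n)) \simeq \phi \text{ and } n \in \mc Y}.
\]
Since $\mc X$ and $\mc Y$ are disjoint, at most one of the two extra clauses is ever active, so ${\sf W}_{k(n)}$ is always a finite extension of $U$ in the same language: writing $\phi = \Phi(k(n))$, it equals $U$, or $U \cup \verz{\neg\,\phi}$, or $U \cup \verz{\phi}$. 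The intended witness for semi-reducibility is then $\Psi(n) := \Phi(k(n))$.

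First I would check that $\Phi(k(n))$ converges for every $n$. If it diverged, neither extra clause could fire, so ${\sf W}_{k(n)} = U$ would be a consistent finite extension of $U$; by the witnessing property of $\Phi$, $\Phi(k(n))$ would then have to converge, a contradiction. Hence $\Psi$ is total.

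Next comes the key branching argument. Suppose $n \in \mc X$ and put $\phi = \Phi(k(n))$; then ${\sf W}_{k(n)} = U \cup \verz{\neg\,\phi}$. If this theory were consistent it would be a consistent finite extension of $U$, so $\Phi(k(n)) = \phi$ would be independent of it; but $U \cup \verz{\neg\,\phi} \vdash \neg\,\phi$ refutes $\phi$, contradicting independence. Therefore $U \cup \verz{\neg\,\phi}$ is inconsistent, i.e.\ $U \vdash \phi$, so $\Psi(n) = \phi \in U_{\mf p}$. The case $n \in \mc Y$ is symmetric: there ${\sf W}_{k(n)} = U \cup \verz{\phi}$, and consistency would make $\phi$ simultaneously provable and independent in ${\sf W}_{k(n)}$, so the theory is inconsistent, $U \vdash \neg\,\phi$, and $\Psi(n) \in U_{\mf r}$. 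This establishes $\tupel{\mc X, \mc Y} \leq_{\mf s} \tupel{U_{\mf p}, U_{\mf r}}$. Everything above is uniform in an index of $\Phi$ and the indices of $\mc X$ and $\mc Y$: the defining equation for ${\sf W}_{k(n)}$ and the appeal to the Recursion Theorem with parameters are effective in these data, so an index for $\Psi$ can be computed from them.

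The main obstacle I anticipate is getting the polarity right while threading the self-reference cleanly. One must add $\neg\,\Phi(k(n))$ (and not $\Phi(k(n))$) in the $\mc X$-case, so that the forced inconsistency delivers $\phi \in U_{\mf p}$ rather than $\phi \in U_{\mf r}$, and dually for $\mc Y$; reversing the signs would prove the wrong inclusion. Once the fixed point is in place, the convergence step and the independence-versus-provability clash are otherwise routine.
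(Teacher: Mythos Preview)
Your proposal is correct and is essentially the paper's own argument: the same Recursion-Theorem fixed point ${\sf W}_{k(n)}$ branching on $n\in\mc X$ versus $n\in\mc Y$, the same witness $\Psi(n)=\Phi(k(n))$, and the same independence-versus-(anti)provability clash to force $\Psi(n)\in U_{\mf p}$ or $\Psi(n)\in U_{\mf r}$. The only cosmetic difference is that the paper first normalises $\Phi$ to converge on inconsistent inputs and then argues totality in one line, whereas you argue totality directly by noting that divergence would make ${\sf W}_{k(n)}=U$; both are fine (your version tacitly uses that $U$ is consistent, which is harmless here).
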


The argument is an adaptation of Smullyan's argument that every disjoint pair of \ce~sets is 1-reducible to any
given effectively inseparable pair. See \cite[Exercise 11.29]{roge:theo67} or \cite[Exercise 2.4.18]{soar:turi16}.
We work with semi-reducibility rather than one-one reducibility to keep the argument as simple as possible.

\begin{proof}
We may assume that $\Phi(i)$ converges if ${\sf W}_i$ is an inconsistent theory. 
By the Recursion Theorem with a parameter, there exists a total \compu\ function $\Psi(x)$ such that, setting $\Phi^\ast(x) := \Phi(\Psi(x))$, 

\[{\sf W}_{\Psi(x)} = \begin{cases} U \cup \{\Phi^\ast(x)\} & \text{if}\ \Phi^\ast(x)\ \text{converges and}\ x \in \mc Y,
\\
	U \cup \{ \neg\, \Phi^\ast(x)\} & \text{if}\ \Phi^\ast(x)\ \text{converges and}\ x \in \mc X, \\
 U & \text{otherwise.}\end{cases}\]

\noindent
 We show that $\Phi^\ast$ witnesses the semi-reducibility of $\tupel{\mc X,\mc Y}$ to $\tupel{U_{\mf p},U_{\mf r}}$.


For any $n$, ${\sf W}_{\Psi(n)}$ is either a finite consistent extension of $U$ or an inconsistent theory, and so $\Phi^\ast(n) = \Phi(\Psi(n))$ converges. 
Thus, $\Phi^\ast$ is a total \compu\ function. 

Suppose $n\in \mc X$. Then, 
${\sf W}_{\Psi(n)}= U \cup \{\,\neg\, \Phi^\ast(n)\}$. 
Since, ${\sf W}_{\Psi(n)}\vdash \neg\,\Phi(\Psi(n))$,
we find, by our assumption on $\Phi$, 
that ${\sf W}_{\Psi(n)}$ is inconsistent. It follows that $U \vdash \Phi^*(n)$. 


The other case is similar.
%
\end{proof}

\begin{lem}[Smullyan {\cite[Proposition 1]{Smullyan60}}]\label{uppresei}
Suppose $\tupel{\mc X,\mc Y}$ is effectively inseparable and 
$\tupel{\mc X,\mathcal Y}\leq_{\mf s}\tupel{\mc Z,\mc W}$.
Then, $\tupel{\mc Z,\mc W}$ is effectively inseparable. 
\end{lem}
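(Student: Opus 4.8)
The plan is to pull back the separating sets along the semi-reduction, apply the effective inseparability of $\tupel{\mc X,\mc Y}$, and then push the resulting witness forward. Let $g$ be a total \compu\ function witnessing $\tupel{\mc X,\mc Y}\leq_{\mf s}\tupel{\mc Z,\mc W}$; thus $n\in\mc X$ implies $g(n)\in\mc Z$, and $n\in\mc Y$ implies $g(n)\in\mc W$. Let $\Phi$ be a partial \compu\ function witnessing the effective inseparability of $\tupel{\mc X,\mc Y}$. I seek a partial \compu\ $\Psi$ such that whenever ${\sf W}_a$ and ${\sf W}_b$ weakly biseparate $\mc Z$ and $\mc W$, we have that $\Psi(a,b)$ converges and $\Psi(a,b)\notin {\sf W}_a\cup {\sf W}_b$.

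First I would, given indices $a,b$, effectively compute indices $a'$ and $b'$ of the \ce~sets ${\sf W}_{a'}=\verz{n\mid g(n)\in {\sf W}_a}$ and ${\sf W}_{b'}=\verz{n\mid g(n)\in {\sf W}_b}$; this is possible since $g$ is total \compu\ and membership in ${\sf W}_a,{\sf W}_b$ is \ce. The key step is to check that if ${\sf W}_a,{\sf W}_b$ weakly biseparate $\mc Z,\mc W$, then ${\sf W}_{a'},{\sf W}_{b'}$ weakly biseparate $\mc X,\mc Y$. For $\mc X\subseteq {\sf W}_{a'}$: if $n\in\mc X$ then $g(n)\in\mc Z\subseteq {\sf W}_a$, so $n\in {\sf W}_{a'}$; similarly $\mc Y\subseteq {\sf W}_{b'}$, using $\mc W\subseteq {\sf W}_b$. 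For ${\sf W}_{a'}\cap\mc Y=\emptyset$: if $n\in {\sf W}_{a'}\cap\mc Y$ then $g(n)\in {\sf W}_a$ and $g(n)\in\mc W$, contradicting ${\sf W}_a\cap\mc W=\emptyset$; and symmetrically ${\sf W}_{b'}\cap\mc X=\emptyset$, using ${\sf W}_b\cap\mc Z=\emptyset$.

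Granting this, the effective inseparability of $\tupel{\mc X,\mc Y}$ gives that $\Phi(a',b')$ converges and $\Phi(a',b')\notin {\sf W}_{a'}\cup {\sf W}_{b'}$. By the very definitions of $a'$ and $b'$, the statement $\Phi(a',b')\notin {\sf W}_{a'}$ means $g(\Phi(a',b'))\notin {\sf W}_a$, and $\Phi(a',b')\notin {\sf W}_{b'}$ means $g(\Phi(a',b'))\notin {\sf W}_b$. Hence I would set $\Psi(a,b):=g(\Phi(a',b'))$, which converges (as $g$ is total and $\Phi(a',b')$ converges) and lies outside ${\sf W}_a\cup {\sf W}_b$, exactly as required.

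I do not expect a serious obstacle here: the argument is a routine pullback/pushforward along the semi-reduction, and the only thing to get right is the bookkeeping of the four weak-biseparation conditions and their directions, which rely on $g$ sending $\mc X$ into $\mc Z$ and $\mc Y$ into $\mc W$ together with the containments and disjointnesses coming from the weak biseparation of $\mc Z,\mc W$. If desired, the same computation shows that an index for $\Psi$ can be obtained effectively from indices of $g$ and of $\Phi$, which is the form needed for the effective chaining in the appendix.
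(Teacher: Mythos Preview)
Your proof is correct and follows essentially the same pullback/pushforward argument as the paper: form the preimages of the separating pair under the semi-reduction, apply effective inseparability there, and push the witness forward. Your version is in fact slightly more careful than the paper's, which simply asserts that the pulled-back pair ``separates'' $\tupel{\mc X,\mc Y}$ without spelling out the four weak-biseparation conditions.
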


\begin{proof}
Suppose $\Theta$ witnesses the effective inseparability of 
$\mc X$ and $\mc Y$. Suppose further that $ \Psi$ witnesses the
semi-reducibility of $\tupel{\mc X,\mc Y}$ to $\tupel{\mc Z,\mc W}$.

Suppose the pair $\tupel{\mc Z',\mc W'}$ 
separates $\tupel{\mc Z,\mc W}$.
Say, the indices of $\mc Z'$ and $\mc W'$ are $i$ and $j$.
Let $\mc X' := \verz{n\mid \Psi(n) \in \mc Z'}$ and 
$\mc Y' := \verz{m\mid  \Psi(m) \in \mc W'}$.
Clearly, $\tupel{\mc X',\mc Y'}$ separates 
$\tupel{\mc X,\mc Y}$.

We can effectively find indices $k$ and $\ell$ for $\mc X'$ and 
$\mc Y'$ from $i$ and $j$.
Let $s:=  \Psi(\Theta(k,\ell))$. Suppose $s\in \mc Z'$. 
In that case   $\Theta(k,\ell)$ is in $\mc X'$. \emph{Quod non}.
Similarly, $s \not \in \mc W'$.
\end{proof}

\begin{thm}
Suppose $U$ is effectively if-essentially incomplete. 
Then, $U$ is effectively inseparable.
\end{thm}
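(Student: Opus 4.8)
The plan is to combine the two lemmas that have just been proved, transporting effective inseparability from a fixed, standard pair of \ce~sets onto the pair $\tupel{U_{\mf p},U_{\mf r}}$. Recall that, by definition, ``$U$ is effectively inseparable'' unwinds to ``$(U,U)$ is effectively inseparable'', which in turn means that $U_{\mf p}$ and $U_{\mf r}$ are an effectively inseparable pair. Since $U$ is consistent, $U_{\mf p}$ and $U_{\mf r}$ are disjoint, so this pair is a legitimate candidate for the notion. Thus the whole task reduces to exhibiting effective inseparability of $\tupel{U_{\mf p},U_{\mf r}}$.

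First I would invoke the classical fact that there exists a disjoint pair $\tupel{\mc X,\mc Y}$ of \ce~sets that is effectively inseparable; the canonical construction from a universal partial \compu\ function (as in the standard recursion-theory texts already cited in the excerpt) supplies one. Next, since $U$ is effectively if-essentially incomplete by hypothesis, I would apply Lemma~\ref{ei} with this $\mc X$ and $\mc Y$ to obtain $\tupel{\mc X,\mc Y}\leq_{\mf s}\tupel{U_{\mf p},U_{\mf r}}$. Finally, feeding the effective inseparability of $\tupel{\mc X,\mc Y}$ together with this semi-reduction into Lemma~\ref{uppresei} (Smullyan), I would conclude that $\tupel{U_{\mf p},U_{\mf r}}$ is itself effectively inseparable, which is exactly what ``$U$ is effectively inseparable'' asserts.

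The step-by-step chain is therefore short: (1) fix an effectively inseparable disjoint \ce~pair; (2) semi-reduce it into $\tupel{U_{\mf p},U_{\mf r}}$ via Lemma~\ref{ei}; (3) upgrade to effective inseparability of the target pair via Lemma~\ref{uppresei}; (4) read off the definition. All the genuine work has already been absorbed into the two lemmas, so no further diagonalisation or fixed-point argument is needed at this stage.

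The main obstacle is not in this assembly but in the pre-established Lemma~\ref{ei}, whose Recursion-Theorem construction does the real diagonal labour of coding a given disjoint pair into the theory by adjoining either $\Phi^\ast(x)$ or $\neg\,\Phi^\ast(x)$ to $U$. At the level of this final theorem, the only points requiring a moment's care are checking that the definition of effective inseparability used in Lemma~\ref{uppresei} (phrased via separating pairs) matches the one in force for theories (phrased via weakly bi-separating sets), and confirming the disjointness of $U_{\mf p}$ and $U_{\mf r}$ from the consistency of $U$; both are routine.
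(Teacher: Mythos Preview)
Your proposal is correct and matches the paper's proof essentially verbatim: pick an effectively inseparable disjoint \ce~pair, apply Lemma~\ref{ei} to semi-reduce it to $\tupel{U_{\mf p},U_{\mf r}}$, then invoke Lemma~\ref{uppresei}. Your additional remarks about disjointness and the (weakly) bi-separating formulations are fine side observations but not needed for the argument as stated.
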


\begin{proof}
Suppose $U$ is essentially if-effectively incomplete. Let $\tupel{\mc X,\mc Y}$ 
be any effectively inseparable pair of sets. We find, by Lemma~\ref{ei}, that
$\tupel{\mathcal X,\mathcal Y}\leq_{\mf s}\tupel{U_{\mf p},U_{\mf r}}$. So, by Lemma~\ref{uppresei}, also $\tupel{U_{\mf p},U_{\mf r}}$
will be effectively inseparable.
\end{proof}

Inspecting our argument, we see that, if $\Phi$ is a witness of the effective essential 
incompleteness, then
the witnesses of essential inseparability are in the range of $\Phi$.

\begin{rem}
    The function $\Phi^\ast$ of the proof of Lemma~\ref{ei} is actually a witness of many-one reducibility.
    We can see the other direction as follows. 
    
    Let $\Phi^\ast$ be as in the proof of Lemma~\ref{ei}.
    If $U \vdash \Phi^\ast(n)$, then, \emph{a fortiori}, 
${\sf W}_{\Psi(n)}\vdash \Phi(\Psi(n))$ and, thus, that ${\sf W}_{\Psi(n)}$ is inconsistent. 
It follows that $n\in \mc X$ or $n\in \mc Y$, since otherwise
${\sf W}_{\Psi(n)}$ would be the supposedly consistent theory $U$. 
But if $n\in \mc Y$, we find that $U \vdash \neg\, \Phi^\ast(n)$, \emph{quod non}, since  we assumed that 
$U\vdash \Phi^\ast(n)$ and that $U$ is consistent. So $n\in \mc X $. The other case is similar.

By padding we can assure that $\Phi^\ast$ is injective, thus witnessing one-one reducibility. 
However, this construction will not preserve the fact that the range
of $\Phi^\ast$ is contained in the range of $\Phi$. (It still will be so modulo provable equivalence in predicate logic.)
\end{rem}

\section{Effective Local Interpretability?}\label{locosmurf}

All formulations of \emph{what effective local interpretability is} that we could think of either collapse to ordinary local interpretability or to
interpretability. Let $\mathfrak{id}_V$ denote the conjunction of the equality axioms of a theory $V$. 
We define:
\begin{itemize}
\item
$U \rhd_{\sf A\text{-} loc}V$ iff there is a partial \compu\ $\Phi$, such that, whenever $V\vdash \phi$, we have
$\Phi(\phi)$ converges, say to $\tau$, and $U \vdash (\mathfrak{id}_V \wedge \phi)^\tau$.
\item
$U \rhd_{\sf B\text{-} loc}V$ iff there is a partial \compu\ $\Psi$, such that, whenever ${\sf W}_i = \verz \psi$ and $V\vdash \psi$, we have
$\Psi(i)$ converges, say to $\nu$, and $U \vdash (\mathfrak{id}_V \wedge \psi)^\nu$.
\item
$U \rhd_{\sf C\text{-} loc}V$ iff there is a partial \compu\ $\Theta$, such that, whenever ${\sf W}_j = \verz {\theta_0,\dots,\theta_{k-1}}$ and $V\vdash \theta_s$,
for all $s<k$, we have
$\Theta(j)$ converges, say to $\rho$, and $U \vdash \mathfrak{id}^\rho_V$ and  $U \vdash \theta_s^\rho$, for $s<k$.
\end{itemize}

We show that A-local and B-local coincide with local and that C-local coincides with global.

It is immediate that, if $U \rhd_{\sf B\text{-} loc}V$, then $U \rhd_{\sf A\text{-} loc}V$. It is equally immediate that
if $U \rhd_{\sf A\text{-} loc}V$, then $U \rhd_{\sf loc}V$. 
Suppose $U \rhd_{\sf loc}V$.
We show $U \rhd_{\sf B\text{-} loc}V$. Consider any index $i$. We enumerate ${\sf W}_i$. As soon as we find any $\psi$ in ${\sf W}_i$,
we run though the $U$-proofs to find a conclusion of the form $(\mathfrak{id}_V \wedge \psi)^\nu$. If we find such, we take
$\Psi(i):= \nu$. It is easy to see that $\Psi$ witnesses $U \rhd_{\sf B\text{-} loc}V$.

Clearly $U \rhd V$ implies $U \rhd_{\sf C\text{-} loc}V$. We prove the converse direction.
Suppose $\upsilon_0,\upsilon_1, \dots$ enumerates the theorems of $V$.
Let $\Theta$ be given as a witness of
$U \rhd_{\sf C\text{-} loc}V$. Using the Recursion Theorem we find $j^\ast$ such that
${\sf W}_{j^\ast}$ is given as follows. We first compute $\Theta(j^\ast)$. As long as it does not converge, we put nothing in
${\sf W}_{j^\ast}$. As soon as $\Theta(j^\ast)$ converges, say to $\rho^\ast$, we add $\upsilon_0$ to ${\sf W}_{i^\ast}$ and
search for a $U$-proof of $\upsilon_0^{\rho^\ast}$. As long as we don't find such, we add nothing more to ${\sf W}_{j^\ast}$.
As soon as we do find such a proof, we add $\upsilon_1$ to ${\sf W}_{j^\ast}$. Etcetera.
It is now easy to see that $\Theta(j^\ast)$ converges, ${\sf W}_{j^\ast} = V_{\mf p}$, and $\rho^\ast$ witnesses $U \rhd V$.


\section{Effective Essential Tolerance}\label{effesstol}

In~\cite{Vis22}, Albert Visser studies \emph{essential tolerance}, a reduction relation that backwards preserves
essential hereditary undecidability. The results of that paper have precise effective counterparts. The reduction
relation \emph{effective essential tolerance} backwards preserves effective essential hereditary creativity.

\begin{defn}
   \emph{$U$ effectively essentially tolerates $V$} or $U \jump_{\sf eff} V$ iff there are partial \compu\ functions
    $\Phi_0$ and $\Phi_1$, such that, whenever ${\sf W}_i$ is a consistent extension of $U$,
    $\Phi_0(i)$ and $\Phi_1(i)$ converge and ${\sf W}_{\Phi_0(i)}$ is a consistent extension of ${\sf W}_i$ and
    $\Phi_1(i)$ codes a translation $\tau$ that witnesses ${\sf W}_{\Phi_0(i)}\rhd V$.
\end{defn}

We can always assume that the witnesses of effective essential tolerance are total. Let $U$ be given with some index $j$.
We start with the partial witnesses $\Phi_k$ for $k = 0, 1$. Say, the total ones will be $\Psi_k$.
Consider any index $i$. We can, from $i$, effectively find an index $i^\ast$ of
$U \cup {\sf W}_i$. We now compute in parallel $\Phi_0(i^\ast)$, $\Phi_1(i^\ast)$ and we search, again in parallel,
for a contradiction in $U \cup {\sf W}_i$. If we find a value of a $\Phi_k(i^\ast)$ first, we set $\Psi_k(i)$ to that
value. If we find an inconsistency in $U \cup {\sf W}_i$, we set the $\Psi_k(i)$ that do not have a value yet to
some random value.

\newcommand{\sva}{\phi}
\newcommand{\svb}{\psi}
\newcommand{\svc}{\chi}
\newcommand{\idtb}[2]{\mathfrak{id}_{#1}^{#2}}

\medskip
We first verify some basic properties of $\jump_{\sf eff}$.

\begin{thm}\label{jumpysmurf}
    The relation $\jump_{\sf eff}$ extends $\rhd$.
\end{thm}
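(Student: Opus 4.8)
The plan is to observe that interpretability is trivially preserved when one passes to an extension of the interpreting theory, so that the two witnessing functions required by $\jump_{\sf eff}$ can be taken to be completely trivial and no genuine search or diagonalisation is needed. Suppose $U \rhd V$. By definition this says $V \lhd U$, so I would fix an interpretation $K \colon V \lhd U$ based on a translation $\tau$ of the $V$-language into the $U$-language; thus $U \vdash \phi^\tau$ whenever $V \vdash \phi$. (In the presence of parameters with parameter-domain $\alpha$, this reads $U \vdash \exists \vv x\, \alpha(\vv x)$ together with: $U \vdash \forall \vv x\,(\alpha(\vv x) \to \phi^{\tau,\vv x})$ whenever $V \vdash \phi$.) The code $\gn{\tau}$ of this fixed translation is a single number, available once and for all.

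Next I would exhibit the witnesses. Take $\Phi_0$ to be the identity function, so that ${\sf W}_{\Phi_0(i)} = {\sf W}_i$, and take $\Phi_1$ to be the constant function with value $\gn{\tau}$. Both are total \compu\ functions, so in particular they converge on every input. It then remains to verify the defining clauses of $\jump_{\sf eff}$ for an arbitrary $i$ with ${\sf W}_i$ a consistent extension of $U$. The theory ${\sf W}_{\Phi_0(i)} = {\sf W}_i$ is a consistent extension of ${\sf W}_i$, since every theory extends itself and ${\sf W}_i$ is consistent by hypothesis. For the interpretation clause, note that ${\sf W}_i \vdash \psi$ for every $U$-theorem $\psi$, because ${\sf W}_i$ extends $U$; in particular, whenever $V \vdash \phi$ we have $U \vdash \phi^\tau$ and hence ${\sf W}_i \vdash \phi^\tau$, and the parameter conditions transfer verbatim. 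Thus the very same $\tau$ witnesses $V \lhd {\sf W}_i$, i.e.\ ${\sf W}_{\Phi_0(i)} \rhd V$, and $\Phi_1(i) = \gn{\tau}$ codes it. This gives $U \jump_{\sf eff} V$.

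I do not expect any genuine obstacle: the statement is a soundness check confirming that the effective-tolerance jump lies above ordinary interpretability, and the content reduces to the elementary fact that a translation interpreting $V$ in $U$ still interprets $V$ in any extension of $U$. The only point meriting a moment's care is that $\tau$ is genuinely a translation of the $V$-language into the ${\sf W}_i$-language; this is automatic, since ${\sf W}_i$ is an extension of $U$ and its signature therefore contains the $U$-signature, which is exactly the target signature of $\tau$.
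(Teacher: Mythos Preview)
Your proof is correct and takes essentially the same approach as the paper: set $\Phi_0$ to the identity and $\Phi_1$ to the constant function returning the code of the fixed interpreting translation. The paper's proof is a one-liner to this effect; your additional verification of the clauses and the remark about signatures are sound but not strictly necessary.
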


\begin{proof}
    Suppose $U\rhd V$ as witnessed by $\tau_0$. We take $\Phi_0(i) := i$ and $\Phi_1(i) := \tau_0$.
\end{proof}

\begin{thm}
The relation $\jump_{\sf eff}$ is reflexive and transitive. 
\end{thm}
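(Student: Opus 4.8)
The plan is to treat the two properties separately: reflexivity is immediate, while transitivity requires a pullback construction to secure consistency. For reflexivity, note that by Theorem~\ref{jumpysmurf} the relation $\jump_{\sf eff}$ extends $\rhd$, and $\rhd$ is reflexive via the identity translation; hence $U \rhd U$ gives $U \jump_{\sf eff} U$.

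For transitivity, suppose $U \jump_{\sf eff} V$ via partial \compu\ $(\Phi_0,\Phi_1)$ and $V \jump_{\sf eff} W$ via $(\Xi_0,\Xi_1)$. Given an index $i$ with ${\sf W}_i$ a consistent extension of $U$, I would first compute $m := \Phi_0(i)$ and $\tau := \Phi_1(i)$; thus ${\sf W}_m$ is a consistent extension of ${\sf W}_i$ and $\tau$ witnesses ${\sf W}_m \rhd V$. The naive idea is to feed $V$ into $(\Xi_0,\Xi_1)$ and then transport the resulting $W$-interpreting extension back over $\tau$; but transporting the axioms of an extension of $V$ into the ${\sf W}_m$-language need not preserve consistency, since $\rhd$ preserves theorems only in the forward direction. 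This is the main obstacle.

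The fix is to replace $V$ by its \emph{pullback} $V' := \verz{\phi \mid {\sf W}_m \vdash \phi^\tau}$. I would check that $V'$ is a consistent \ce\ extension of $V$ in the $V$-language (consistency because ${\sf W}_m$ is consistent and translation commutes with negation), that $\tau$ witnesses ${\sf W}_m \rhd V'$, and --- crucially --- that ${\sf W}_m \vdash \psi^\tau$ iff $\psi \in V'$. An index $n$ of $V'$ is \compu\ from $m$ and $\tau$. Applying $V \jump_{\sf eff} W$ to $n$ yields $p := \Xi_0(n)$ and $\sigma := \Xi_1(n)$, so ${\sf W}_p$ is a consistent extension of $V'$ and $\sigma$ witnesses ${\sf W}_p \rhd W$. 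I then set $T := {\sf W}_m \cup \verz{\gamma^\tau \mid \gamma \in {\sf W}_p}$.

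The heart of the argument is that $T$ is consistent: were it not, then ${\sf W}_m \vdash (\neg \bigwedge \Gamma_0)^\tau$ for some finite $\Gamma_0 \subseteq {\sf W}_p$, so by the pullback biconditional $V' \vdash \neg \bigwedge \Gamma_0$; but $\Gamma_0 \subseteq {\sf W}_p$ and ${\sf W}_p \supseteq V'$ is consistent, a contradiction. (This is precisely why $V'$ must be the full pullback and not $V$ itself.) A routine deduction-theorem computation then shows that $\tau$ witnesses $T \rhd {\sf W}_p$, and composing $\tau$ with $\sigma$ gives a translation $\rho$ witnessing $T \rhd W$. Since $T$ is a consistent extension of ${\sf W}_m \supseteq {\sf W}_i$ and every step is effective, setting $\Phi_0'(i)$ to an index of $T$ and $\Phi_1'(i) := \rho$ witnesses $U \jump_{\sf eff} W$. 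The only remaining items are the standard interpretation bookkeeping under composition (domain-nonemptiness and identity conditions), which I would invoke rather than recompute.
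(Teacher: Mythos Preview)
Your proof is correct and takes essentially the same approach as the paper: your pullback $V'$ is exactly the paper's $Y := \{\phi \mid {\sf W}_j \vdash \phi^\tau\}$, your $T$ is the paper's $Z := {\sf W}_j \cup \{\phi^\tau \mid {\sf W}_k \vdash \phi\}$ (up to the inessential choice of axioms versus theorems of ${\sf W}_p$), and the witnessing composite translation is the same. You supply more detail on why $T$ is consistent, whereas the paper simply asserts that this follows from the consistency of ${\sf W}_k$.
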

\begin{proof}
Reflexivity is trivial. (It also follows from Theorem~\ref{jumpysmurf}.) We prove transitivity.
    Suppose $U \jump_{\sf eff} V \jump_{\sf eff} W$ and let $\Phi_0, \Phi_1$ witness $U \jump_{\sf eff} V$ and $\Psi_0, \Psi_1$ witness $V \jump_{\sf eff} W$. 
    Let ${\sf W}_i$ be any consistent extension of $U$. 
    Let $j : = \Phi_0(i)$ and $\tau: = \Phi_1(i)$, then ${\sf W}_j$ is a consistent extension of ${\sf W}_i$ and $\tau$ witnesses ${\sf W}_j \rhd V$. 
    Let $Y : = \{\phi \mid {\sf W}_j \vdash \phi^\tau\}$. 
    We can effectively find an index $p$ of $Y$ from $j$ and $\tau$. 
    Let $k : = \Psi_0(p)$ and $\mu : = \Psi_1(p)$. Then, ${\sf W}_k$ is a consistent extension of $Y$ and $\mu$ witnesses ${\sf W}_k \rhd W$. 
    Let $Z : = {\sf W}_j \cup \{\phi^\tau \mid {\sf W}_k \vdash \phi\}$. 
    It follows from the consistency of ${\sf W}_k$ that $Z$ is a consistent extension of ${\sf W}_i$. 
    Since $\tau$ witnesses $Z \rhd {\sf W}_k$, the composition $\tau \circ \mu$ witnesses $Z \rhd W$. 
    We can effectively find an index $q$ of $Z$ from $j$, $k$ and $\tau$. 
    We define $\Theta_0(i) : = q$ and $\Theta_1(i) : = \tau \circ \mu$. 
    Thus, $\Theta_0, \Theta_1$ witness $U \jump_{\sf eff} W$. 
\end{proof}

In~\cite{Vis22}, it is shown that the non-effective version $\jumpb$ is strictly between model-interpretability and local interpretability.
Since, of course, $V\jumpb_{\sf eff}U$ implies $V\jumpb U$ and $V\jumpb U$ implies $V\lhd_{\sf loc}U$, we
find that $V\jumpb_{\sf eff}U$ implies $V\lhd_{\sf loc}U$. 
As we have seen, in Appendix~\ref{locosmurf}, we can view local interpretability as its own effective version. So this result can be viewed as an
implication between effective notions.
Regrettably, we are not aware of a good effective version of model interpretability, so the implication from $V \lhd_{\sf mod} U$
to $V\jumpb U$ seems to have no good effective analogue.

We proceed to show the retro-transmission of salient properties.

\begin{thm}\label{reversosmurf}
Let $U$ be \ce~and consistent.
\begin{enumerate}[i.]
\item
Suppose $V$ is an effectively essentially incomplete \ce~theory and $U\jump_{\sf eff} V$.
 Then, $U$ is effectively essentially incomplete.
\item
Suppose $V$ is an effectively  essentially hereditarily creative \ce~theory and $U\jump_{\sf eff} V$.
 Then, $U$ is effectively essentially hereditarily creative.
 \end{enumerate}
\end{thm}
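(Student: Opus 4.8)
The plan is to treat both parts by transporting the relevant property of $V$ back to $U$ along the interpretation supplied by effective essential tolerance, and then to exploit the recursion-theoretic machinery of Section~\ref{heredity}. Throughout I may assume, as shown right after the definition of $\jump_{\sf eff}$, that the witnesses $\Phi_0,\Phi_1$ of $U\jump_{\sf eff}V$ are total. The basic gadget in both parts is the \emph{pullback} of an interpreting theory: if $\tau$ witnesses $T\rhd V$, then $V^\ast:=\{\phi\mid T\vdash\phi^\tau\}$ is a deductively closed, consistent, \ce~extension of $V$ (consistency because $T$ is consistent and the interpretation domain is provably non-empty), whose index is computable from an index of $T$ and (a code of) $\tau$.

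For (i), suppose ${\sf W}_i$ is a consistent extension of $U$. I would compute $\ell:=\Phi_0(i)$ and $\tau:=\Phi_1(i)$, so that $T:={\sf W}_\ell$ is a consistent extension of ${\sf W}_i$ with $T\rhd V$ via $\tau$, and form $V^\ast$. Feeding an index of $V^\ast$ to a witness $\Psi$ for the effective essential incompleteness of $V$ yields a sentence $\theta$ independent of $V^\ast$, i.e.\ $T\nvdash\theta^\tau$ and $T\nvdash(\neg\,\theta)^\tau$; since $(\neg\,\theta)^\tau$ is $T$-equivalent to $\neg\,(\theta^\tau)$, the sentence $\theta^\tau$ is independent of $T$, and hence \emph{a fortiori} of ${\sf W}_i\subseteq T$. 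Setting $\Phi(i):=\theta^\tau$ gives the desired witness. The point is that here the direction is favourable: independence from the \emph{larger} theory $T$ is stronger than what is needed for ${\sf W}_i$.

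For (ii), I would pass to the characterisation of effective essential hereditary $\mc X$-creativity given by Lemma~\ref{eehc1}(ii) (equivalently, by Theorem~\ref{eehc2}, to strong effective inseparability). Given ${\sf W}_i$ consistent with $U$ and ${\sf W}_j$ with ${\sf W}_{i\mf p}\cap{\sf W}_j=\emptyset$, I would apply tolerance to $U\cup{\sf W}_i$ to get $T\supseteq U\cup{\sf W}_i$ and $\tau$ with $T\rhd V$, and aim to output a witness of the form $\theta^\tau$, where $\theta$ is delivered by a witness for the effective essential hereditary creativity of $V$. Since a naive pullback of the pair $({\sf W}_{i\mf p},{\sf W}_j)$ cannot be handed directly to $V$'s creativity, the two sets that $\theta$ must avoid would be committed \emph{reactively}, using the Double Recursion Theorem with parameters and witness comparison exactly as in the proofs of Theorems~\ref{eehc2} and~\ref{eehc3}: one would place $\theta$ into the auxiliary $V$-theory when its image $\theta^\tau$ is seen to enter ${\sf W}_{i\mf p}$, and into the auxiliary disjoint set when $\theta^\tau$ is seen to enter ${\sf W}_j$, and then read off from the contrapositive of the creativity guarantee that neither branch can occur, so that $\theta^\tau\notin{\sf W}_{i\mf p}\cup{\sf W}_j$; Lemma~\ref{eehc1} finally recasts this two-index witness in the three-index form.

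The hard part will be the verification in (ii), and it is genuinely asymmetric with (i). Tolerance only guarantees $\tau$ in the \emph{larger} theory $T$, so the domain and identity conditions of $\tau$ live only in $T$, not in ${\sf W}_i$. Consequently the branch in which $\theta^\tau\in{\sf W}_{i\mf p}\subseteq T_{\mf p}$ is easy: then $\theta\in V^\ast_{\mf p}$, so the singleton $\{\theta\}$ is a subtheory of $V^\ast$, the creativity guarantee applies, and it is directly contradicted by $\theta\in\{\theta\}_{\mf p}$. The branch in which $\theta^\tau\in{\sf W}_j$ is the delicate one: ${\sf W}_j$ is only known to be disjoint from ${\sf W}_{i\mf p}$ and may well meet $T_{\mf p}$, so the symmetric certification of $\{\theta\}$ as a subtheory of $V^\ast$ is unavailable and the consistency argument does not close. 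The crux is therefore to arrange the reactive construction so that the witness is forced \emph{outside} $T_{\mf p}$ — equivalently, outside the pullback $V^\ast$ — thereby escaping the part of ${\sf W}_j$ lying inside $T_{\mf p}$, while the remaining part of ${\sf W}_j$ is absorbed by the reactive disjoint set. It is precisely at this point that the full strength of $V$'s \emph{hereditary} creativity, which avoids arbitrary disjoint \ce~sets rather than merely refutable ones, is indispensable, and this is what makes the effective analogue of the $\jump$-preservation theorem of~\cite{Vis22} go through.
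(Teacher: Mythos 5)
Part (i) of your proposal is correct and coincides with the paper's argument: pull the consistent extension ${\sf W}_i$ back along $\tau$ to a consistent \ce\ extension of $V$, apply $V$'s witness to an index of that pullback, and push the resulting independent sentence forward as $\theta^\tau$; independence from the larger tolerance extension passes down to ${\sf W}_i$.

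Part (ii) has a genuine gap. You correctly diagnose why witnesses of the shape $\theta^\tau$ break down: when $\theta^\tau$ falls into the disjoint set ${\sf W}_k$, you cannot certify the pair you would feed to $V$'s creativity. Indeed, $\theta$ could even be logically valid while $\theta^\tau \in {\sf W}_k$, since validity of $\theta$ only yields validity of $\mathfrak{id}_V^\tau \to \theta^\tau$, not of $\theta^\tau$ itself; so in that branch the required disjointness hypothesis of Lemma~\ref{eehc1}(ii) cannot be verified and no contradiction is reached. But your proposal ends by restating this crux (``arrange the reactive construction so that the witness is forced outside $T_{\mf p}$'') rather than resolving it: no construction is given, and it is doubtful that any Double-Recursion/witness-comparison construction that keeps the witness shape $\theta^\tau$ can close that branch.

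The paper's solution is different and eliminates the recursion theorem from part (ii) entirely: it changes the shape of the witness to the \emph{conditional} $\mathfrak{id}_V^\tau \to \theta^\tau$ and takes the pullback relative to ${\sf W}_i + \mathfrak{id}_V^\tau$ rather than relative to the full tolerance extension. Concretely, apply tolerance to $U \cup {\sf W}_i$ to get a consistent ${\sf W}_j \supseteq U \cup {\sf W}_i$ with ${\sf W}_j \rhd V$ via $\tau$, and set $\mc Z := \verz{\psi \mid {\sf W}_i + \mathfrak{id}_V^\tau \vdash \psi^\tau}$ and $\mc X := \verz{\phi \mid (\mathfrak{id}_V^\tau \to \phi^\tau) \in {\sf W}_k}$, with indices found effectively from $i$, $k$, $\tau$. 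Then $\mc Z$ is consistent with $V$, because the translations of both $V$'s axioms and $\mc Z$'s elements are provable in the consistent theory ${\sf W}_j$; and $\mc Z_{\mf p} \cap \mc X = \emptyset$ follows at once from ${\sf W}_{i\mf p} \cap {\sf W}_k = \emptyset$, because $\mc Z \vdash \phi$ iff $(\mathfrak{id}_V^\tau \to \phi^\tau) \in {\sf W}_{i\mf p}$. Now Lemma~\ref{eehc1}(ii) for $V$ applies \emph{directly} to the pair $(\mc Z, \mc X)$, yielding a $\chi \notin \mc Z_{\mf p} \cup \mc X$, whence $(\mathfrak{id}_V^\tau \to \chi^\tau) \notin {\sf W}_{i\mf p} \cup {\sf W}_k$, which is exactly what the two-index form of effective essential hereditary creativity of $U$ demands. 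The idea you were missing is that the single map $\phi \mapsto (\mathfrak{id}_V^\tau \to \phi^\tau)$ pulls back \emph{both} members of the pair simultaneously, so the case analysis — and with it the whole reactive construction — disappears. Your closing observation is nonetheless on target and is precisely where hereditariness enters: $\mc Z$ is merely consistent with $V$ rather than an extension of it, and $\mc X$ is an arbitrary \ce\ set disjoint from $\mc Z_{\mf p}$, so plain effective inseparability of $V$ would not suffice.
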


\begin{proof}
Ad (i).
Let $\Phi$  witness that $V$ is an effectively essentially incomplete and let $\Psi_0$, $\Psi_1$ witness that  $U\jump_{\sf eff} V$.
Let ${\sf W}_i$ be a consistent extension of $U$. Let $j := \Psi_0 i$ and $\tau := \Psi_1 i$. Then ${\sf  W}_j$ is a consistent extension of
${\sf W}_i$ and $\tau$ witnesses that ${\sf W}_j \rhd V$.  Let $\mc Z := \verz{\psi \mid {\sf W}_j \vdash \psi^\tau}$. 
We can clearly effectively find an index $k$ of $\mc Z$ from $j$ and $\tau$. Let $\chi := \Phi(k)$.
Then $\xi :=\chi^\tau$ is independent of ${\sf W}_j$ and, \emph{a fortiori}, of ${\sf W}_i$.
Inspecting the argument, we see that $\xi$ can be effectively found from $i$.

\medskip
Ad (ii).
Suppose $V$  is an effectively essentially hereditarily creative \ce~theory and $U\jump_{\sf eff} V$. Let $\Phi$ witness
the effective essential hereditary creativity of $V$ and let $\Psi_0$, $\Psi_1$ witness $U\jump_{\sf eff} V$.
We are looking for a witness $\Theta$ of the effective essential hereditary creativity of $U$.

Suppose ${\sf W}_i$ is a \ce~theory in the language of $U$ and  $U' := U \cup {\sf W}_i$ is consistent.
Suppose further that ${\sf W}_{i \mf p} \cap {\sf W}_k= \emptyset$. We need that $\Theta(i,k) \not\in {\sf W}_{i \mf p} \cup {\sf W}_k$.

Let $s$ be an index of $U'$
and let  $j := \Psi_0 s$ and let $\tau := \Psi_1 s$. So, ${\sf W}_j$ is consistent and extends $U'$. Moreover, 
 $\tau$ witnesses that ${\sf W}_j$ interprets $V$.
 \begin{itemize}
 \item
Let  $\mc Z:=  \verz{\svb \mid {\sf W}_i+\idtb V\tau \vdash \svb^\tau}$. Let $p$ be an index of $\mc Z$.
We have $\mc Z \vdash \svb$ iff  ${\sf W_i} + \idtb V\tau \vdash \svb^\tau$. Moreover, since ${\sf W}_j$ is a consistent extension of ${\sf W}_i + \idtb V\tau$, we find that $Z$ is consistent with $V$. 
\item
Let $\mc X := \verz{\phi \mid  (\idtb V\tau \to \phi^\tau)\in {\sf W}_k}$. Let $q$ be an index of $\mc X$.
Suppose $\phi \in \mc Z_{\mf p} \cap \mc X$. Then, $(\idtb V\tau \to \phi^\tau)\in {\sf W}_{i \mf p}$ and
$(\idtb V\tau \to \phi^\tau)\in {\sf W}_k$. \emph{Quod non.} 
\end{itemize}

We may conclude that $\chi := \Phi(p,q) \not \in \mc Z_{\mf p} \cup \mc X$. We find
$(\idtb V\tau \to \chi^\tau) \not\in {\sf W}_{i\mf p} \cup {\sf W}_k$. We found $p$ and $q$ effectively from
$i$ and $j$. So, we can set $\Theta(i,j) := (\idtb V\tau \to \Phi(p,q)^\tau)$.
\end{proof}

In~\cite{Vis22}, the notion of $\Sigma^0_1$-friendliness was developed. Inspecting the proof of \cite[Theorem 35]{Vis22}
and what is said directly below the proof, we see that if $U$ is $\Sigma^0_1$-friendly, then $U \jump_{\sf eff} {\sf R}$.
This provides a nice source of examples of theories that are effectively essentially hereditarily creative. Specifically, the
theory ${\sf PA}^-_{\sf scatt}$ studied in~\cite{Vis22} is effectively essentially hereditarily creative.

In Appendix A of~\cite{Vis22} it is shown that we can extend essential tolerance by considering theory extensions that
allow addition of finitely many constants. This notion yields earlier results by Vaught in his~\cite{Vau62}.
We did not pursue this avenue yet, but, \emph{prima facie}, there seem to be no obstacles to extend the results of this
appendix to this wider notion. 

\end{document}